\date{20 October 2014}
\title[BT Operators]{Local Beilinson-Tate Operators}
\author{Amnon Yekutieli}
\address{Department of  Mathematics, Ben Gurion University,
Be'er Sheva 84105,
Israel}
\email{amyekut@math.bgu.ac.il}
\newtheorem{thm}[equation]{Theorem}
\newtheorem{cor}[equation]{Corollary}
\newtheorem{prop}[equation]{Proposition}
\newtheorem{lem}[equation]{Lemma}
\theoremstyle{definition}
\newtheorem{dfn}[equation]{Definition}
\newtheorem{rem}[equation]{Remark}
\newtheorem{exa}[equation]{Example}
\newtheorem{exer}[equation]{Exercise}
\newtheorem{que}[equation]{Question}
\newtheorem{conj}[equation]{Conjecture}
\numberwithin{equation}{section}
\newcommand{\iso}{\xrightarrow{\simeq}}
\newcommand{\inj}{\hookrightarrow}
\newcommand{\surj}{\twoheadrightarrow}
\newcommand{\xar}{\xrightarrow}
\newcommand{\opn}{\operatorname}
\newcommand{\cat}[1]{\operatorname{\mathsf{#1}}}
\newcommand{\cd}{\,{\cdot}\,}
\newcommand{\ol}{\overline}
\newcommand{\rmitem}[1]{\item[\text{\textup{(#1)}}]}
\newcommand{\mfrak}[1]{\mathfrak{#1}}
\newcommand{\mcal}[1]{\mathcal{#1}}
\newcommand{\mrm}[1]{\mathrm{#1}}
\newcommand{\mbb}[1]{\mathbb{#1}}
\newcommand{\OO}{\mcal{O}}
\newcommand{\MM}{\mcal{M}}
\newcommand{\DD}{\mcal{D}}
\newcommand{\si}{\sigma}
\renewcommand{\th}{\theta}
\newcommand{\al}{\alpha}
\newcommand{\be}{\beta}
\newcommand{\ga}{\gamma}
\newcommand{\ep}{\epsilon}
\newcommand{\Om}{\Omega}
\newcommand{\m}{\mfrak{m}}
\newcommand{\n}{\mfrak{n}}
\renewcommand{\r}{\mfrak{r}}
\newcommand{\kk}{\bsym{k}}
\renewcommand{\k}{\Bbbk}
\newcommand{\Q}{\mathbb{Q}}
\newcommand{\Z}{\mathbb{Z}}
\newcommand{\N}{\mathbb{N}}
\newcommand{\F}{\mathbb{F}}
\newcommand{\tup}[1]{\textup{#1}}
\newcommand{\bsym}[1]{\boldsymbol{#1}}
\newcommand{\ot}{\otimes}
\newcommand{\til}[1]{\tilde{#1}}
\newcommand{\what}[1]{\widehat{#1}}
\newcommand{\bra}[1]{\langle #1 \rangle}
\renewcommand{\d}{\mathrm{d}}
\newcommand{\pa}{\partial}
\newcommand{\lb}{\linebreak}
\newcommand{\bmat}[1]{\begin{bmatrix} #1 \end{bmatrix}}
\begin{document}

\begin{abstract}
In 1968 Tate introduced a new approach to residues on algebraic curves, based 
on a certain ring of operators that acts on the completion at a point of the 
function field of the curve. This approach was generalized to higher 
dimensional algebraic varieties by Beilinson in 1980. However Beilinson's 
paper had very few details, and his operator-theoretic construction remained 
cryptic for many years. Currently there is a renewed interest in the 
Beilinson-Tate approach to residues in higher dimensions. 

Our paper presents a variant of Beilinson's 
operator-theoretic construction. We consider an $n$-dimensional topological 
local field $K$, and define a ring of operators $\opn{E}(K)$ that acts on $K$,
which we call the ring of {\em local Beilinson-Tate operators}.
Our definition is of an analytic nature (as opposed to the original geometric 
definition of Beilinson). We study various properties of the ring 
$\opn{E}(K)$. In particular we show that $\opn{E}(K)$ has an {\em 
$n$-dimensional cubical decomposition}, and this gives rise to a {\em residue 
functional} in the style of Beilinson-Tate. Presumably this residue 
functional coincides with the residue functional that we had constructed 
in 1992; but we leave this as a conjecture. 
\end{abstract}

\maketitle

\tableofcontents

\setcounter{section}{-1}
\section{Introduction}
Let $X$ be a smooth curve over a perfect base field $\k$, with function field  
$\kk(X)$, and let $x \in X$ be a closed point. The completion 
$K := \kk(X)_{(x)}$ of $\kk(X)$ at $x$ is a local field. 
In \cite{Ta}, Tate introduced a ring 
$\opn{E}(K) \subset \opn{End}_{\k}(K)$,
and two-sided ideals 
$\opn{E}(K)_1, \opn{E}(K)_2 \subset \opn{E}(K)$.
These new objects were defined using the valuation ring of $K$.
Let us call the elements of $\opn{E}(K)$ {\em local Tate operators}. 
Heuristically, elements of $\opn{E}(K)_1$ are ``compact operators'', and 
elements of $\opn{E}(K)_2$ are ``discrete operators''.
An operator $\phi \in \opn{End}_{\k}(K)$ is called {\em finite potent} if for 
some positive integer $m$ the operator $\phi^m$ has finite rank. 
Tate proved that each $\phi \in \opn{E}(K)_1 \cap \opn{E}(K)_2$ is 
finite potent, and that 
$\opn{E}(K)_1 + \opn{E}(K)_2 = \opn{E}(K)$.
Using some algebraic manipulations of the structure 
$\bigl( \opn{E}(K), \{ \opn{E}(K)_j \} \bigr)$, 
Tate constructed a {\em residue functional}
\[ \opn{Res}^{\mrm{T}}_{\kk(X) / \k, x} : \Om^1_{\kk(X) / \k} \to  \k . \]
Here $\Om^1_{\kk(X) / \k}$ is the module of K\"ahler $1$-forms of $\kk(X)$. 
Then he showed that his residue functional is the same as the one gotten by 
Laurent series expansion at $x$. 

Finally, Tate gave a global variant of this residue 
functional, using the adeles of $X$ instead of the completion 
$\kk(X)_{(x)}$. He related the local and global residues, and proved that 
when the curve $X$ is proper, the sum of the local residues of any form
$\al \in \Om^1_{\kk(X) / \k}$ is zero.  
The Tate construction gave a totally new way of looking at residues and 
duality for curves. 

This circle of ideas was extended by Beilinson to higher dimensions 
in his extremely brief paper \cite{Be1} (that did not contain any proofs). 
Actually Beilinson's paper had in it several important innovations,
related to a finite type $\k$-scheme $X$. 
By a chain of points in $X$ of length $n$ we mean a sequence 
$\xi = (x_0, \ldots, x_n)$
of points such that $x_{i}$ is a specialization of $x_{i - 1}$.
The chain $\xi$ is {\em saturated} if each  $x_{i}$ is an immediate 
specialization of $x_{i - 1}$. Beilinson said that:
\begin{enumerate}
\item For a chain $\xi$ of length $n$, and a quasi-coherent sheaf $\MM$, 
there is an $\OO_X$-module $\MM_{\xi}$, gotten by an $n$-fold zig-zag inverse 
and direct limit process. When $\MM$ is coherent and $n = 0$ this is the 
$\m_{x_0}$-adic completion $\what{\MM}_{x_0}$ of the stalk $\MM_{x_0}$. 
(Let us call $\MM_{\xi}$ the {\em Beilinson completion} of $\MM$ along $\xi$.)

\item For every $n \in \N$ and quasi-coherent sheaf $\MM$, 
there is a sheaf $\mbb{A}^n(\MM)$ called the {\em sheaf of degree $n$ adeles 
with values in $\MM$}.
It is a restricted product of the Beilinson completions $\MM_{\xi}$ along 
length $n$ chains. The sheaves $\mbb{A}^n(\MM)$ assemble into a flasque 
resolution of $\MM$. When $X$ is a curve, $\mbb{A}^1(\OO_X)$ is the usual sheaf 
of adeles of $X$.

\item For a saturated chain $\xi = (x_0, \ldots, x_n)$, the completion 
$\kk(x_0)_{\xi}$ of the residue field $\kk(x_0)$ is a finite product of 
$n$-dimensional local fields. 

\item Let $A := \kk(x_0)_{\xi}$ as in (3). Then there is a ring 
$\mrm{E}(A) \subset \opn{End}_{\k}(A)$, with an {\em $n$-dimensional cubical 
decomposition} (see Definition \ref{dfn:85} below), from which a 
Tate-style residue functional can be obtained. 

\item The higher adeles from (2) and the cubically
decomposed ring of operators $\mrm{E}(A)$ from (4)
can be combined to prove a global residue theorem when $X$ is proper. 
\end{enumerate}

The adelic resolution (2) was clarified, and all claims proved (for any 
noetherian scheme $X$), by Huber \cite{Hu}. 
The assertion about higher local fields (3) was proved in \cite{Ye1} (for any
excellent noetherian scheme $X$); see Theorem \ref{thm:50}. 

For a long time assertions (4) and (5) were essentially neglected and 
remained cryptic. Very recently we heard about renewed interest in the work of 
Beilinson, mainly by Braunling, Groechenig and Wolfson \cite{Br1, Br2, BGW}. 
The papers \cite{Br1, Br2} discuss item (4) above. A long term goal of this 
team is to understand and make precise the global aspect (5) of Beilinson's 
construction, and then to apply this construction in various directions. 
Independently, Osipov \cite{Os1, Os2} has been studying higher adeles and 
higher local fields. 

Discussions with Wolfson and Braunling led us to the realization that the 
topological aspects of higher local fields, and their implications on item (4) 
above, are not sufficiently understood. 
{\em The purpose of this paper is to present an analytic variant of the 
Beilinson-Tate construction for topological local fields, and to study its 
properties.} Presumably our analytic construction agrees with the 
geometric construction of \cite{Be1, Br2}, and the resulting residue functional 
is the same as residue functional from \cite{Ye1} -- see Conjectures 
\ref{conj:200} and  \ref{conj:204} below. 

Throughout the Introduction we keep the assumption that $\k$ is a perfect base
field. An {\em $n$-dimensional topological local field} 
over $\k$ is -- roughly speaking -- 
a field extension $K$ of $\k$, with a rank $n$ valuation, and with a 
topology compatible with the valuation. An example is the field of iterated 
Laurent series $K = \k((t_2))((t_1))$, which is of dimension $2$. See 
Definitions \ref{dfn:10} and \ref{dfn:12} for details.
It is important to mention that a topological local field $K$ of dimension 
$n \geq 2$ is not a topological ring, but only a {\em semi-topological ring}: 
multiplication is continuous only in one argument. 
We abbreviate ``topological local field'' to ``TLF'' and ``semi-topological'' 
to ``ST''. The theory of ST rings and modules is 
reviewed in Section \ref{sec:ST}.

A TLF $K$ of dimension $n$ has discrete valuation rings $\OO_i(K)$ and residue 
fields $\kk_i(K)$, for $i = 1, \ldots, n$. They are related as follows:
$\kk_i(K)$ is the residue field of $\OO_i(K)$ and the fraction field of 
$\OO_{i + 1}(K)$; and $K$ is the fraction field of $\OO_{1}(K)$.
By a {\em system of liftings} for $K$ we mean a sequence 
$\bsym{\si} = (\si_1, \ldots, \si_n)$, where each 
$\si_i : \kk_i(K) \to \OO_{i}(K)$ is a continuous lifting of the canonical 
surjection $ \OO_{i}(K) \surj \kk_i(K)$. Such systems of liftings always exist; 
see Proposition \ref{prop:150}.

Consider a TLF $K$ equipped with a system of liftings $\bsym{\si}$.
We define a ring of operators 
$\mrm{E}_{\bsym{\si}}(K) \subset \opn{End}_{\k}(K)$, and ideals 
$\mrm{E}_{\bsym{\si}}(K)_{i, j} \subset \mrm{E}_{\bsym{\si}}(K)$.
Our definition (Definitions \ref{dfn:5} and \ref{dfn:73} in the body of the 
paper) is a modification of Beilinson's original definition from \cite{Be1}. 
But whereas Beilinson's original definition was  geometric in nature (and 
pertained only to a TLF arising as a factor of a completion 
$\kk(x_0)_{\xi}$), our definition is of an analytic nature.
(We saw a similar definition in a private communication from Braunling.) 

Here is our first main result. It is repeated as Corollary \ref{cor:100}.

\begin{thm} \label{thm:85}
Let $K$ be an $n$-dimensional TLF over $\k$, and let 
$\bsym{\si}$ and $\bsym{\si}'$ be two systems of liftings for $K$. 
\begin{enumerate}
\item There is equality 
$\mrm{E}_{\bsym{\si}}(K) = \mrm{E}_{\bsym{\si}'}(K)$
of these subrings of $\opn{End}_{\k}(K)$.
\item For any $i = 1, \ldots, n$ and $j = 1, 2$ there is equality 
$\mrm{E}_{\bsym{\si}}(K)_{i, j} = \mrm{E}_{\bsym{\si}'}(K)_{i, j}$
of these ideals of $\mrm{E}_{\bsym{\si}}(K)$.
\end{enumerate}
\end{thm}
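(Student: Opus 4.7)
The plan is to reduce the claim to the case where $\bsym{\si}$ and $\bsym{\si}'$ differ at a single index, and then to show that the two resulting ``lattice'' filtrations on $K$ are tightly sandwiched in one another. First I would observe that any two systems of liftings $\bsym{\si}$ and $\bsym{\si}'$ can be joined by a finite chain of intermediate systems, each obtained from the previous by altering only one of the maps $\si_i$. By transitivity of the desired equalities in (1) and (2), it therefore suffices to assume $\si_i = \si_i'$ for all $i \ne i_0$, with $\si_{i_0}$ and $\si_{i_0}'$ two arbitrary continuous liftings of $\OO_{i_0}(K) \surj \kk_{i_0}(K)$. The key quantitative input is then that $\tau := \si_{i_0}' - \si_{i_0}$ is a continuous $\k$-linear map from $\kk_{i_0}(K)$ into the maximal ideal $\mfrak{m}_{i_0}(K) \subset \OO_{i_0}(K)$; i.e.\ $\tau$ is ``small'' with respect to the discrete valuation filtration at level $i_0$.

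Next I would compare the defining conditions of the two rings directly. The subring $\mathrm{E}_{\bsym{\si}}(K) \subset \opn{End}_{\k}(K)$ and the ideals $\mathrm{E}_{\bsym{\si}}(K)_{i,j}$ are defined (Definitions \ref{dfn:5} and \ref{dfn:73}) by imposing boundedness and/or discreteness of $\phi \in \opn{End}_{\k}(K)$ with respect to a tower of ST decompositions of $K$ built iteratively out of $\bsym{\si}$, i.e.\ with respect to a family of ``$\bsym{\si}$-lattices''. The central step in the proof is to verify that each $\bsym{\si}$-lattice is sandwiched between two $\bsym{\si}'$-lattices, and symmetrically. Because $\tau$ takes values in $\mfrak{m}_{i_0}(K)$, replacing $\si_{i_0}$ by $\si_{i_0}'$ shifts the lattice indexing at level $i_0$ by at most one step in the discrete filtration, and this shift is absorbed by the defining inequalities of both the ring and the ideals.

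The main obstacle will be controlling how a change of lifting at index $i_0$ propagates to levels $j > i_0$, since the ST decomposition at level $j$ is built on top of all the earlier liftings $\si_1, \ldots, \si_{j-1}$. I would handle this by descending induction on $i_0$: the base case $i_0 = n$ is essentially one-dimensional and reduces to a single application of the ``small $\tau$'' estimate above, while for general $i_0$ the inductive step recognises the propagated effect at levels $j > i_0$ as a change of lifting inside the inductively smaller TLF structure carried by $\OO_{i_0}(K) / \mfrak{m}_{i_0}(K) = \kk_{i_0}(K)$, to which the hypothesis applies. Once the uniform sandwiching of lattices is established, assertion (1) follows from a direct termwise comparison of the ring condition for $\bsym{\si}$ and for $\bsym{\si}'$, and assertion (2) follows from the same sandwiching applied to the further filtered/graded conditions cutting out each two-sided ideal $\mathrm{E}_{\bsym{\si}}(K)_{i,j}$.
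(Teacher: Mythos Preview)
Your proposal has a real gap, rooted in a misreading of where the liftings enter Definitions \ref{dfn:5} and \ref{dfn:73}. At the top level the $\OO_1(K)$-lattices in $K$ (and in any finite $K$-module) do \emph{not} depend on $\bsym{\si}$ at all, and neither does condition (2.i). There are no ``$\bsym{\si}$-lattices'' to sandwich inside $K$. The dependence on $\si_1$ appears only in condition (2.ii): given a $\phi$-refinement $(L'_1,L'_2)\prec_\phi(L_1,L_2)$, one must check that the induced $\bar{\phi}$ is a BT operator between the $\kk_1(K)$-modules $\opn{rest}_{\si_1}(L_1/L'_1)$ and $\opn{rest}_{\si_1}(L'_2/L_2)$. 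So the question is not about comparing two families of lattices in $K$; it is about comparing two $\kk_1(K)$-module structures on the same finite-length $\OO_1(K)$-module $\bar M=L/L'$, and showing that ``$\bar{\phi}$ is BT for the $\si_1$-structure'' is equivalent to ``$\bar{\phi}$ is BT for the $\si'_1$-structure''.

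Your observation that $\tau=\si'_1-\si_1$ lands in $\m_1(K)$ is correct but far from sufficient. What one actually needs is that the identity map of $\bar M$, read through $\kk_1(K)$-bases coming from $\si_1$ and $\si'_1$, lies in $\opn{GL}_r\bigl(\mrm{E}_{\d_1(\bsym{\si})}(\kk_1(K))\bigr)$; then pre- and post-composition with these invertible BT matrices transports the BT property of $\bar{\phi}$ from one structure to the other (via Lemmas \ref{lem:52} and \ref{lem:66}). A naive ``shift by one step'' estimate does not yield this: being a BT operator is a recursive condition, and a bound on lattices at level $2$ says nothing about the induced maps on the next round of quotients. This is exactly why the paper proves the independent technical result Theorem \ref{thm:20} (repeated as Theorem \ref{thm:90}): for a precise artinian local ring such as $\OO_1(K)/\m_1(K)^{l+1}$ (Lemma \ref{lem:25}), the change-of-basis matrix between any two continuous liftings lies in $\opn{GL}_r(\DD^{\mrm{cont}}_{\kk_1(K)/\k})$. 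Combined with Lemma \ref{lem:67} ($\DD^{\mrm{cont}}\subset\mrm{E}$), this gives precisely the invertible BT matrix needed, and then induction on $n$ (Theorem \ref{thm:65}) finishes. Your outline supplies neither this differential-operator input nor any substitute for it.
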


The theorem justifies the next definition. 

\begin{dfn} \label{dfn:150}
Let $K$ be an $n$-dimensional TLF over $\k$. Define 
$\mrm{E}(K) := \mrm{E}_{\bsym{\si}}(K)$ and 
$\mrm{E}(K)_{i, j} := \mrm{E}_{\bsym{\si}}(K)_{i, j}$,
where $\bsym{\si}$ is any system of liftings for $K$. 
We call $\mrm{E}(K)$ the ring of {\em local Beilinson-Tate operators} on $K$.
\end{dfn}

Here is a definition from \cite{Br2}, that distills Beilinson's definition from 
\cite{Be1}. The notation we use is closer to the original notation of 
Tate. 

\begin{dfn} \label{dfn:85}
Let $A$ be a commutative $\k$-ring. An {\em $n$-dimensional cubically decomposed 
ring of operators on $A$} is data 
$\bigl( E, \{ E_{i, j} \} \bigr)$ consisting of:
\begin{itemize}
\item A $\k$-subring $E \subset \opn{End}_{\k}(A)$ containing $A$. 

\item Two-sided ideals $E_{i, j} \subset E$, indexed by 
$i \in \{ 1, \ldots, n \}$ and $j \in \{ 1, 2 \}$. 
\end{itemize}
These are the conditions: 
\begin{enumerate}
\rmitem{i} For every $i \in \{ 1, \ldots, n \}$ we have 
$E = E_{i, 1} + E_{i, 2}$.
\rmitem{ii} Every operator 
$\phi \in \bigcap_{i = 1}^n \, \bigcap_{j = 1}^2 \, E_{i, j}$
is finite potent.
\end{enumerate}
\end{dfn}

The next result is Theorem \ref{thm:103}(1) in the body of the paper. 

\begin{thm} \label{thm:86}
Let $K$ be an $n$-dimensional TLF over $\k$. 
The data 
\[ \bigl( \mrm{E}(K), \{ \mrm{E}(K)_{i, j} \} \bigr) \]
of local Beilinson-Tate operators is an $n$-dimensional cubically decomposed 
ring of operators on $K$.
\end{thm}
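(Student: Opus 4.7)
The plan is to verify the two conditions of Definition \ref{dfn:85} separately. By Theorem \ref{thm:85}, the ring $\mrm{E}(K)$ and the ideals $\mrm{E}(K)_{i,j}$ can be computed using any fixed system of liftings $\bsym{\si} = (\si_1, \ldots, \si_n)$, so I would work with such a choice throughout; the inclusion $K \subset \mrm{E}(K)$ via multiplication operators should be built into Definition \ref{dfn:5}, leaving only the axioms (i) and (ii) to check.

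For condition (i), I would fix $i \in \{1, \ldots, n\}$ and construct a canonical projection $\pi_i \in \mrm{E}(K)$ induced by the decomposition of $K$ furnished by the valuation ring $\OO_i(K)$ and the lifting $\si_i$. Concretely, picking a uniformizer $t_i$ for $\OO_i(K)$ and using $\si_i$ to lift a Laurent-expansion-type splitting yields a decomposition $K \cong \OO_i(K) \oplus V_i$, with $V_i$ the negative-power tail; let $\pi_i \in \opn{End}_{\k}(K)$ be the projection onto $\OO_i(K)$ along $V_i$. Unwinding Definitions \ref{dfn:5} and \ref{dfn:73}, I would verify that $\pi_i$ lies in one of the ideals $\mrm{E}(K)_{i, j}$ and $1 - \pi_i$ in the other. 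Since the $\mrm{E}(K)_{i, j}$ are two-sided ideals, for every $\phi \in \mrm{E}(K)$ the identity $\phi = \phi \pi_i + \phi (1 - \pi_i)$ then exhibits the required decomposition.

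For condition (ii), I would proceed by induction on the dimension $n$. The base case $n = 1$ is essentially Tate's original theorem: an operator in $\mrm{E}(K)_{1,1} \cap \mrm{E}(K)_{1,2}$ should have image in a finite-dimensional $\k$-subspace of $K$, hence is automatically finite potent. For $n \geq 2$, the residue field $\kk_1(K)$ is itself an $(n-1)$-dimensional TLF, and $(\si_2, \ldots, \si_n)$ descends to a system of liftings on it. Given $\phi \in \bigcap_{i, j} \mrm{E}(K)_{i, j}$, the $i = 1$ conditions bound the vertical support and co-support of $\phi$ relative to $\OO_1(K)$, allowing me to associate to $\phi$ an induced operator $\bar\phi$ on a $\k$-subquotient of $K$ identified with a finite sum of shifted copies of $\kk_1(K)$. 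A compatibility check shows that $\bar\phi$ inherits membership in the analogous $(n-1)$-dimensional intersection on $\kk_1(K)$, so by the inductive hypothesis some power of $\bar\phi$ has finite rank. Combined with the vertical bounds, a sufficiently large power of $\phi$ has finite-dimensional image, proving finite potence.

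The main obstacle I foresee lies in the inductive step of (ii): extracting a well-defined induced operator on $\kk_1(K)$ from an endomorphism of the semi-topological $\k$-module $K$, and tracking how the ideals $\mrm{E}(K)_{i, j}$ for $i \geq 2$ translate under this reduction. Because a TLF of dimension $\geq 2$ is only a semi-topological ring, the argument has to respect the ST-admissible submodule structure rather than naive topological closures. The projections from (i), especially $\pi_1$, will be the essential tool, but the combinatorial bookkeeping needed to match the defining conditions of the ideals across the dimension drop is where I expect the bulk of the technical work to concentrate.
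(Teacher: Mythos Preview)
Your approach is essentially the paper's own: condition (i) is Lemma \ref{lem:65} and condition (ii) is Lemma \ref{lem:100}, both proved by the same induction on $n$ you outline. Two small imprecisions are worth flagging. First, for $i \geq 2$ the ring $\OO_i(K)$ lives in $\kk_{i-1}(K)$, not in $K$, so there is no literal decomposition $K \cong \OO_i(K) \oplus V_i$; what you need is to lift the splitting of $\kk_{i-1}(K)$ back up through $\si_{i-1}, \ldots, \si_1$, and the paper organizes this as an induction---assume $\bsym{1}_{\kk_1(K)} = \bar\phi_1 + \bar\phi_2$ with $\bar\phi_j \in \mrm{E}_{\d_1(\bsym{\si})}(\kk_1(K))_{i-1,j}$, then define $\phi_j$ on $K$ by applying $\bar\phi_j$ to each Laurent coefficient. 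Second, in the $n=1$ base case of (ii) the image of $\phi$ is contained in a lattice $L_2$, which is not finite over $\k$; it is $\phi^2$ that has finite rank, since $\phi^2(K) \subset \phi(L_2)$ and $\phi|_{L_2}$ factors through the finite $\k$-module $L_2/L_1$.
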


Let $A$ be any commutative ST $\k$-ring. We can talk about the ring 
$\opn{End}^{\mrm{cont}}_{\k}(A)$ of continuous $\k$-linear operators on $A$. 
There is also the ring $\DD^{\mrm{cont}}_{A / \k}$ of {\em continuous 
differential operators}; see the review in Section \ref{sec:CDOs}.
There are inclusions of $\k$-rings 
\[ A \subset \DD^{\mrm{cont}}_{A / \k} \subset \opn{End}^{\mrm{cont}}_{\k}(A)
\subset \opn{End}_{\k}(A) . \]

\begin{thm} \label{thm:87}
Let $K$ be an $n$-dimensional TLF over $\k$. 
The ring $\mrm{E}(K)$ of local Beilinson-Tate operators satisfies
\[ \DD^{\mrm{cont}}_{K / \k} \subset \mrm{E}(K) \subset
\opn{End}^{\mrm{cont}}_{\k}(K) . \]
\end{thm}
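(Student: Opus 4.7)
The plan is to prove the two inclusions separately. Start with the right-hand inclusion $\mrm{E}(K) \subset \opn{End}^{\mrm{cont}}_{\k}(K)$: by Theorem \ref{thm:86} and condition (i) of Definition \ref{dfn:85}, every $\phi \in \mrm{E}(K)$ can be written as $\phi = \phi_1 + \phi_2$ with $\phi_j \in \mrm{E}(K)_{n, j}$, so it suffices to show that each ideal $\mrm{E}_{\bsym{\si}}(K)_{i,j}$ consists of continuous operators. The definitions of these ideals (Definitions \ref{dfn:5} and \ref{dfn:73}) should be phrased in terms of how an operator moves the natural $\Z^n$-indexed filtration on $K$ induced by the valuations $\OO_i(K)$ and the liftings $\bsym{\si}$. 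Operators in $\mrm{E}_{\bsym{\si}}(K)_{i,1}$ uniformly push ``large'' pieces of the filtration into ``small'' ones, and operators in $\mrm{E}_{\bsym{\si}}(K)_{i,2}$ factor through a quotient which is discrete in the $i$-th direction; either condition forces the operator to be continuous with respect to the ST topology on $K$ reviewed in Section \ref{sec:ST}. Continuity then passes to the sum.

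For the left-hand inclusion $\DD^{\mrm{cont}}_{K/\k} \subset \mrm{E}(K)$, recall that $\mrm{E}(K)$ is a $\k$-subring of $\opn{End}_{\k}(K)$ that contains $K$ acting by multiplication, while $\DD^{\mrm{cont}}_{K/\k}$ is generated as a $\k$-ring by $K$ and the continuous $\k$-derivations $\opn{Der}^{\mrm{cont}}_{\k}(K, K)$. Hence by an induction on the order of the differential operator it is enough to show that every continuous $\k$-derivation $D$ lies in $\mrm{E}(K)$. Choose a system of regular parameters $t_1, \dots, t_n$ for $K$ and a compatible system of liftings $\bsym{\si}$ as in Proposition \ref{prop:150}; by the structure of continuous Kähler differentials for TLFs, any such $D$ is a $K$-linear combination of the partial derivatives $\pa / \pa t_i$, so it suffices to show $\pa / \pa t_i \in \mrm{E}_{\bsym{\si}}(K)$ for each $i$.

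For this last verification I would work with the $\bsym{\si}$-compatible monomial expansion of elements of $K$ as formal series $\sum c_{\b} \, t_1^{b_1} \cdots t_n^{b_n}$ with $c_{\b}$ in the coefficient field, and note that $\pa / \pa t_i$ shifts the $i$-th valuation by $-1$ and preserves all $j$-th valuations for $j \ne i$. Matching this against the lattice/filtration conditions in Definition \ref{dfn:5}, the operator $\pa / \pa t_i$ falls into $\mrm{E}_{\bsym{\si}}(K)_{j, 1}$ for every $j$ (or into whichever ideal the specific form of the definition dictates), and hence lies in $\mrm{E}_{\bsym{\si}}(K) = \mrm{E}(K)$ by Theorem \ref{thm:85}.

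The main obstacle is this last direct verification for $\pa / \pa t_i$, because it is the only step that interacts delicately with the precise form of Definitions \ref{dfn:5} and \ref{dfn:73}: one has to match the derivation's shift behavior in each coordinate against the cubical bookkeeping in $\mrm{E}_{\bsym{\si}}(K)_{i,j}$. Once this combinatorial check is in place, closure of $\mrm{E}(K)$ under composition (it is a ring) and under $K$-linear combinations (since $K \subset \mrm{E}(K)$) takes care of all higher-order continuous differential operators, completing the inclusion $\DD^{\mrm{cont}}_{K/\k} \subset \mrm{E}(K)$.
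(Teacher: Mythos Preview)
Your approach has two genuine gaps.

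For the left inclusion, your reduction to derivations fails in positive characteristic: when $\opn{char}(\k) = p > 0$, the ring $\DD^{\mrm{cont}}_{K/\k}$ is \emph{not} generated by $K$ and the continuous derivations (see Example~\ref{exa:68}, which describes $\DD^{\mrm{cont}}_{K/\k}$ as the union of the matrix rings $\opn{End}_{\k((\bsym{t}^{p^m}))}(\k((\bsym{t})))$). The paper's proof (Lemma~\ref{lem:67}) instead works directly with an arbitrary $\phi \in \DD^{\mrm{cont}}_{K/\k}$ and verifies the two conditions of Definition~\ref{dfn:5} by induction on $n$: condition~(2.i), the $\m_1(K)$-adic continuity, is checked separately in the two characteristics (via formula~(\ref{eqn:60}) in characteristic $0$, and via linearity over the subfield $\k \cdot K^{p^d}$ in characteristic $p$); condition~(2.ii) follows because the induced operator $\bar{\phi}$ on any lattice quotient is again a differential operator, now over $\kk_1(K)$, so the inductive hypothesis applies. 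Incidentally, your specific claim that $\pa/\pa t_i \in \mrm{E}_{\bsym{\si}}(K)_{j,1}$ for all $j$ is false already for $j = 1$: the image $(\pa/\pa t_1)(K)$ is not contained in any $\OO_1(K)$-lattice.

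For the right inclusion, the cubical decomposition is a red herring. The ideals $\mrm{E}(K)_{i,j}$ are by definition subsets of $\mrm{E}(K)$, so proving that their elements are continuous is exactly the same problem as proving that arbitrary BT operators are continuous; writing $\phi = \phi_1 + \phi_2$ gains nothing. (There is also a logical dependency issue: as noted just after the statement of Theorem~\ref{thm:87}, this theorem is an input to the proofs of Theorems~\ref{thm:85} and~\ref{thm:86}, so invoking Theorem~\ref{thm:86} here is circular.) The paper's argument (Lemma~\ref{lem:44}) is a clean direct induction on $n$: by Lemma~\ref{lem:50}, continuity of $\phi$ reduces to continuity of each induced $\bar{\phi} : L_1/L'_1 \to L'_2/L_2$ for every $\phi$-refinement, and $\bar{\phi}$ is a BT operator in dimension $n-1$ by Definition~\ref{dfn:5}(2.ii), hence continuous by induction.
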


This is repeated as Theorem \ref{thm:103}(2). 
Actually Theorem \ref{thm:87} is used in the proofs of Theorems 
\ref{thm:85} and \ref{thm:86}.

In \cite{Ye1} we developed a theory of residues for TLFs.  For every 
$n$-dimensional TLF $K$ we consider the module of top degree separated 
differential forms $\Om^{n, \mrm{sep}}_{K / \k}$. It is a rank $1$ free 
$K$-module, and it has the fine $K$-module topology. This means that for any 
nonzero form $\al \in \Om^{n, \mrm{sep}}_{K / \k}$, the corresponding 
homomorphism $K \to \Om^{n, \mrm{sep}}_{K / \k}$ is a topological isomorphism. 
(We will say more about the fine topology later in the Introduction.)
The residue  functional
\begin{equation} \label{eqn:85}
\opn{Res}^{\mrm{TLF}}_{K / \k} : \Om^{n, \mrm{sep}}_{K / \k} \to \k 
\end{equation}
constructed in \cite{Ye1} is a continuous $\k$-linear homomorphism, enjoying 
several important properties. See Theorem \ref{thm:70} for details. 

Beilinson \cite{Be1} claimed that an $n$-dimensional cubically decomposed ring 
of operators $\bigl( E, \{ E_{i, j} \} \bigr)$ on 
a commutative $\k$-ring $A$ determines a residue functional
\begin{equation} \label{eqn:88}
\opn{Res}^{\mrm{BT}}_{A / \k; E} : \Om^{n}_{A / \k} \to \k .
\end{equation}
For $n = 1$ this is Tate's original abstract residue from \cite{Ta}. 
For $n \geq 2$ this was worked out in the recent paper \cite{Br2}, using Lie 
algebra homology and Hochschild homology. 

Now consider an $n$-dimensional TLF $K$ over $\k$. 
According to Theorem \ref{thm:86}, $K$ is equipped with an $n$-dimensional 
cubically decomposed ring of operators $\mrm{E}(K)$; and we let
\begin{equation} \label{eqn:103}
\opn{Res}^{\mrm{BT}}_{K / \k} : \Om^{n}_{K / \k} \to \k 
\end{equation}
denote the corresponding residue functional.

Let $A$ be any commutative ST $\k$-ring. For any $q$ the module of K\"ahler 
differentials $\Om^{q}_{A / \k}$ has a canonical topology (this is recalled at 
the end of Section \ref{sec:CDOs}). 
There is a canonical continuous surjection 
$\Om^{q}_{A / \k} \surj \Om^{q, \mrm{sep}}_{A / \k}$
to the separated module of differentials.
Often (e.g.\ when $A = K$ is a TLF of dimension $\geq 1$ and 
$\opn{char}(\k) = 0$) the kernel of 
this canonical surjection is very big. 

\begin{conj} \label{conj:200}
Let $K$ be an $n$-dimensional TLF over $\k$. The following diagram of 
$\k$-linear homomorphisms is commutative:
\[ \UseTips \xymatrix @C=8ex @R=6ex {
\Om^n_{K / \k} 
\ar@{->>}[r]^{\text{ can }}
\ar[dr]_{\opn{Res}^{\mrm{BT}}_{K / \k}}
&
\Om^{n, \mrm{sep}}_{K / \k}
\ar[d]^{\opn{Res}^{\mrm{TLF}}_{K / \k}}
\\
&
\k
} \]
\end{conj}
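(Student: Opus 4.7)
The plan is to verify Conjecture \ref{conj:200} by an explicit calculation on a spanning set of forms, combined with induction on the dimension $n$. Since both $\opn{Res}^{\mrm{BT}}_{K / \k}$ and the composite $\opn{Res}^{\mrm{TLF}}_{K / \k} \circ \opn{can}$ are $\k$-linear functionals on $\Om^n_{K / \k}$, equality is decided on any $\k$-spanning family. Moreover, once equality is established on such a family, the factorization of $\opn{Res}^{\mrm{BT}}_{K / \k}$ through $\Om^{n, \mrm{sep}}_{K / \k}$ is automatic, because the functional on the right-hand side annihilates $\ker(\opn{can})$ by construction.

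To produce the spanning family I would use the system of liftings $\bsym{\si}$ furnished by Proposition \ref{prop:150} to realize $K$ as an iterated Laurent series field $F((t_n)) \cdots ((t_1))$ with $F = \kk_n(K)$. By Theorem \ref{thm:70} the TLF residue of a monomial form $t_1^{a_1} \cdots t_n^{a_n} \cdot \mrm{d} t_1 \wedge \cdots \wedge \mrm{d} t_n$ reduces to $\opn{Res}^{\mrm{TLF}}_{F / \k}$ applied to its $(t_1 \cdots t_n)^{-1}$-coefficient. The target is then to unwind the BT residue of the same monomial form through the cubical decomposition of $\mrm{E}(K)$ supplied by Theorem \ref{thm:86}, and to show that the resulting alternating trace yields exactly this coefficient.

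I would carry out the unwinding by induction on $n$. The base case $n = 1$ should follow from Tate's original theorem in \cite{Ta}, once one verifies that our analytic definition of $\mrm{E}(K)$ agrees with Tate's operator ring for a one-dimensional TLF; this in turn reduces to matching our continuity-based definitions of the ideals $\mrm{E}(K)_{1, j}$ with the $\OO_1(K)$-adic ones used by Tate. For the inductive step the split short exact sequence $0 \to \mfrak{m}_1(K) \to K \to \kk_1(K) \to 0$ of ST $\k$-modules should let one peel off the $i = 1$ direction of the cubical decomposition: the BT trace on $K$ should unwind into a one-variable residue along $t_1$ composed with the BT trace on the $(n-1)$-dimensional TLF $\kk_1(K)$, after which the inductive hypothesis, combined with Theorem \ref{thm:70}, closes the computation.

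The main obstacle is proving that this unwinding is actually compatible with the cubical decomposition. Concretely, one must show that the ideals $\mrm{E}(K)_{i, j}$ for $i \geq 2$ descend via $K \surj \kk_1(K)$ to the corresponding ideals $\mrm{E}(\kk_1(K))_{i - 1, j}$, and that the alternating-trace formula of \cite{Br2} intertwines correctly with this reduction. The analytic flavour of our definition, together with Theorem \ref{thm:87} sandwiching $\mrm{E}(K)$ between $\DD^{\mrm{cont}}_{K / \k}$ and $\opn{End}^{\mrm{cont}}_{\k}(K)$, makes this compatibility plausible -- the extraction of the $t_1$-coefficient is manifestly continuous and differential in $t_1$, hence preserves every class of operator involved -- but turning plausibility into a rigorous matching of cochains, while tracking all the finite potent traces that appear in the BT formula, is the real work, and is the reason Conjecture \ref{conj:200} is left open here.
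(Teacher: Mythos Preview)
The paper does not prove this statement: Conjecture~\ref{conj:200} is explicitly left open. The only unconditional content the paper supplies is the remark immediately following the conjecture that it holds for $n \leq 1$ (trivially for $n = 0$, and by Tate's original argument for $n = 1$), together with the conditional Proposition~\ref{prop:201}, which shows that Conjecture~\ref{conj:202} (continuity of $\opn{Res}^{\mrm{BT}}_{K/\k}$) and Conjecture~\ref{conj:204} (agreement of the analytic and geometric cubical decompositions) jointly imply Conjecture~\ref{conj:200}. So there is no ``paper's own proof'' to compare against.

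Your proposal is therefore not a proof but a proof \emph{strategy}, and you say as much in your final paragraph. The strategy you outline---induction on $n$, peeling off the $t_1$ variable, and reducing the BT trace on $K$ to a one-dimensional Tate residue composed with the BT trace on $\kk_1(K)$---is a natural direct attack, and it is genuinely different from the paper's indirect route via continuity plus the geometric comparison. Each approach has its own hard step: yours requires showing that the alternating-trace construction of \cite{Br2} is compatible with the passage from $K$ to $\kk_1(K)$ at the level of cochains, which is exactly the obstacle you name; the paper's route requires proving continuity of the BT residue (Conjecture~\ref{conj:202}), which is also open and for which no mechanism is suggested. Your honest assessment of where your argument stalls is accurate; there is no additional gap to point out beyond the one you have already identified.
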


When $n \leq 1$ we know the conjecture is true. For $n = 0$ it is trivial, and 
for $n = 1$ it is proved in Tate's original paper \cite{Ta}. 
In order to help proving this conjecture in higher dimensions we have included 
a review of the residue functional $\opn{Res}^{\mrm{TLF}}_{K / \k}$ and its 
properties. This is Section \ref{sec:resid} of the paper. We also state 
Conjecture \ref{conj:202}, which is closely related to Conjecture 
\ref{conj:200}.

Suppose $A$ is a finite product of $n$-dimensional TLFs over $\k$, say 
$A = \prod_{l = 1}^r K_l$. Let us define 
$\mrm{E}(A) := \prod_l \mrm{E}(K_l)$ 
and 
$\mrm{E}(A)_{i, j} := \prod_l \mrm{E}(K_l)_{i, j}$.
It is not hard to see that the data
\begin{equation} \label{eqn:100}
\bigl( \mrm{E}(A), \{ \mrm{E}(A)_{i, j} \} \bigr) 
\end{equation}
is an $n$-dimensional cubically decomposed ring of operators on $A$. 

Let $X$ be a finite type $\k$-scheme, let $\xi = (x_0, \ldots, x_n)$ be a 
saturated chain of points in $X$ such that $x_n$ is a closed point, and let 
$K := \kk(x_0)$. According to \cite{Ye1} (see Theorem \ref{thm:50}), the 
Beilinson completion $A := K_\xi$ is a finite product of $n$-dimensional TLFs 
over $\k$. Beilinson's construction from \cite{Be1}, worked out in detail in 
\cite{Br2}, gives rise to 
an $n$-dimensional cubically decomposed ring of operators 
\begin{equation} \label{eqn:101}
\bigl( \mrm{E}_{X, \xi}(K), \{ \mrm{E}_{X, \xi}(K)_{i, j} \} \bigr) \
\end{equation}
on $K_\xi$. (This is our notation.)
Note that by definition both $\mrm{E}_{X, \xi}(K)$ and 
$\mrm{E}(K_{\xi})$ are subrings of $\opn{End}_{\k}(K_{\xi})$.

\begin{conj} \label{conj:204}
Let $X$ be a finite type $\k$-scheme, let $\xi = (x_0, \ldots, x_n)$ be a 
saturated chain of points in $X$ such that $x_n$ is a closed point, and let 
$K := \kk(x_0)$. Then the $n$-dimensional cubically decomposed rings of 
operators $\mrm{E}(K_{\xi})$ and $\mrm{E}_{X, \xi}(K)$ are equal.
\end{conj}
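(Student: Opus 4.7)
The plan is to match the two constructions directionally, using the TLF decomposition of $K_\xi$ from \cite{Ye1} as the bridge between the geometric input (chain of points, sheaf-theoretic zig-zag) and the analytic input (valuation rings, systems of liftings). First I would fix a factor of the Beilinson completion $K_\xi$ and recall from Theorem \ref{thm:50} how the $i$-th valuation ring $\OO_i$ and residue field $\kk_i$ of that factor arise from the $i$-th step of the chain $\xi = (x_0, \ldots, x_n)$: the position in the chain indexed by $i$ controls precisely the $i$-th DVR in the tower. This step matters because the cubically decomposed structures on both sides are indexed by $\{1, \ldots, n\}$, and one must verify that the two indexings agree.

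Next I would unpack the definition of $\mathrm{E}_{X, \xi}(K)$ from \cite{Br2} and rephrase its ideals $\mathrm{E}_{X, \xi}(K)_{i, j}$ purely in terms of the $K_\xi$-module filtrations by fractional ideals $\pi_i^N \OO_i(K_\xi)$. Concretely, one expects that $\mathrm{E}_{X, \xi}(K)_{i, 1}$ consists of operators that, modulo a finite-rank correction, shift these filtrations only finitely in one direction, while $\mathrm{E}_{X, \xi}(K)_{i, 2}$ consists of operators that factor through a quotient of the form $K_\xi / \pi_i^N \OO_i(K_\xi)$ for some $N$. With this characterization in hand, the definition of $\mathrm{E}_{\bsym{\si}}(K_\xi)_{i, j}$ recalled in Definition \ref{dfn:73} should be essentially tautologically equivalent, because it is already framed in these terms. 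A useful reduction is to first prove the containment $\mathrm{E}_{X, \xi}(K) \subseteq \mathrm{E}(K_\xi)$ by checking that Beilinson's generators — multiplication by elements of $K_\xi$, lifting maps from residue fields, and the zig-zag completion and localization operators — each land in $\mathrm{E}(K_\xi)$ with the expected ideal membership. By Theorem \ref{thm:87} the ring $\mathrm{E}(K_\xi)$ contains all continuous differential operators and lies inside $\opn{End}^{\mrm{cont}}_{\k}(K_\xi)$, so the key point is that Beilinson's generators are continuous for the ST topology on $K_\xi$, which should follow from the compatibility of Beilinson completions with the semi-topological structure.

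The main obstacle will be that Beilinson's geometric construction interleaves the $n$ directions through an $n$-fold zig-zag of inverse and direct limits, while the analytic construction handles the $n$ directions independently through the tower of DVRs and a system of liftings. Verifying that the interleaved geometric limits really do decompose into one-direction-at-a-time data compatible with the TLF tower is exactly the kind of compatibility that \cite{Ye1} established at the level of fields, but one now needs to lift it to the level of operators while keeping track of the two-sided ideal decompositions on both sides. The reverse inclusion $\mathrm{E}(K_\xi) \subseteq \mathrm{E}_{X, \xi}(K)$ is likely the more delicate direction, since it requires exhibiting analytically-defined operators as geometrically-built ones; here the independence of $\mathrm{E}_{\bsym{\si}}(K_\xi)$ from $\bsym{\si}$ provided by Theorem \ref{thm:85} gives welcome flexibility, because one is free to choose liftings adapted to the geometry of $\xi$.
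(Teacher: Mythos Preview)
This statement is a \emph{conjecture} in the paper; the paper does not prove it. Section~\ref{sec:geom} is included only as background ``to help proving this conjecture'', and Remark~\ref{rem:200} explains its significance, but no argument is offered. So there is no paper proof to compare your proposal against, and what you have written should be read as a plan of attack on an open problem.

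As such a plan, it has a real gap. Your paragraph describing $\mrm{E}_{X,\xi}(K)_{i,j}$ as the operators that ``shift the filtration by $\pi_i^N \OO_i(K_\xi)$ finitely'' or ``factor through $K_\xi / \pi_i^N \OO_i(K_\xi)$'' is only correct for $i = 1$. For $i \ge 2$, both Definition~\ref{dfn:73} and Definition~\ref{dfn:3} are \emph{recursive}: membership in the $(i,j)$ piece is decided by whether the induced operator $\bar{\phi}$ on every lattice quotient lies in the $(i-1,j)$ piece for the $(n-1)$-dimensional residue situation. The $n$ indices are nested, not parallel, so the claim that the two definitions become ``essentially tautologically equivalent'' once rephrased in filtration language skips the entire content of the conjecture.

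The genuine obstacle is the one you gesture at in your last paragraph but do not address. On the geometric side (Definition~\ref{dfn:1}) the lattices are $\OO_{X,x_1}$-submodules of the \emph{uncompleted} module $M$, and one then completes the quotients along $\d_0(\xi)$; on the analytic side (Definition~\ref{dfn:5}) the lattices are $\OO_1(K)$-submodules of the \emph{completed} module, and the quotients are viewed as $\kk_1(K)$-modules via a chosen lifting $\si_1$. Making the recursion match therefore requires, at every step, (a) a cofinality statement between geometric lattices and analytic lattices after completion, and (b) an identification of $\opn{rest}_{\si_1}(L/L')$ with $(L/L')_{\d_0(\xi)}$ that respects the BT data on both sides. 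Neither is automatic. Your appeal to ``Beilinson's generators'' does not help: $\mrm{E}_{X,\xi}(K)$ is not defined by generators in \cite{Be1} or in Definition~\ref{dfn:1}, so there is no generator check to perform. Likewise Theorem~\ref{thm:87} gives no sandwich here, since $\mrm{E}_{X,\xi}(K)$ is not a priori known to contain $\DD^{\mrm{cont}}_{K_\xi/\k}$ or to sit inside $\opn{End}^{\mrm{cont}}_{\k}(K_\xi)$.
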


To help proving this conjecture we have included Section \ref{sec:geom}, 
in which we recall some facts from \cite{Ye1} about the Beilinson 
completions $\kk(x_0)_{\xi}$, and provide our interpretation of 
the geometric definition of $\mrm{E}_{X, \xi}(K)$.
In Remark \ref{rem:200} we explain the geometric significance of these 
conjectures. 

To finish the introduction we wish to discuss a technical result that is used 
in the proof of Theorem \ref{thm:85}. This result is of a very general nature, 
and could possibly find other applications. 

We work in the category $\cat{STRing}_{\mrm{c}} \k$ of commutative ST 
$\k$-rings. The morphisms are continuous $\k$-ring homomorphisms.
Let $A \in \cat{STRing}_{\mrm{c}} \k$. The {\em fine $A$-module topology} on 
an $A$-module $M$ is the finest topology that makes $M$ into a ST $A$-module. 
E.g.\ if $M \cong A^r$ for $r \in \N$,  then the product topology is the 
fine $A$-module topology on $M$. For more see Section \ref{sec:ST}.

Consider an artinian local ring $A$ in $\cat{STRing}_{\mrm{c}} \k$, with 
residue field $K$. Give $K$ the fine $A$-module topology relative to 
the canonical surjection $\pi : A \to K$; so $\pi$ becomes a homomorphism in 
$\cat{STRing}_{\mrm{c}} \k$.
Suppose $\si : K \to A$ is a homomorphism in $\cat{STRing}_{\mrm{c}} \k$
such that $\pi \circ \si$ is the identity of $K$. We call $\pi$ a lifting of 
$K$ in $\cat{STRing}_{\mrm{c}} \k$.
The lifting $\si$ is called a {\em precise lifting} if the topology on $A$
coincides with the fine $K$-module topology on it (via $\si$). 
The ring $A$ is called a {\em precise artinian local ring} if 
it admits some precise lifting. There are examples of artinian local rings in 
$\cat{STRing}_{\mrm{c}} \k$ that are not precise, like in Example 
\ref{exa:211}. However, the rings that we are interested in (such as 
quotients of $\OO_1(K)$ for a TLF $K$ -- see Lemma \ref{lem:25}, and Beilinson 
completions of artinian local rings -- see Example \ref{exa:210}) are precise.
The reader might wonder if all continuous liftings $\si : K \to A$ for a 
precise artinian local ring $A$ are precise. This is answered affirmatively in 
Corollary \ref{cor:90} below. It is a consequence of the following technical 
result. 

Given a lifting $\si : K \to A$ and an $A$-module $M$, we denote by 
$\opn{rest}_{\si}(M)$ the $K$-module whose underlying $\k$-module is $M$, and 
$K$ acts via $\si$. 

\begin{thm} \label{thm:90}
Let $A$ be a precise artinian local ring in $\cat{STRing}_{\mrm{c}} \k$, with 
residue field $K$. Put on $K$ the fine $A$-module topology. 
Let $\si_1, \si_2 : K \to A$ be liftings in $\cat{STRing}_{\mrm{c}} \k$
of the canonical surjection $\pi : A \to K$, and assume that $\si_2$ is a 
precise lifting. 

Let $M_1$ and $M_2$ be finite $A$-modules, and let $\phi : M_1 \to M_2$ be an 
$A$-linear homomorphism. For $l = 1, 2$ choose $K$-linear isomorphisms 
$\psi_l : K^{r_l} \iso \opn{rest}_{\si_l}(M_l)$.
Let 
$\bar{\phi} \in \opn{Mat}_{r_2 \times r_1} \bigl( \opn{End}_{\k}(K) \bigr)$
be the matrix representing the $\k$-linear homomorphism
\[ \psi_2^{-1} \circ \phi \circ \psi_1 : K^{r_1} \to K^{r_2} . \]
Then the following hold. 

\begin{enumerate}
\item The matrix $\bar{\phi}$ belongs to 
$\opn{Mat}_{r_2 \times r_1}(\DD^{\mrm{cont}}_{K / \k})$.

\item Assume that $M_1 = M_2$ and $\phi$ is the identity isomorphism. 
Write $r := r_1$. 
Then the matrix $\bar{\phi}$ belongs to 
$\opn{GL}_{r}(\DD^{\mrm{cont}}_{K / \k})$.
\end{enumerate}
\end{thm}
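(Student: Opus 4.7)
My plan is to reduce the theorem to a single \emph{change-of-lifting} claim, and then prove that claim by induction on the nilpotency degree of the maximal ideal $\m$ of $A$.

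For the reduction, since $\si_2$ is precise, $A$ is finite as a $K$-module via $\si_2$, hence so is $M_1$. Choose any $K$-linear isomorphism $\psi_1' : K^{r_1} \iso \opn{rest}_{\si_2}(M_1)$ and decompose
\[
\psi_2^{-1} \circ \phi \circ \psi_1 \;=\; \bigl( \psi_2^{-1} \circ \phi \circ \psi_1' \bigr) \circ \bigl( (\psi_1')^{-1} \circ \psi_1 \bigr) .
\]
The left factor is $K$-linear for the $\si_2$-structure on both sides (because $\phi$ is $A$-linear), so its matrix has entries in $K \subseteq \DD^{\mrm{cont}}_{K / \k}$. Both parts of the theorem thus reduce to the following \emph{Change-of-Lifting Claim}: for every finite $A$-module $M$ and any $K$-linear isomorphisms $\psi_l : K^r \iso \opn{rest}_{\si_l}(M)$ for $l = 1, 2$, the matrix of $\psi_2^{-1} \circ \psi_1 : K^r \to K^r$ lies in $\opn{GL}_r(\DD^{\mrm{cont}}_{K / \k})$. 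For part~(2), note that in the identity case the left factor above is in $\opn{GL}_{r}(K)$, so invertibility of the whole product follows from the claim.

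I would prove the Change-of-Lifting Claim by induction on the least $n \geq 1$ with $\m^n = 0$ in $A$. The base case $n = 1$ is trivial, as $A = K$ forces $\si_1 = \si_2 = \opn{id}$. For the inductive step, set $I := \m^{n-1}$ and $A' := A/I$. Since $\m I = 0$, the ideal $I$ is canonically a $K$-vector space, and $A'$ is still precise (using the uniqueness of the fine $K$-module topology on finite $K$-modules). For a finite $A$-module $M$, let $N := IM$ and $M' := M/N$. As $\m N \subseteq \m^n M = 0$, the two liftings induce the same $K$-action on $N$, and $M'$ is a finite $A'$-module to which the induction hypothesis applies. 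Splitting the sequence $0 \to N \to M \to M' \to 0$ by $K$-linear sections $s_l : M' \to M$ via $\si_l$, and combining with $K$-isomorphisms of $N$ and of $M'$ via $\bar\si_l$, builds concrete $\psi_l$ in block form. The matrix of $\psi_2^{-1} \circ \psi_1$ then becomes block upper triangular with diagonal blocks the identity on $N$ and the $M'$-change-of-lifting, which lies in $\opn{GL}(\DD^{\mrm{cont}}_{K / \k})$ by induction; the off-diagonal block is a matrix representation of the section-difference $\Delta := s_1 - s_2 : M' \to N$.

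The main obstacle is showing that this off-diagonal block has entries in $\DD^{\mrm{cont}}_{K / \k}$. The crucial input is that $\de := \si_1 - \si_2 : K \to \m$ satisfies the twisted Leibniz identity
\[
\de(xy) \;=\; \si_2(x)\,\de(y) + \si_2(y)\,\de(x) + \de(x)\,\de(y) ,
\]
arising because both $\si_l$ are ring homomorphisms. Expanding $\de$ in a $K$-basis of $\m$ via $\si_2$ yields continuous $\k$-linear components $\de^{(k)} : K \to K$, and a parallel computation of $\Delta(\bar\si_2(x) m')$ expresses each off-diagonal entry in terms of these components, the $K$-structure constants of $A$ (via $\si_2$), and the section $s_1$. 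In the base case $n = 2$ the cubic term vanishes and each $\de^{(k)}$ is a continuous $K$-derivation, hence a continuous differential operator of order $\leq 1$; for larger $n$, iterating through the layers of the $\m$-adic filtration produces continuous differential operators of increasing order. Matching these against the off-diagonal entries places the block in $\DD^{\mrm{cont}}_{K / \k}$, completing the inductive step and hence the theorem.
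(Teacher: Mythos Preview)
Your reduction to the Change-of-Lifting Claim is correct and clean, and the block-triangular shape coming from the filtration step is right. But the heart of the inductive step --- showing that the off-diagonal block $\chi^{-1}\,\Delta\,\psi'_1$ has entries in $\DD^{\mrm{cont}}_{K/\k}$ for $n>2$ --- is only asserted, not proved. You say a ``parallel computation of $\Delta(\bar\si_2(x)m')$'' handles it, but $\psi'_1$ is $K$-linear via $\bar\si_1$, not $\bar\si_2$, so what you actually need is $\Delta(\bar\si_1(x)m'_j)$; unpacking this forces you to re-express $\bar\si_1$-scalings as $\bar\si_2$-scalings inside $M'$ and then push through $s_1$, which is only $\si_1$-linear. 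That re-expression already invokes the inductive change-of-lifting on $M'$, and the subsequent interaction with $s_1$ and the $\de^{(k)}$ is a genuine computation you have not carried out. ``Iterating through the layers'' is not a proof here; for $n>2$ the cubic term in your twisted Leibniz identity does not vanish, so the $\de^{(k)}$ are not derivations, and you still owe an argument that the resulting expressions are \emph{finite} polynomial combinations of continuous differential operators.

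The paper avoids this induction entirely. It observes that each $\psi_l$, being $K$-linear between finite-length $A$-modules (with $A$ acting on $K^{r_l}$ through the augmentation $A\to K$), is automatically a differential operator over $A$ --- this is \cite[Proposition~1.4.4]{Ye1}, and one can also see it directly: the iterated commutator $[\si_2(a_k),[\ldots,[\si_2(a_1),\si_1]\ldots]]$ equals $\pm\,\de(a_1)\cdots\de(a_k)\cdot\si_1$, which lands in $\m^k$ and hence vanishes for $k=n$. Thus $\bar\phi=\psi_2^{-1}\circ\phi\circ\psi_1$ is a differential operator over $A$, hence over $K$; continuity is established separately using that $\si_2$ is precise (so $\psi_2$ is a homeomorphism). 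For part~(2) the paper picks a single basis of $M$ compatible with the $\m$-adic filtration and shows the resulting matrix is unipotent upper-triangular, hence invertible in $\opn{Mat}_r(\DD^{\mrm{cont}}_{K/\k})$. This is both shorter and avoids the bookkeeping your approach requires for the off-diagonal block.
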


This is repeated as Theorem \ref{thm:20} in the body of the paper. 
{}From it we deduce the next result, which is Corollary \ref{cor:21}.

\begin{cor} \label{cor:90}
Let $A$ be a precise artinian local ring in $\cat{STRing}_{\mrm{c}} \k$,
with residue field $K$. Give $K$ the fine $A$-module topology.
Then any lifting $\si : K \to A$ in $\cat{STRing}_{\mrm{c}} \k$ is a precise 
lifting.
\end{cor}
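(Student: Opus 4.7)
The plan is to deduce the corollary directly from Theorem \ref{thm:90}(2) by specializing to the module $M = A$ itself, and then reading off the topological consequences of the fact that the change-of-coordinates matrix has entries in the continuous differential operators.

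First I would set $\si_1 := \si$ (the given continuous lifting) and choose $\si_2$ to be some precise lifting, which exists by the assumption that $A$ is a precise artinian local ring. Take $M_1 = M_2 := A$, viewed as a finite $A$-module in each case, and let $\phi := \opn{id}_A$. Since $A$ is artinian with residue field $K$, its length as a $K$-vector space (via either lifting) is the same integer $r$, so I can choose $K$-linear isomorphisms $\psi_l : K^r \iso \opn{rest}_{\si_l}(A)$ for $l=1,2$. Theorem \ref{thm:90}(2) then asserts that the representing matrix $\bar\phi = \psi_2^{-1} \circ \psi_1$ lies in $\opn{GL}_r(\DD^{\mrm{cont}}_{K/\k})$.

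Next I would translate this algebraic statement into a topological one. Every entry of $\bar\phi$ is a continuous $\k$-linear operator on $K$, and the same is true for the entries of $\bar\phi^{-1}$. Hence $\bar\phi$ acts as a self-homeomorphism of $K^r$ equipped with the product topology. Since $\si_2$ is precise, the original topology on $A$ equals the fine $K$-module topology via $\si_2$; but on a free module of finite rank the fine $K$-module topology is the product topology (as recalled in the discussion preceding Theorem \ref{thm:90}), so $\psi_2 : K^r \to A$ is a topological isomorphism with respect to the original topology on $A$. From $\psi_1 = \psi_2 \circ \bar\phi$ it follows that $\psi_1$ is also a topological isomorphism from $K^r$ (product topology) to $A$ (original topology).

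Finally, this identification is exactly the statement that the original topology on $A$ agrees with the fine $K$-module topology transported through $\si_1$, which is precisely what it means for $\si_1$ to be a precise lifting.

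The only real obstacle here is conceptual rather than computational: one must be sure that the product topology on $K^r$ indeed coincides with the fine $K$-module topology (a fact the paper records) and that a matrix in $\opn{GL}_r(\DD^{\mrm{cont}}_{K/\k})$ really does act as a homeomorphism of $K^r$. Both points are immediate from the definitions once Theorem \ref{thm:90}(2) is in hand, so the corollary is essentially a direct consequence of that theorem.
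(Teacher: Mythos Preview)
Your proof is correct and follows essentially the same approach as the paper. The paper factors the argument through an intermediate Corollary \ref{cor:20} (which states that the fine $(K,\si_1)$-module topology on $M$ equals the fine $(K,\si_2)$-module topology), but your argument simply inlines that step: you apply Theorem \ref{thm:90}(2) with $M=A$ and then read off directly that $\bar\phi$ is a homeomorphism of $K^r$ and hence $\psi_1 = \psi_2 \circ \bar\phi$ is a topological isomorphism.
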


\medskip \noindent
{\bf Acknowledgments}. I wish to thank Jesse Wolfson and Oliver Braunling
for bringing their work to my attention and for many illuminating 
discussions. Thanks also to Matthew Morrow, John Tate and Alexander Beilinson 
for their input. Lastly, I want to thank the three anonymous referees for 
reading my paper carefully and suggesting many improvements.

\section{Semi-Topological Rings and Modules}
\label{sec:ST}

We begin with a general discussion of various categories of rings. The notation 
introduced here will make some of our more delicate definitions possible. 

Let $\cat{Ring}$ be the category of rings (not necessarily commutative). 
The morphisms are unit preserving ring homomorphisms. 
Inside it there is the full subcategory $\cat{Ring}_{\mrm{c}}$ of commutative 
rings.  

Now let us fix a nonzero commutative base ring $\k$. A ring homomorphism 
$f : \k \to A$ is called {\em central} if $f(\k)$ is contained in the 
center of $A$. In this case we call $A$ a {\em central $\k$-ring}. (A more 
common name for $A$ is an associative unital $\k$-algebra.)
The central $\k$-rings form a category $\cat{Ring} \k$, in which a morphism
is a ring homomorphism $A \to B$ that respects the given central 
homomorphisms $\k \to A$ and $\k \to B$. 
Inside $\cat{Ring} \k$ there is the full subcategory $\cat{Ring}_{\mrm{c}} \k$ 
of commutative $\k$-rings. Of course if $\k = \Z$ then 
$\cat{Ring} \Z = \cat{Ring}$. 

Let $A$ be a local ring in $\cat{Ring}_{\mrm{c}} \k$, with maximal ideal $\m$ 
and residue field $K = A / \m$. Recall that $A$ is called a complete local ring 
if the canonical homomorphism $A \to \lim_{\leftarrow i} A / \m^i$ is 
bijective. The canonical surjection $\pi : A \surj K$ makes $K$ into an object 
of $\cat{Ring}_{\mrm{c}} \k$.
A {\em lifting} of the canonical surjection $\pi : A \surj K$
in $\cat{Ring}_{\mrm{c}} \k$, or a {\em coefficient field} for $A$ in 
$\cat{Ring}_{\mrm{c}} \k$, is a homomorphism $\si : K \to A$ in 
$\cat{Ring}_{\mrm{c}} \k$, such that the composition $\pi \circ \si$ is the 
identity of $K$. 

The next result is part of the Cohen Structure Theorem. 
We will repeat its proof, because the proof itself will feature in some of our 
constructions. 

\begin{thm}[Cohen] \label{thm:40}
Assume $\k$ is a perfect field. Let $A$ be a complete local ring in 
$\cat{Ring}_{\mrm{c}} \k$, with residue field $K$. 
Then there exists a lifting $\si : K \to A$ in $\cat{Ring}_{\mrm{c}} \k$
of the canonical surjection $\pi : A \surj K$. Moreover, if $\k \to K$ is 
finite, then this lifting $\si$ is unique. 
\end{thm}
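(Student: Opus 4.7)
The plan is to combine Zorn's lemma with Hensel's lemma, using perfectness of $\k$ to ensure separability of the residue field extensions that will be encountered. Let $\m \subset A$ denote the maximal ideal, and consider the poset $\mcal{L}$ of subfields $L \subseteq A$ with $\k \subseteq L$ and $L \cap \m = \{0\}$, ordered by inclusion. Every chain has an upper bound by union, and $\k$ itself lies in $\mcal{L}$, so by Zorn there is a maximal element $L_0$. The restriction $\pi|_{L_0} : L_0 \to K$ is injective by construction, and the heart of the proof is to show that it is also surjective; its inverse will then be the desired lifting $\si$.

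To prove surjectivity, suppose $\bar{x} \in K \setminus \pi(L_0)$, and identify $L_0$ with $\pi(L_0) \subseteq K$. If $\bar{x}$ is transcendental over $L_0$, lift it to $x \in A$ using surjectivity of $\pi$; every nonzero element of $L_0[x]$ has nonzero $\pi$-image by transcendence, hence lies outside $\m$ and is a unit in $A$, so the subfield $L_0(x) \subseteq A$ strictly enlarges $L_0$ inside $\mcal{L}$, contradicting maximality. If $\bar{x}$ is algebraic over $L_0$, let $\bar{f} \in L_0[T]$ be its minimal polynomial; assuming $\bar{x}$ is separable so that $\bar{f}'(\bar{x}) \neq 0$, Hensel's lemma (which applies because $A$ is $\m$-adically complete) provides a unique $x \in A$ with $\bar{f}(x) = 0$ and $\pi(x) = \bar{x}$. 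Then $L_0[x] \cong L_0[T]/(\bar{f})$ is a strictly larger field in $\mcal{L}$, again contradicting maximality. For the uniqueness assertion when $\k \to K$ is finite, $\k$ perfect gives $K/\k$ separable, so by the primitive element theorem $K = \k(\alpha)$ with separable minimal polynomial $\bar{f} \in \k[T]$; any lifting $\si$ is determined by $\si(\alpha) \in A$, which must be a root of $\bar{f}$ reducing to $\alpha$, and Hensel's lemma says such a root is unique.

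The main obstacle is the separability hypothesis used in the algebraic case: perfectness of $\k$ does not automatically make $K/L_0$ separable for arbitrary intermediate $L_0$, as the tower $\F_p \subset \F_p(t^p) \subset \F_p(t)$ over the perfect field $\F_p$ demonstrates. The standard remedy is to enforce separability by hand before running the Zorn argument: pick a separating transcendence basis $\mcal{T}$ of $K/\k$, which exists because $\k$ perfect makes $K/\k$ a separable extension in the sense of MacLane; lift the elements of $\mcal{T}$ into $A$ by the transcendental construction above to obtain a subfield $L_1 \subseteq A$ with $\pi(L_1) = \k(\mcal{T})$; and then apply Zorn in the subposet $\{ L \in \mcal{L} : L_1 \subseteq L \}$. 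Since $K$ is now separable algebraic over $\pi(L_1)$, every $\bar{x}$ encountered in the algebraic case is automatically separable over any $L_0 \supseteq L_1$, and the Hensel step goes through unobstructed.
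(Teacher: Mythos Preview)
Your argument is complete in characteristic $0$, but the fix you propose for characteristic $p$ contains a genuine gap: over a perfect field $\k$, a separable extension $K/\k$ in MacLane's sense need \emph{not} be separably generated, so the step ``pick a separating transcendence basis $\mcal{T}$ of $K/\k$'' can fail outright. The standard counterexample is $\k = \F_p$ and $K = \bigcup_{n} \F_p(t^{1/p^n})$: this $K$ is perfect, hence $K/\F_p$ is separable, yet it has transcendence degree $1$ and no single transcendental $s \in K$ makes $K/\F_p(s)$ separable algebraic. Worse, the unrepaired Zorn argument really does break here: in $A = K[[y]]$ the subfield $L_0 := \F_p(t+y)$ lies in your poset $\mcal{L}$ and is \emph{maximal} there, because any strictly larger $L \in \mcal{L}$ would have to contain some $\beta$ with $\pi(\beta) = t^{1/p}$, forcing $\beta^p = t+y$; but a $p$-th power in $K[[y]]$ has only $y^{ip}$ terms, so $t+y$ is not a $p$-th power. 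Thus maximality of $L_0$ does not force $\pi(L_0) = K$.

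The paper circumvents this by replacing the separating transcendence basis with a \emph{differential basis}: a collection $\{b_x\}$ whose differentials $\{\d(b_x)\}$ form a $K$-basis of $\Om^1_{K/\k}$. Such a collection always exists, is automatically algebraically independent over a perfect $\k$, and---this is the key point---$K$ is \emph{formally \'etale}, not merely separable algebraic, over $\k(\{b_x\})$. Formal \'etaleness yields a unique extension $K \to A/\m^{i+1}$ of any chosen lift $\k(\{b_x\}) \to A$, and completeness assembles these into $\si : K \to A$. In the counterexample above $\Om^1_{K/\F_p} = 0$, so the differential basis is empty and $K/\F_p$ is itself formally \'etale; the lifting comes directly from that. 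Your uniqueness argument for finite $K/\k$ via the primitive element theorem and Hensel is correct and matches the paper's observation that the differential basis is empty in that case.
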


\begin{proof}
Consider the $K$-module $\Om^1_{K / \k}$ of K\"ahler differential forms. 
Choose a collection $\{ b_x \}_{x \in X}$ of elements of $K$, 
such that the collection of forms $\{ \d(b_x) \}_{x \in X}$ is a basis of the 
$K$-module $\Om^1_{K / \k}$. 
According to \cite[Theorems 26.5 and 26.8]{Ma},
the collection of elements $\{ b_x \}_{x \in X}$ is algebraically independent 
over $\k$, and $K$ is formally \'etale over the subfield 
$\k(\{ b_x \})$ generated by this collection. 
(Actually, if either $\opn{char} \k = 0$ or $K$ is finitely generated over 
$\k$, then the field $K$ is separable algebraic over $\k(\{ b_x \})$.
Cf.\ \cite[Theorem 26.2]{Ma}.)

For any $x \in X$ choose an arbitrary lifting $\si_{\mrm{b}}(b_x) \in A$ of the 
element $b_x$; namely $\pi(\si_{\mrm{b}}(b_x)) = b_x$. Since the collection 
$\{ b_x \}_{x \in X}$ is algebraically independent over $\k$, the subring 
$\k[\{ b_x \}]$ of $K$ is a polynomial ring. 
Therefore the function $\si_{\mrm{b}} : X \to A$ extends uniquely to a 
homomorphism 
$\si_{\mrm{p}} : \k[\{ b_x \}] \to A$ in $\cat{Ring}_{\mrm{c}} \k$.
Because $A$ is a local ring, for any nonzero element 
$b \in \k[\{ b_x \}]$ its lift $\si_{\mrm{p}}(b)$ is invertible in $A$. 
Thus $\si_{\mrm{p}}$ extends uniquely to a homomorphism 
$\si_{\mrm{r}} : \k(\{ b_x \}) \to A$.
(The subscripts b, p, r refer to ``basis'', ``polynomial'' and ``rational''.)

Let $A_i := A / \m^{i + 1}$, with surjection $\pi_i : A \to A_i$. Because 
$\k(\{ b_x \}) \to K$ is formally \'etale, the homomorphism 
$\pi_i \circ \si_{\mrm{r}} : \k(\{ b_x \}) \to A_i$ 
extends uniquely to a homomorphism 
$\si_i : K \to A_i$, which lifts $A_i \surj K = A_0$. We get an inverse system 
of liftings, and thus a lifting 
$\si : K \to \lim_{\leftarrow i} A / \m^i = A$, 
$\si := \lim_{\leftarrow i} \si_i$.
The restriction of $\si$ to $\k(\{ b_x \})$ equals $\si_{\mrm{r}}$, and in 
particular we see that $\si$ is a homomorphism in $\cat{Ring}_{\mrm{c}} \k$.

If $\k \to K$ is finite then $X = \emptyset$, and hence $\si$ is unique.
\end{proof}

\begin{rem}
Liftings exist whenever they can exist, namely iff $A$ contains a field. 
This is called the equal characteristics case. 
Indeed, if $A$ contains a field then it contains some perfect field $\k$ (e.g.\ 
$\Q$ or $\F_p$). Now Theorem \ref{thm:40} can be applied.
Note that if the residue field $K$ contains $\Q$, then $A$ also contains $\Q$. 

The complication arises when the residue field $K = A / \m$ contains $\F_p$, 
but $A$ does not contain it (i.e.\ $p \neq 0$ in $A$). This is called the mixed 
characteristics case. In this case the notion of lifting has to be modified. 
First the base ring $\k$ is replaced by two rings: a perfect field $\k$ of 
characteristic $p$, and a complete DVR $\til{\k}$, whose maximal ideal is 
generated by $p$, and its residue field is $\k$. 
The ring $\til{\k}$ is called the ring of Witt vectors of $\k$. 
(E.g.\ when $\k = \mbb{F}_p$, its ring of Witt vectors is 
$\til{\k} = \what{\Z}_{(p)}$, the $p$-adic integers.) 
A homomorphism $\k \to K$ lifts canonically to a homomorphism 
$\til{\k} \to A$. Next there is a complete DVR $\til{K}$, whose maximal ideal is 
generated by $p$, its residue field is $K$, and $\til{\k} \to \til{K}$ is 
$p$-adically formally smooth. Therefore there exists a 
lifting $\si : \til{K} \to A$ over $\til{\k}$. Moreover, all such 
liftings are controlled by $\Om^1_{K / \k}$, just as in the proof of Theorem 
\ref{thm:40}.

In this paper we shall be exclusively concerned with the equal characteristics 
case. 
\end{rem}

We are going to look at a more subtle lifting situation, involving topologies 
on $A$ and $K$. 

We consider the base ring $\k$ as a topological ring with the discrete 
topology. Recall that a topological $\k$-module is a $\k$-module $M$, 
endowed with a topology, such that addition and multiplication are 
continuous functions $M \times M \to M$ and $\k \times M \to M$ 
respectively. We say that the topology on $M$ is {\em $\k$-linear}, and that 
$M$ is a {\em linearly topologized $\k$-module}, if the element $0 \in M$ 
has a basis of open neighborhoods consisting of open $\k$-submodules. 

In order to define a $\k$-linear topology on a $\k$-module $M$, all we have to 
do is to provide a collection $\{ U_i \}_{i \in I}$ of $\k$-submodules 
of $M$, that is cofiltered under inclusion; namely for any $i, j \in I$ there 
exists $k \in I$ such that $U_k \subset U_i \cap U_j$.
The resulting topology on $M$, in which the collection $\{ U_i \}_{i \in I}$
is a basis of open neighborhoods of $0 \in M$, is called the $\k$-linear 
topology generated by this collection.

\begin{dfn}
Let $M_1, \ldots, M_p, N$ be  linearly topologized $\k$-modules, and let 
$\mu : \prod_{i = 1}^p M_i \to N$ be a $\k$-multilinear function. We say that 
$\mu$ is {\em semi-continuous} if for every 
$\bsym{m} = (m_1, \ldots, m_p) \in \prod_{i = 1}^p M_i$ and every 
$i \in \{ 1, \ldots, p \}$, the homomorphism 
\[ \mu_{\bsym{m}, i} : M_i \to N , \quad 
\mu_{\bsym{m}, i}(m'_i) := 
\mu(m_1, \ldots, m_{i - 1}, m'_i, m_{i + 1},  \ldots, m_p), \]
is continuous. 
\end{dfn}

Here is a definition from \cite{Ye1}. 

\begin{dfn}
A {\em semi-topological $\k$-ring} is a $\k$-ring $A$, with a $\k$-linear 
topology on it (so the underlying $\k$-module of $A$ is a linearly 
topologized $\k$-module), such that multiplication 
$\mu : A \times A \to A$  is a semi-continuous bilinear function.

The semi-topological $\k$-rings form a category $\cat{STRing} \k$, in which the 
morphisms are the continuous $\k$-ring homomorphisms.
\end{dfn}

We use the abbreviation ``ST'' for ``semi-topological''. 
The ring $\k$ with its discrete topology is the 
initial object of $\cat{STRing} \k$.
Inside $\cat{STRing} \k$ there is the full subcategory 
$\cat{STRing}_{\mrm{c}} \k$ of commutative ST $\k$-rings.

\begin{exa}
Suppose $A$ is a commutative $\k$-ring, and $\mfrak{a} \subset A$ is an ideal. 
Give $A$ the $\mfrak{a}$-adic topology. Then $A$ is a ST $\k$-ring. 
(The ring $A$ is actually a topological ring, namely multiplication 
$A \times A \to A$ is continuous.) 
The ring of Laurent series $A((t))$ -- see Definition \ref{dfn:41} -- is a ST 
$\k$-ring, but it is usually not a topological ring.   
\end{exa}

\begin{dfn}
Let $A$ be a ST $\k$-ring. A {\em left ST $A$-module} is a left 
$A$-module $M$, endowed with a $\k$-linear topology on it (so $M$ is a linearly 
topologized $\k$-module), such that the bilinear function 
$\mu : A \times M \to M$, $\mu(a, m) := a \cd m$, 
is semi-continuous.

The ST left $A$-modules form a category, in which the morphisms are the 
continuous $A$-linear homomorphisms. We denote this category by 
$\cat{STMod} A$. 
\end{dfn}

There is a similar right module version, denoted by $\cat{STMod} A^{\mrm{op}}$.

\begin{rem}
If $A$ is a discrete ST $\k$-ring (e.g.\ $A = \k$), then a ST $A$-module $M$ is 
also a topological $A$-module; namely the multiplication function 
$A \times M \to M$ is continuous. We will usually ignore this fact. 
\end{rem}

\begin{prop} \label{prop:41}
Let $A$ be a ST $\k$-ring. The category $\cat{STMod} A$ has these 
properties\tup{:}
\begin{enumerate}
\item It is a $\k$-linear additive category.
\item It has limits and colimits \tup{(}of arbitrary cardinality\tup{)}. 
In particular there are coproducts, products, kernels and cokernels.
\end{enumerate}
\end{prop}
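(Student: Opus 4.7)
The plan is to construct the relevant pieces by hand at the level of underlying $A$-modules equipped with appropriate $\k$-linear topologies, then verify semi-continuity of the $A$-action in each case and invoke standard category-theoretic assemblies for arbitrary limits and colimits. Part (1) is essentially formal: the zero module with its unique linear topology is a ST $A$-module, supplying both a zero object and null morphisms; and for any ST $A$-modules $M, N$, the continuous $A$-linear maps form a $\k$-submodule of $\opn{Hom}_A(M,N)$, because $\k$ is discrete and hence pointwise sums and $\k$-scalar multiples of continuous maps remain continuous. Composition is then $\k$-bilinear. Finite biproducts arise as a special case of the products in (2), completing (1).

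For (2), I would first build the four basic (co)limits. For a family $\{M_i\}_{i \in I}$ in $\cat{STMod}\,A$, take $\prod_i M_i$ with the product topology, generated by the submodules $\prod_i U_i$ with $U_i \subset M_i$ open and $U_i = M_i$ for almost all $i$; and take $\bigoplus_i M_i$ with the finest $\k$-linear topology making the inclusions $M_j \hookrightarrow \bigoplus_i M_i$ continuous, which is generated by the submodules $\bigoplus_i U_i$ with each $U_i$ open. For a morphism $\phi : M \to N$, the kernel is $\opn{Ker}(\phi) \subset M$ with the subspace topology and the cokernel is $N / \phi(M)$ with the quotient topology. Once these are shown to lie in $\cat{STMod}\,A$ and to have the correct universal properties, arbitrary limits and colimits follow from the standard recipe: an equalizer of $\phi, \psi$ is the kernel of $\phi - \psi$, a coequalizer is the corresponding cokernel, and general limits/colimits decompose into products/coproducts glued by equalizers/coequalizers.

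Semi-continuity of the $A$-action is routine for products (checked coordinate-wise using the semi-continuity given on each $M_i$), for kernels (semi-continuity restricts to any $A$-submodule equipped with the subspace topology), and for cokernels: for fixed $a \in A$ the continuous endomorphism $n \mapsto a \cd n$ of $N$ descends to a continuous endomorphism of the quotient, while for fixed $\bar n \in N/\phi(M)$ with lift $n \in N$, the map $a \mapsto a \cd \bar n$ is the composition of the continuous $a \mapsto a \cd n$ with the continuous quotient map. The step I expect to need the most care is semi-continuity for the coproduct $\bigoplus_i M_i$. Fixing $m = (m_i)$, only finitely many coordinates of $m$ are nonzero, so $a \mapsto a \cd m$ is a finite sum of the continuous composites $A \to M_i \hookrightarrow \bigoplus_j M_j$ and is therefore continuous. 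Fixing $a \in A$, the $\k$-linear endomorphism $m \mapsto a \cd m$ of $\bigoplus_i M_i$ is continuous because each composite $M_j \hookrightarrow \bigoplus_i M_i$, $m_j \mapsto a \cd m_j$, is continuous (multiplication by $a$ in $M_j$ followed by a continuous inclusion), and the coproduct topology is by construction the finest $\k$-linear topology making all such composites continuous. This assembles all the ingredients needed for the proposition.
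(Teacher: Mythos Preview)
Your proposal is correct and follows essentially the same approach as the paper: you construct coproducts via $\bigoplus_i M_i$ with the topology generated by the submodules $\bigoplus_i U_i$, products with the product topology, kernels and cokernels with the subspace and quotient topologies respectively, and then assemble general (co)limits from these. The paper's proof omits the semi-continuity verifications that you spell out, but the constructions and the overall strategy are identical.
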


\begin{proof}
This is all essentially in \cite[Section 1.2]{Ye1}. The fact that 
$\cat{STMod} A$ is $\k$-linear is clear. 

Given a collection $\{ M_x \}_{x \in X}$ of ST $A$-modules, indexed by a set 
$X$, let $M := \lb \bigoplus_{x \in X} M_x$ be the direct sum in $\cat{Mod} A$.
Given any collection $\{ U_x \}_{x \in X}$, where $U_x \subset M_x$ is an open 
$\k$-submodule, let $U := \bigoplus_{x \in X} U_x$, which is a $\k$-submodule 
of $M$. Put on $M$ the $\k$-linear topology generated by these $\k$-submodules 
$U$. This makes $M$ into a ST $A$-module, and together with the embeddings 
$M_x \inj M$ it becomes a coproduct in $\cat{STMod} A$. 
Likewise, the product $\prod_{x \in X} M_x$ in $\cat{Mod} A$, with the product 
topology, becomes a product in $\cat{STMod} A$. 

Let $\phi : M \to N$ be a homomorphism in $\cat{STMod} A$.
Then $\opn{Ker}(\phi)$, with the topology induced on it from $M$ (i.e.\ the 
subspace topology), is a kernel of $\phi$. The module $\opn{Coker}(\phi)$, with 
the topology induced on it from $N$ (i.e.\ the quotient topology), is a 
cokernel of $\phi$.

Now that we have coproducts, products, kernels and cokernels, any limit and 
colimit can be produced in $\cat{STMod} A$.
\end{proof}

Let $A \in \cat{STRing} \k$. We often use the notation 
$\opn{Hom}^{\mrm{cont}}_A(M, N)$
to denote the $\k$-module of continuous $A$-linear homomorphism between two 
ST left $A$-modules $M$ and $N$. This is just another way to refer to the 
$\k$-module $\opn{Hom}_{\cat{STMod} A}(M, N)$.

\begin{rem}
The concept of ST module is very close to the concept of {\em smooth 
representation} from the theory of representations of topological groups. 
Perhaps some of our work here can be used in that area.   
\end{rem}

\begin{dfn} \label{dfn:20}
Let $A$ be a ST ring, and let $M$ be a left $A$-module. The 
{\em fine $A$-module topology} on $M$ is the finest linear topology on $M$ 
that makes it into a ST $A$-module. 
\end{dfn}

It is not clear at first whether such a topology exists; but it 
does -- see \cite[Section 1.2]{Ye1}. The fine topology
can be characterized as follows: a ST 
$A$-module $M$ has the fine $A$-module topology iff for any 
$N \in \cat{STMod} A$ the canonical function 
\[ \opn{Hom}^{\mrm{cont}}_A(M, N) \to \opn{Hom}^{}_A(M, N) \]
is bijective. This is proved in \cite[Proposition 1.2.4]{Ye1}. 
(So we get a left adjoint to the forgetful functor 
$\cat{STMod} A \to \cat{Mod} A$.) 

The fine $A$-module topology can be described quite explicitly.
First consider a free module $F = \bigoplus_{x \in X} A$.
The direct sum (i.e.\ coproduct) topology on it is the fine topology. 
Now take any $A$-module $M$, and let $F \surj M$ be some 
$A$-linear surjection from a free module $F$. Then the the quotient 
topology on $M$ coincides with its fine topology.

\begin{dfn}
Let $\phi : M \to N$ be a homomorphism  in $\cat{STMod} A$.
\begin{enumerate}
\item $\phi$ is called a {\em strict monomorphism} if it is injective, and 
the topology on $M$ equals the subspace topology on it induced by $\phi$ and 
$N$. 
\item $\phi$ is called a {\em strict epimorphism} if it is surjective, and 
the topology on $N$ equals the quotient topology on it induced by $\phi$ and 
$M$.
\end{enumerate}
\end{dfn}

\begin{exa}
Let $\phi : M \to N$ be a homomorphism in $\cat{STMod} A$, and assume both 
modules have the fine $A$-module topologies. 
If $\phi$ is a surjection, then it is a strict epimorphism. 
If $\phi : M \to N$ a split injection in $\cat{STMod} A$, then it is 
a strict monomorphism. 
\end{exa}

\begin{dfn} 
Let $f : A \to B$ be a homomorphism  in $\cat{STRing} \k$.
We say that $f$ is a {\em strict monomorphism} (resp.\ {\em strict epimorphism})
in $\cat{STRing} \k$ if it is so in $\cat{STMod} \k$.
\end{dfn}

\begin{dfn} \label{dfn:21}
Let $A \in \cat{STRing}_{\mrm{c}} \k$, let $f : A \to B$ be a homomorphism in 
$\cat{Ring}_{\mrm{c}} \k$, and let $M \in \cat{Mod} B$. We view $M$ as an 
$A$-module via $f$. The fine $A$-module topology on $M$ is called 
the {\em fine $(A, f)$-module topology}.
\end{dfn}

\begin{lem} \label{lem:41}
In the situation of Definition \tup{\ref{dfn:21}:}
\begin{enumerate}
\item The ring $B$, with the fine $(A, f)$-module 
topology, becomes an object of \lb $\cat{STRing}_{\mrm{c}} \k$; and
$f : A \to B$ becomes a morphism in  $\cat{STRing}_{\mrm{c}} \k$.
\item Give $B$ the fine $(A, f)$-module topology. Then the fine $B$-module 
topology on $M$ coincides with the fine $(A, f)$-module topology on it. 
Therefore $M$, endowed with the fine $(A, f)$-module topology,
is an object of $\cat{STMod} B$.
\end{enumerate}
\end{lem}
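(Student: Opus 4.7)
The plan is to exploit the universal property of the fine $A$-module topology (\cite[Proposition 1.2.4]{Ye1}): any $A$-linear homomorphism out of $B$ (with its fine topology) into a ST $A$-module is automatically continuous. Everything in the lemma reduces to careful applications of this fact together with the semi-continuity of the relevant bilinear actions.

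For part (1), I would first check that $f : A \to B$ is continuous. Since $B$ is required to be a ST $A$-module, the scalar multiplication $A \times B \to B$ is semi-continuous; fixing the second variable to be $1 \in B$ gives that $a \mapsto a \cdot 1 = f(a)$ is continuous, so $f$ is a morphism in $\cat{STRing}_{\mrm{c}} \k$. Next I would verify that multiplication $\mu : B \times B \to B$ is semi-continuous. For fixed $b_0 \in B$ the translation $m_{b_0} : B \to B$, $b \mapsto b_0 b$, is $A$-linear (using commutativity and associativity), so by the universal property of the fine $A$-module topology on the source, $m_{b_0}$ is continuous; by symmetry (or commutativity) this handles both variables, and hence $B \in \cat{STRing}_{\mrm{c}} \k$.

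For part (2), write $\tau_A$ for the fine $(A,f)$-module topology on $M$ and $\tau_B$ for the fine $B$-module topology on $M$ (where $B$ is equipped with $\tau_A$). I would prove $\tau_A = \tau_B$ by showing that each one makes $M$ into a ST module over the other structure, and then invoke maximality of both.

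First I would show $(M,\tau_A)$ is a ST $B$-module. For fixed $b_0 \in B$, the map $m \mapsto b_0 \cdot m$ on $M$ is $A$-linear, hence continuous on $(M,\tau_A)$ by part (1) of the fine-topology universal property applied to $M$. For fixed $m_0 \in M$, the map $B \to M$, $b \mapsto b \cdot m_0$, is $A$-linear, and since $B$ carries the fine $A$-module topology, it is continuous into the ST $A$-module $(M,\tau_A)$. So $(M,\tau_A)$ is a ST $B$-module, and by maximality of $\tau_B$, we get $\tau_A \subseteq \tau_B$.

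Conversely, I would show $(M,\tau_B)$ is a ST $A$-module via $f$. For fixed $a_0 \in A$, scalar multiplication by $a_0$ factors as multiplication by $f(a_0) \in B$, which is continuous on $(M,\tau_B)$ by semi-continuity of the $B$-action. For fixed $m_0 \in M$, the map $A \to M$, $a \mapsto f(a)\cdot m_0$, is the composite $A \xrightarrow{f} B \xrightarrow{\cdot m_0} M$ of two continuous maps (the first by part (1), the second by semi-continuity of the $B$-action on $(M,\tau_B)$). Hence $(M,\tau_B)$ is a ST $A$-module, and maximality of $\tau_A$ gives $\tau_B \subseteq \tau_A$. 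Combining, $\tau_A = \tau_B$. No step looks like a serious obstacle; the only mild subtlety is keeping track of which universal property (finest among which class of topologies) is being invoked at each step.
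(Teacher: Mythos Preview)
Your argument is correct and is precisely an expansion of the paper's own proof, which simply reads ``Easy using \cite[Proposition 1.2.4]{Ye1}.'' You have unwound that universal property exactly as intended, with no gaps.
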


\begin{proof}
Easy using \cite[Proposition 1.2.4]{Ye1}.
\end{proof}

\begin{lem} \label{lem:42}
Let $\{ B_i \}_{i \in \N}$ be an inverse system in 
$\cat{STRing} \k$. Then the ring
$B := \lim_{\leftarrow i} B_i$, with its inverse limit topology 
\tup{(}Proposition \tup{\ref{prop:41}(2))}, is a ST $\k$-ring. 
\end{lem}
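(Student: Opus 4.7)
The plan is to rely on the universal property of the inverse limit, which has already been set up for the underlying ST $\k$-modules in Proposition \ref{prop:41}(2). Indeed, the excerpt tells us that limits exist in $\cat{STMod} \k$, so $B = \lim_{\leftarrow i} B_i$, equipped with the inverse limit topology, is automatically a linearly topologized $\k$-module: a basic system of open $\k$-submodules at $0 \in B$ is given by the preimages $\pi_j^{-1}(U)$, where $\pi_j : B \to B_j$ is the canonical projection and $U \subset B_j$ runs over open $\k$-submodules. Moreover each $\pi_j$ is continuous by construction. It is also a standard check that $B$ is a ring under coordinate-wise multiplication, and that the $\pi_j$ are ring homomorphisms. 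So the only remaining point is to verify that multiplication
\[ \mu : B \times B \to B , \qquad \mu(b, b') := b \cd b' , \]
is semi-continuous.

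To this end, fix $b \in B$ and consider the $\k$-linear map
$\mu_{b, 2} : B \to B$ given by $\mu_{b, 2}(b') := b \cd b'$; by symmetry of the argument, treating $\mu_{b', 1}$ is identical, so this is the only case I would write down. A $\k$-linear map into the inverse limit $B$ is continuous iff each composition with $\pi_j$ is continuous. Now
\[ \pi_j \circ \mu_{b, 2} = \mu^{B_j}_{\pi_j(b), 2} \circ \pi_j , \]
where $\mu^{B_j}_{\pi_j(b), 2} : B_j \to B_j$ denotes left multiplication by $\pi_j(b)$ in $B_j$. The latter map is continuous because $B_j \in \cat{STRing} \k$, and $\pi_j$ is continuous by definition of the inverse limit topology. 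Hence $\pi_j \circ \mu_{b, 2}$ is continuous for every $j$, so $\mu_{b, 2}$ is continuous into $B$. This is precisely the semi-continuity of $\mu$.

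There is really no obstacle here: the entire argument reduces the semi-continuity of multiplication in $B$ to the semi-continuity of multiplication in each $B_j$, via the universal property. Note that the assumption that the index set is $\N$ plays no essential role in the argument; the same proof works for any cofiltered (or even arbitrary) inverse system, and in particular $B$ is a $\k$-linearly topologized ring for purely formal reasons. The only delicate point to keep in mind is that $B$ need not be a topological ring, only semi-topological, precisely because we can only fix one argument of $\mu$ at a time when applying the universal property.
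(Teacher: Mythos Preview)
Your proof is correct and is essentially a fully spelled-out version of what the paper records as ``This follows almost immediately from the definitions.'' The key point you isolate --- reducing continuity of one-sided multiplication on $B$ to continuity of one-sided multiplication on each $B_j$ via the universal property of the limit --- is exactly the content the paper leaves implicit.
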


\begin{proof}
This follows almost immediately from the definitions.
\end{proof}

Here is the most important construction of ST rings in our context. This is 
\cite[Definition 1.3.3]{Ye1}. Lemmas \ref{lem:41} and \ref{lem:42} justify it. 

\begin{dfn} \label{dfn:41}
Let $A$ be a commutative ST $\k$-ring.
\begin{enumerate}
\item Let $t$ be a variable. 
\begin{enumerate}
\item For any $i \in \N$ put on the truncated polynomial ring 
$A[t] / (t^{i + 1})$ the fine $A$-module topology. 
This makes $A[t] / (t^{i + 1})$ a ST $\k$-ring. 

\item Give the ring of formal power series
 $A[[t]] :=  \lim_{\leftarrow i} A[t] / (t^{i + 1})$
the inverse limit topology. 
In this way $A[[t]]$ is a ST $\k$-ring. 

\item Put on the ring of formal Laurent series 
$A((t)) := A[[t]][t^{-1}]$ the fine $A[[t]]$-module topology. 
In this way $A((t))$ is a ST $\k$-ring. 
\end{enumerate}
\item Let $\bsym{t} = (t_1, \ldots, t_n)$ be a sequence of variables. 
The {\em ring of iterated Laurent series} 
\[ A((\bsym{t})) = A((t_1, \ldots, t_n)) \]
is the commutative ST $\k$-ring defined recursively on $n$ by
\[ A((t_1, \ldots, t_n)) := A((t_2, \ldots, t_n))((t_1)) , \]
using part (1).
\end{enumerate}
\end{dfn}

Note that as  ST $A$-modules there is an isomorphism 
\[ A((t)) = 
\big( \prod\nolimits_{i \geq 0} A \cd t^i \big) \oplus 
\big( \bigoplus\nolimits_{i < 0} A \cd t^i \big) \cong 
\big( \prod\nolimits_{i \in \N} A \big) \oplus 
\big( \bigoplus\nolimits_{i \in \N} A \big) . \]

\begin{rem} \label{rem:40}
Strangely, for $n \geq 2$ (and when $A$ is nonzero), the ring 
$B := A((t_1, \ldots, t_n))$ is {\em not topological}; namely multiplication is 
not a continuous function $B \times B \to B$. This is the reason for 
introducing the semi-topological apparatus. 

Furthermore, the topology on $B$ is {\em not metrizable}. 
Still $B$ is {\em complete}, in the sense that the canonical homomorphism 
$B \to \lim_{\leftarrow U} B / U$, where $U$ runs over all open $\k$-submodules
of $B$, is bijective. 
\end{rem}

\begin{exer}
Let $K := \k((\bsym{t}))$, the ring of Laurent series in the sequence of 
variables $\bsym{t} = (t_1, \ldots, t_n)$, with its topology from Definition 
\ref{dfn:41}. Let $\opn{F}(\Z^n, \k)$ be the set of functions 
$a : \Z^n \to \k$, written in subscript notation, namely for 
$\bsym{i} = (i_1, \ldots, i_n) \in \Z^n$  
the value of $a$ is $a_{\bsym{i}} \in \k$. 
The notation for monomials in $\bsym{t}$ is 
$\bsym{t}^{\bsym{i}} := t_1^{i_1} \cdots t_n^{i_n}$. 
We say that a collection 
$\{ a_{\bsym{i}} \bsym{t}^{\bsym{i}} \}_{\bsym{i} \in \Z^n}$
of elements of $K$ is a {\em Cauchy collection} if for every open 
$\k$-submodule $U \subset K$ there is a finite subset $I \subset \Z^n$ such 
that $a_{\bsym{i}} \bsym{t}^{\bsym{i}} \in U$ for all $\bsym{i} \notin I$.
A function $a \in \opn{F}(\Z^n, \k)$ is called Cauchy if the collection 
$\{ a_{\bsym{i}} \bsym{t}^{\bsym{i}} \}_{\bsym{i} \in \Z^n}$
is Cauchy. The set of Cauchy functions is denoted by 
$\opn{F}_{\mrm{c}}(\Z^n, \k)$. The exercise is to show that 
for any $a \in \opn{F}_{\mrm{c}}(\Z^n, \k)$
the series 
$\sum_{\bsym{i} \in \Z^n} a_{\bsym{i}} \bsym{t}^{\bsym{i}}$
converges in $K$, and that the resulting function 
$\opn{F}_{\mrm{c}}(\Z^n, \k) \to K$ is a $\k$-linear bijection. 
(For a slightly more general assertion see the end of \cite[Section 1.3]{Ye1}.)
\end{exer}

\begin{dfn} \label{dfn:65}
Let $f : A \to B$ be a homomorphism in $\cat{STRing} \k$.
Given $M \in \cat{STMod} B$, we denote by $\opn{rest}_f(M)$ the ST $A$-module 
whose underlying ST $\k$-module is $M$, and $A$ acts via $f$. 
\end{dfn}

In this way we get a functor 
\[ \opn{rest}_f : \cat{STMod} B \to \cat{STMod} A. \]

We now return to liftings. 

\begin{dfn} \label{dfn:22}
Let $A$ be a local ring in $\cat{STRing}_{\mrm{c}} \k$, with residue 
field $K$. We put on $K$ the fine $A$-module topology, so that the canonical 
surjection $\pi : A \surj K$ is a morphism in $\cat{STRing}_{\mrm{c}} \k$.
A {\em lifting of $K$} in $\cat{STRing}_{\mrm{c}} \k$
is a homomorphism $\si : K \to A$ in $\cat{STRing}_{\mrm{c}} \k$, such that 
the composition $\pi \circ \si$ is the identity of $K$. 
\end{dfn}

The important thing to remember is that $\si : K \to A$ has to be continuous. 

\begin{exa} \label{exa:100}
Assume $\k$ is a field of characteristic $0$. 
Let $K := \k((t_2))$ and $A := K[[t_1]]$, with topologies from Definition 
\ref{dfn:41}. So we are in the situation of Definition \ref{dfn:22}.
Consider the lifting $\si : K \to A$ from Example \ref{exa:30}.
If at least one of the elements $c_i$ is nonzero, the lifting $\si$ is not 
continuous. 
\end{exa}

\begin{rem}
Let $A$ be a local ring in $\cat{STRing}_{\mrm{c}} \k$, with maximal 
ideal $\m$. We do not assume any relation between the given topology of $A$ 
and its $\m$-adic topology. For instance, $A$ could have the discrete topology, 
which is finer than any other topology. 

On the other hand, in Example \ref{exa:100} above,
where $A = \k((t_2))[[t_1]]$ and $\m = (t_1)$, the $\m$-adic topology on  $A$
is finer than the given topology on it (since the discrete topology on 
$K = \k((t_2))$ is finer than its $t_2$-adic topology). 
\end{rem}

The next definition is a generalization of \cite[Definition 2.2.1]{Ye1}.

\begin{dfn} \label{dfn:23}
Let $A$ be an artinian local ring in $\cat{STRing}_{\mrm{c}} \k$, with 
residue field $K$. We put on $K$ the fine $A$-module topology, so that the 
canonical surjection $\pi : A \surj K$ is a strict epimorphism in 
$\cat{STRing}_{\mrm{c}} \k$.
\begin{enumerate}
\item A lifting $\si : K \to A$ in $\cat{STRing}_{\mrm{c}} \k$
is called a {\em precise lifting} if 
the original topology of $A$ equals the fine $(K, \si)$-module topology on it.
\item The topology on $A$ is called a {\em precise topology}, and $A$ is called 
a {\em precise artinian local ring in $\cat{STRing}_{\mrm{c}} \k$}, if there 
exists some precise lifting $\si : K \to A$ in $\cat{STRing}_{\mrm{c}} \k$.
\end{enumerate}
\end{dfn}

Here are examples. 

\begin{exa}
Start with an artinian local ring $A$ in $\cat{Ring}_{\mrm{c}} \k$,
and with a given lifting $\si : K \to A$ of the residue field.  
Put any topology on $K$ that makes it into an object of 
$\cat{STRing}_{\mrm{c}} \k$. 
Next give $A$ the fine $(K, \si)$-module topology. 
According to Lemma \ref{lem:41}(2), the fine $A$-module topology on $K$ 
equals its original topology. We see that $\si : K \to A$ is a precise lifting, 
and hence $A$ is a  precise artinian local ring in 
$\cat{STRing}_{\mrm{c}} \k$.
\end{exa}

Definition \ref{dfn:23} makes sense also for an artinian {\em semi-local} ring 
$A$, with Jacobson radical $\r$ and residue ring $K := A / \r$. Of course here 
$K$ is a finite product of fields. This is used in the next example. 

\begin{exa} \label{exa:210}
In this example we use the Beilinson completion that's explained in Section 
\ref{sec:geom}. Assume $\k$ is a perfect field, and let $X$ be a finite type 
$\k$-scheme. Take a saturated chain of points 
$\xi = (x_0, \ldots, x_n)$ in $X$, and let 
$A := \OO_{X, x_0} / \m^{l + 1}_{x_0}$ for some $l \in \N$.
So $A$ is an artinian local ring, and its residue field is $K := \kk(x_0)$. 
Let $\si : K \to A$ be a lifting in $\cat{Ring}_{\mrm{c}} \k$. 
 
We view $A$ and $K$ as quasi-coherent sheaves on $X$, constant on  
the closed set $\ol{\{ x_0 \}}$. The lifting $\si$ is a differential operator 
of $\OO_X$-modules, and hence, according to 
\cite[Propositions 3.1.10 and 3.2.2]{Ye1}, there is a homomorphism 
$\si_{\xi} : K_{\xi} \to A_{\xi}$ 
in $\cat{STRing}_{\mrm{c}} \k$ that lifts the canonical surjection 
$\pi_{\xi} : A_{\xi} \to K_{\xi}$. 
The arguments in the proof of \cite[Proposition 3.2.5]{Ye1} show that 
$K_{\xi}$ has the fine $A_{\xi}$-module topology, and vice versa. 

By Theorem \ref{thm:50} the ring $K_{\xi}$ is a finite product of fields. 
Therefore $A_{\xi}$ is an artinian semi-local ring, with residue ring 
$K_{\xi}$. We see that the lifting $\si_{\xi} : K_{\xi} \to A_{\xi}$ is a 
precise lifting, and $A_{\xi}$ is a precise artinian semi-local ring in 
$\cat{STRing}_{\mrm{c}} \k$. 
\end{exa}

\begin{exa} \label{exa:211}
Assume $\k$ is a field. Let $K := \k((t))$ with the discrete topology, and let 
$\m := \k((t))$ with the $t$-adic topology. We view $\m$ as a ST $K$-module. 
Define $A := K \oplus \m$,
the trivial extension of $K$ by $\m$ (so $\m^2 = 0$). 
For any lifting $\si : K \to A$, the $(K, \si)$-module topology on $A$
is the discrete topology. Therefore there is no precise lifting, and 
$A$ is not a precise artinian local ring in $\cat{STRing}_{\mrm{c}} \k$.
\end{exa}

A question that immediately comes to mind is this: If $A$ is a precise artinian 
local ring in $\cat{STRing}_{\mrm{c}} \k$, are all liftings 
$\si : K \to A$ in $\cat{STRing}_{\mrm{c}} \k$ precise? This is answered 
affirmatively in Corollary \ref{cor:21} in the next section. 

Let $A$ be a ST $\k$-ring and let $M$ be a ST $A$-module. The closure 
$\ol{\{ 0 \}}$ of the zero submodule $\{ 0 \}$ is an $A$-submodule of $M$.
 
\begin{dfn} \label{dfn:25}
Let $A$ be a ST $\k$-ring and let $M$ be a ST $A$-module.
\begin{enumerate}
\item If $\{ 0 \}$ is closed in $M$, then $M$ is called a {\em separated 
ST module}.
\item Define $M^{\mrm{sep}} := M / \ol{\{ 0 \}}$. This is a ST $A$-module with 
the quotient topology from $M$, and we call it {\em the separated ST module 
associated to $M$}. 
\end{enumerate}
\end{dfn}

Of course $M$ is a separated ST module iff it is a Hausdorff topological space.

The assignment $M \mapsto M^{\mrm{sep}}$ is 
a $\k$-linear functor from $\cat{STMod} A$ to itself. 
There is a functorial strict epimorphism $\tau_M : M \to  M^{\mrm{sep}}$.
The ST module $M^{\mrm{sep}}$ is separated, and it is easy to 
see that for any separated ST $A$-module $N$ the homomorphism 
\[ \opn{Hom}_{A}^{\mrm{cont}}(M^{\mrm{sep}}, N) \to  
\opn{Hom}_{A}^{\mrm{cont}}(M, N) \]
induced by $\tau_M$ is bijective. 

\begin{rem}
The reader might wonder why we work with separated modules and not with 
complete modules. The reason is that for a ST $A$-module $M$, its completion 
$\what{M} := \lim_{\leftarrow U} M / U$, where $U$ runs over all open subgroups
of $M$, could fail to be an $A$-module! 

However, in many important instances (such as the module of differentials of a 
topological local field), the ST $A$-module $M^{\mrm{sep}}$ turns out to be 
complete. 
\end{rem}

We end this section with a discussion of ST tensor products. The next 
definition is \cite[Definition 1.2.11]{Ye1}.

\begin{dfn} \label{dfn:43}
Suppose $A$ is a commutative ST $\k$-ring, and $M_1, \ldots, M_p$ are ST 
$A$-modules. The tensor product topology on the $A$-module
\[ \bigotimes_{i = 1}^p M_i := M_1 \otimes_A \cdots \otimes_A M_p \]
is the finest linear topology such that the canonical
multilinear function 
$\prod_{i = 1}^p M_i \to \bigotimes_{i = 1}^p M_i$
is semi-continuous. 
\end{dfn}

With this topology $\bigotimes_{i = 1}^p M_i$ is a ST 
$A$-module. Given any semi-continuous $A$-multilinear function 
$\be : \prod_{i = 1}^p M_i \to N$,
where $N$ is a ST $A$-module, the corresponding $A$-linear homomorphism 
$\bigotimes_{i = 1}^p M_i \to N$ is continuous. 
For more details see \cite[Section 1.2]{Ye1}.

\begin{exa}
Let $f : A \to B$ be a homomorphism in $\cat{STRing}_{\mrm{c}} \k$, and let
$M \in \cat{STMod} A$. Then $B \ot_A M$, with the tensor product topology, is a 
ST $B$-module. We get an adjoint to the forgetful functor $\opn{rest}_f$.
If $M$ has the fine $A$-module topology, then 
$B \ot_A M$ has the fine $B$-module topology.
See \cite[Proposition 1.2.14 and Corollary 1.2.15]{Ye1}.
\end{exa}

\begin{rem}
Assume the base ring $\k$ is a field. 
Let $M$ and $N$ be ST $\k$-modules (i.e.\ linearly topologized $\k$-modules).
In \cite{Be2}, Beilinson talks about three topologies on the tensor product 
$M \ot_{\k} N$.
 
In our paper we encounter two topologies on $M \ot_{\k} N$.
The first is the tensor product topology from Definition \ref{dfn:43}. 
It is symmetric: an isomorphism $M \cong N$ in $\cat{STMod} \k$
induces an isomorphism 
$M \ot_{\k} N \cong N \ot_{\k} M$ in $\cat{STMod} \k$.

For the second kind of tensor product topology consider 
$M := \k(t_2)$ with the $t_2$-adic topology, and $N := \k(t_1)$
the $t_1$-adic topology. So $M \cong N$ in $\cat{STMod} \k$.
Let $K := \k((t_1, t_2))$ be the field of iterated Laurent series, with the
topology of Definition \ref{dfn:41}, starting from the discrete topology 
on $\k$. The embedding $M \ot_{\k} N \subset K$
induces a topology on it, making it into a ST $\k$-module.
Presumably this topology on $M \ot_{\k} N$ can be described in terms of  
the topologies of $M$ and $N$. 
Now $K$ is complete, and $M \ot_{\k} N$ is dense in it. Since the roles of the 
two variables in $K$ is different (e.g.\ the series 
$\sum_{i \in \N} t_1^i \cd t_2^{-i}$ is summable, but the series 
$\sum_{i \in \N} t_1^{-i} \cd t_2^i$ is not summable),
we see that this topology on $M \ot_{\k} N$ is not symmetric. 

It should be interesting to compare our two tensor product topologies to the 
three discussed in \cite{Be2}.
\end{rem}

\section{Continuous Differential Operators}
\label{sec:CDOs}

Our approach to continuous differential operators is an adaptation to the ST 
context of the definitions from \cite{EGA-IV}.
We are following \cite{Ye1} and \cite{Ye2}.
Recall that the base ring $\k$ is a nonzero commutative ring, and it has  
the discrete topology.

Let $A$ be a commutative $\k$-ring. 
Any $\k$-central $A$-bimodule $P$ has an increasing filtration
$\{ F_i(P) \}_{i \in \Z}$ by $A$-sub-bimodules, called the differential 
filtration. This filtration is defined inductively.
For $i \leq -1$ we define $F_i(P) := 0$. For $i \geq 0$ the elements of 
$F_i(P)$ are the elements $p \in P$ such that 
$a \cd p - p \cd a \in F_{i - 1}(P)$
for every $a \in A$. 

Now assume $A$ is a commutative ST $\k$-ring, and let $M, N$ be ST 
$A$-modules. The set $\opn{Hom}_{\k}^{\mrm{cont}}(M, N)$ of continuous 
$\k$-linear homomorphisms is a $\k$-central $A$-bimodule, so it has a 
differential filtration. We define 
\[ \opn{Diff}_{A / \k}^{\mrm{cont}}(M, N) := 
\bigcup_{i} \ F_i \bigl( \opn{Hom}_{\k}^{\mrm{cont}}(M, N) \bigr) 
\subset \opn{Hom}_{\k}^{\mrm{cont}}(M, N) . \]
The elements of 
\[  F_i \bigl( \opn{Diff}_{A / \k}^{\mrm{cont}}(M, N) \bigr) := 
F_i \bigl( \opn{Hom}_{\k}^{\mrm{cont}}(M, N) \bigr) \]
are by definition continuous differential operators of order $\leq i$. 
Note that 
\[ F_0 \bigl( \opn{Diff}_{A / \k}^{\mrm{cont}}(M, N) \bigr)  =
\opn{Hom}_{A}^{\mrm{cont}}(M, N) . \]

When $N = M$ we write 
\[ \opn{Diff}_{A / \k}^{\mrm{cont}}(M) :=  
\opn{Diff}_{A / \k}^{\mrm{cont}}(M, M) . \]
This is a subring of $\opn{End}_{\k}^{\mrm{cont}}(M)$.
Let $\opn{Der}_{A / \k}^{\mrm{cont}}(M)$ be the $A$-module of continuous 
$\k$-linear derivations $A \to M$. Then 
\[ F_1 \bigl( \opn{Diff}_{A / \k}^{\mrm{cont}}(A, M) \bigr)  =
M \oplus \opn{Der}_{A / \k}^{\mrm{cont}}(M) \]
as left $A$-modules. 

If $M = A$ then we write 
\begin{equation} \label{eqn:25}
\DD^{\mrm{cont}}_{A / \k} := \opn{Diff}_{A / \k}^{\mrm{cont}}(A) .
\end{equation}
This is the ring of continuous differential operators of $A$ (relative to 
$\k$). Let us write 
$\mcal{T}^{\mrm{cont}}_{A / \k} := \opn{Der}_{A / \k}^{\mrm{cont}}(A)$,
the Lie algebra of continuous derivations of $A$. Then  
\[ F_1(\DD^{\mrm{cont}}_{A / \k}) = A \oplus \mcal{T}^{\mrm{cont}}_{A / \k} \]
as left $A$-modules. 

If $A$ is discrete, then 
$\DD^{\mrm{cont}}_{A / \k} = \DD_{A / \k}$, the usual ring of differential 
operators from \cite{EGA-IV}; and 
$\mcal{T}^{\mrm{cont}}_{A / \k} = \mcal{T}_{A / \k}$, the usual Lie algebra of 
derivations.

\begin{rem}
There is a canonical topology on $\opn{Hom}_{\k}^{\mrm{cont}}(M, N)$, called 
the {\em Hom topology}, making it a ST $A$-module. See 
\cite[Definition 1.1]{Ye2}. However, in this paper we shall not need this 
topology, and hence we consider $\opn{Hom}_{\k}^{\mrm{cont}}(M, N)$ as an 
untopologized object (or as a discrete ST $\k$-module). 
\end{rem}

\begin{exa} \label{exa:68}
Let $\bsym{t} = (t_1, \ldots, t_n)$ be a sequence of variables of length 
$n \geq 1$. In Definition \ref{dfn:41} we saw how to make the ring of iterated 
Laurent series 
$\k((\bsym{t})) := \k((t_1, \ldots, t_n))$
into a ST $\k$-ring. This a separated ST ring, i.e.\ 
$\k((\bsym{t})) = \k((\bsym{t}))^{\mrm{sep}}$. Let 
$\k[\bsym{t}]$ be the polynomial ring, with discrete topology. 
According to \cite[Corollary 1.5.19]{Ye1} the ring homomorphism 
$\k[\bsym{t}] \to \k((\bsym{t}))$ is {\em topologically \'etale relative to 
$\k$}. This implies that any $\k$-linear differential operator $\phi$ on 
$\k[\bsym{t}]$ extends uniquely to a continuous $\k$-linear differential 
operator $\what{\phi}$ on $\k((\bsym{t}))$. This gives us a ring homomorphism 
$\DD_{\k[\bsym{t}] / \k} \to \DD^{\mrm{cont}}_{\k((\bsym{t})) / \k}$
that respects the differential filtrations, and such that the induced 
homomorphism
\begin{equation} \label{eqn:26}
\k((\bsym{t})) \ot_{\k[\bsym{t}]} \DD_{\k[\bsym{t}] / \k} \to 
\DD^{\mrm{cont}}_{\k((\bsym{t})) / \k}
\end{equation}
is bijective. 

If $\k$ has characteristic $0$ (i.e.\ $\Q \subset \k$), then by (\ref{eqn:26}) 
any $\what{\phi} \in F_l(\DD^{\mrm{cont}}_{\k((\bsym{t})) / \k})$
can be expressed uniquely as a finite sum
\begin{equation} \label{eqn:60}
\what{\phi} = 
\sum_{(i_1, \ldots, i_n)} a_{(i_1, \ldots, i_n)} \cd 
\pa_1^{i_1} \cdots \pa_n^{i_n} ,
\end{equation}
where $i_k \in \N$, $\sum_k i_k \leq l$, 
$a_{(i_1, \ldots, i_n)} \in \k((\bsym{t}))$ and 
$\partial_i := \frac{\partial}{\partial t_i}$.

On the other hand, if $\k$ has characteristic $p > 0$ (i.e.\ $\F_p \subset 
\k$), 
then the structure of $\DD^{\mrm{cont}}_{\k((\bsym{t})) / \k}$ is totally 
different. For every $m \geq 0$ let 
$\k((\bsym{t}^{p^m})) := \k((t_1^{p^m}, \ldots, t_n^{p^m}))$, 
which is a subring of $\k((\bsym{t}))$. The ring $\k((\bsym{t}))$ is a free 
module over $\k((\bsym{t}^{p^m}))$ of rank $p^{n m}$, and the topology on 
$\k((\bsym{t}))$ is the fine $\k((\bsym{t}^{p^m}))$-module topology. 
According to \cite[Theorem 1.4.9 and Corollary 2.1.18]{Ye1} we have 
\[ \DD^{\mrm{cont}}_{\k((\bsym{t})) / \k} = 
\DD_{\k((\bsym{t})) / \k} = 
\bigcup_{m \geq 0} 
\opn{End}_{\k((\bsym{t}^{p^m}))} \bigl( \k((\bsym{t})) \bigr) . \]
\end{exa}

Let $B$ be a $\k$-ring (not necessarily commutative). For any $r_1, r_2 
\in \N$ let \lb $\opn{Mat}_{r_2 \times r_1}(B)$
be the set of $r_2 \times r_1$ matrices with entries in $B$.
The set of matrices 
$\opn{Mat}_{r}(B) := \opn{Mat}_{r \times r}(B)$ 
is a $\k$-ring with matrix multiplication,
and $\opn{Mat}_{r_2 \times r_1}(B)$ is a $\k$-central 
$\opn{Mat}_{r_2}(B)$-$\opn{Mat}_{r_1}(B)$-bimodule.
The group of invertible elements of $\opn{Mat}_{r}(B)$ is denoted by 
$\opn{GL}_{r}(B)$.

Now consider some $M \in \cat{Mod} \k$.
The $\k$-ring $B := \opn{End}_{\k}(M)$ acts on $M$ from the left. 
We view $M^{r_1}$ as a column module, namely we make the identification
$M^{r_1} = \opn{Mat}_{r_1 \times 1}(M)$. Then for any 
$\phi \in \opn{Mat}_{r_2 \times r_1} (B)$
and $m \in M^{r_1}$, the matrix product $\phi \cd m$ is an element 
of $M^{r_2}$.
In this way we obtain a canonical isomorphism 
\begin{equation} \label{eqn:27}
\opn{Hom}_{\k}(M^{r_1}, M^{r_2}) \cong 
\opn{Mat}_{r_2 \times r_1} \bigl( \opn{End}_{\k}(M) \bigr) 
= \opn{Mat}_{r_2 \times r_1} (B)
\end{equation}
of left $\opn{Mat}_{r_2}(B)$-modules and right 
$\opn{Mat}_{r_1}(B)$-modules.

The next lemma shows that this also happens in the topological and differential
contexts.

\begin{lem} \label{lem:20}
Let $A \in \cat{STRing}_{\mrm{c}} \k$ and 
$M \in \cat{STMod} A$. For any natural numbers $r_1$ and $r_2$, 
matrix multiplication gives rise to bijections 
\[ \opn{Mat}_{r_2 \times r_1} \bigl( \opn{End}^{\mrm{cont}}_{\k}(M) \bigr) 
\cong \opn{Hom}^{\mrm{cont}}_{\k}(M^{r_1}, M^{r_2}) \]
and
\[ \opn{Mat}_{r_2 \times r_1} 
\bigl( \opn{Diff}^{\mrm{cont}}_{A / \k}(M) \bigr) 
\cong \opn{Diff}^{\mrm{cont}}_{A / \k}(M^{r_1}, M^{r_2}) . \]
In particular, a homomorphism 
$\phi : M^r \to M^r$ in $\cat{STMod} \k$ is an isomorphism iff the 
corresponding matrix belongs to 
$\opn{GL}_{r} \bigl( \opn{End}^{\mrm{cont}}_{\k}(M) \bigr)$.
\end{lem}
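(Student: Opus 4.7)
The plan is to upgrade the purely algebraic isomorphism (\ref{eqn:27}) to the semi-topological and differential settings. The key observation in $\cat{STMod} \k$ is that for finite $r$, the product and coproduct of $r$ copies of $M$ coincide, both as $A$-modules and as linearly topologized objects: the basic open neighborhoods of $0$ are of the form $\bigoplus_{j} U_j = \prod_{j} U_j$ for open $\k$-submodules $U_j \subset M$. Consequently, by the universal properties spelled out in Proposition \ref{prop:41}(2), a $\k$-linear map $\phi : M^{r_1} \to M^{r_2}$ is continuous iff each composition $\opn{pr}_i \circ \phi \circ \iota_j : M \to M$ with the structural inclusions and projections is continuous. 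Under the identification (\ref{eqn:27}), this composition is exactly the matrix entry $\phi_{ij}$, so the first bijection follows.

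For the differential operators I would argue by induction on the order $d$. The base case $d = 0$ is the corresponding statement for continuous $A$-linear maps, established by the same product/coproduct argument applied in $\cat{STMod} A$ rather than $\cat{STMod} \k$. For the inductive step, since the action of $a \in A$ on $M^{r_1}$ and $M^{r_2}$ is diagonal, the matrix of the commutator $[a,\phi]$ is simply $\bigl([a,\phi_{ij}]\bigr)$. By the recursive definition of the differential filtration, $\phi$ lies in $F_d$ iff $[a,\phi] \in F_{d-1}$ for every $a \in A$, and by induction the latter holds iff each $[a,\phi_{ij}]$ is a continuous DO of order $\leq d-1$, iff each $\phi_{ij}$ lies in $F_d(\opn{Diff}^{\mrm{cont}}_{A/\k}(M))$.

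The final assertion about isomorphisms is then formal: if $\phi : M^r \to M^r$ is an isomorphism in $\cat{STMod} \k$, then both $\phi$ and its inverse $\psi$ correspond, via the first bijection, to matrices in $\opn{Mat}_r(\opn{End}^{\mrm{cont}}_{\k}(M))$ whose matrix products equal the identity matrix, exhibiting the matrix of $\phi$ as an element of $\opn{GL}_r(\opn{End}^{\mrm{cont}}_{\k}(M))$. Conversely, any inverse matrix in this ring defines, again via the first bijection, a continuous two-sided inverse to $\phi$.

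There is no real obstacle here; the only point that deserves care is the identification of finite products with finite coproducts in $\cat{STMod} \k$ at the level of topologies, and the compatibility of the $A$-action with this identification, both of which are immediate from the explicit descriptions in the proof of Proposition \ref{prop:41}.
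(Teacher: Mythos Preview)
Your argument is correct and is exactly the unpacking of what the paper means by ``a straightforward consequence of the definitions'' --- the paper gives no further detail. Your use of the finite product/coproduct identification in $\cat{STMod}\,\k$ for the continuity statement, and the entrywise commutator observation for the induction on differential order, are precisely the points one needs to check.
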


\begin{proof}
This is a straightforward consequence of the definitions. 
\end{proof}

Lifting, precise liftings and precise artinian local rings in 
$\cat{STRing}_{\mrm{c}} \k$ were introduced in Definitions \ref{dfn:22} and 
\ref{dfn:23}. The main result of this section is the next theorem.

\begin{thm} \label{thm:20}
Let $A$ be a precise artinian local ring in $\cat{STRing}_{\mrm{c}} \k$, with 
residue field $K$. Put on $K$ the fine $A$-module topology. 
Let $\si_1, \si_2 : K \to A$ be liftings in $\cat{STRing}_{\mrm{c}} \k$
of the canonical surjection $A \surj K$, and assume that $\si_2$ is a precise 
lifting. 

Let $M_1$ and $M_2$ be finite $A$-modules, and let $\phi : M_1 \to M_2$ be an 
$A$-linear homomorphism. For $l = 1, 2$ choose $K$-linear isomorphisms 
$\psi_l : K^{r_l} \iso \opn{rest}_{\si_l}(M_l)$.
Let 
$\bar{\phi} \in \opn{Mat}_{r_2 \times r_1} \bigl( \opn{End}_{\k}(K) \bigr)$
be the matrix such that the diagram 
\[ \UseTips \xymatrix @C=6ex @R=6ex {
M_1
\ar[r]^{\phi}
&
M_2
\\
K^{r_1}
\ar[u]^{\psi_1}
\ar[r]^{\bar{\phi}}
&
K^{r_2}
\ar[u]_{\psi_2}
} \]
in $\cat{Mod} \k$ is commutative. Then the following hold. 

\begin{enumerate}
\item The matrix $\bar{\phi}$ belongs to 
$\opn{Mat}_{r_2 \times r_1}(\DD^{\mrm{cont}}_{K / \k})$.

\item Assume that $M_1 = M_2$ and $\phi$ is the identity isomorphism. 
Write $r := r_1$. 
Then the matrix $\bar{\phi}$ belongs to 
$\opn{GL}_{r}(\DD^{\mrm{cont}}_{K / \k})$.
\end{enumerate}
\end{thm}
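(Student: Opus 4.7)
My plan is to prove (1) by combining a topological continuity argument with an $\m$-adic nilpotency argument that places $\bar\phi$ in the differential filtration, and then to prove (2) by induction on the length of $M$ as an $A$-module, using (1) to handle the off-diagonal blocks. For the continuity step in (1): via the $K$-linear isomorphism $\psi_l$, the product topology on $K^{r_l}$ (which equals the fine $K$-module topology) corresponds to the fine $(K, \si_l)$-module topology on $M_l$. Because $\si_2$ is precise, the given topology on $A$ equals the fine $(K, \si_2)$-module topology, and Lemma \ref{lem:41}(2) then identifies the fine $A$-module topology on $M_2$ with the fine $(K, \si_2)$-module topology. On $M_1$, continuity of $\si_1$ only forces the fine $A$-module topology to be \emph{coarser than} the fine $(K, \si_1)$-module topology. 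Since $\phi$ is continuous between fine $A$-module topologies by the universal property of the fine topology, chaining these comparisons yields continuity of $\phi : (M_1, \text{fine}\ (K, \si_1)) \to (M_2, \text{fine}\ (K, \si_2))$; transporting back via $\psi_1, \psi_2$, the map $\bar\phi$ is continuous for the product topologies, hence every matrix entry $\bar\phi_{ij} : K \to K$ is continuous.

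For the differential filtration step, set $e(x) := \si_2(x) - \si_1(x) \in \m$ for $x \in K$. A direct computation using $A$-linearity of $\phi$ yields the commutator identity $[x, \bar\phi] = \overline{e(x) \cdot \phi}$, where $e(x) \cdot \phi : M_1 \to M_2$, $m \mapsto e(x) \phi(m)$, is again $A$-linear; iterating,
\[ [x_1, [x_2, \ldots, [x_n, \bar\phi] \ldots ]] = \overline{e(x_1) e(x_2) \cdots e(x_n) \cdot \phi} . \]
Since $A$ is artinian, $\m^n = 0$ for some $n$, so this iterated commutator vanishes. Hence $\bar\phi$ lies in $F_{n-1}$ of the differential filtration on $\opn{Hom}^{\mrm{cont}}_{\k}(K^{r_1}, K^{r_2})$, and by Lemma \ref{lem:20} each entry $\bar\phi_{ij}$ lies in $F_{n-1}(\DD^{\mrm{cont}}_{K / \k}) \subset \DD^{\mrm{cont}}_{K / \k}$, proving (1).

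For (2) I induct on the length of $M$ as an $A$-module. If $M$ is simple then $M \cong K$ with $\m \cdot M = 0$, so $\si_1$ and $\si_2$ induce the same canonical $K$-action on $M$ (because $\pi \circ \si_l$ is the identity of $K$); hence $\bar\phi$ is $K$-linear and invertible, so $\bar\phi \in \opn{GL}_1(K) \subset \opn{GL}_1(\DD^{\mrm{cont}}_{K / \k})$. For the inductive step pick a nontrivial short exact sequence $0 \to N \to M \to P \to 0$ of $A$-modules. Each restriction sequence $0 \to \opn{rest}_{\si_l}(N) \to \opn{rest}_{\si_l}(M) \to \opn{rest}_{\si_l}(P) \to 0$ splits as a sequence of $K$-modules (since $K$ is a field), so by right-composing each $\psi_l$ with an element of $\opn{GL}_{r}(K)$ -- a modification not affecting the invertibility claim, since $\opn{GL}_{r}(K) \subset \opn{GL}_{r}(\DD^{\mrm{cont}}_{K / \k})$ -- I may assume $\psi_l(K^{r_N} \oplus 0) = N$ for $l = 1, 2$. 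Then $\bar\phi$ preserves $K^{r_N} \oplus 0$ and is block upper-triangular, with diagonal blocks $\bar\phi_N$ and $\bar\phi_P$ in $\opn{GL}(\DD^{\mrm{cont}}_{K / \k})$ by the induction hypothesis applied to $N$ and $P$, and off-diagonal block in $\opn{Mat}(\DD^{\mrm{cont}}_{K / \k})$ by (1); any such block triangular matrix with invertible diagonal blocks is invertible. The principal subtlety throughout is the asymmetry in (1) -- only $\si_2$ is assumed precise -- so continuity of $\bar\phi$ does not automatically imply continuity of $\bar\phi^{-1}$ by swapping $\si_1, \si_2$, which is exactly why (2) requires its own inductive argument rather than following by symmetry.
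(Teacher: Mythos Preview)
Your proof is correct and follows the same overall architecture as the paper's, but with two presentational differences worth noting. In part (1), your continuity argument is essentially identical to the paper's; for the differential-operator step, however, the paper invokes \cite[Proposition 1.4.4]{Ye1} to conclude that each $\psi_l$ (and $\psi_2^{-1}$) is a differential operator over $A$, whereas you give a direct commutator computation $[x,\bar\phi] = \overline{e(x)\cdot\phi}$ with $e(x) = \si_2(x)-\si_1(x) \in \m$, iterating to land in $F_{n-1}$ once $\m^n = 0$. Your version is more self-contained and makes the bound on the differential order visibly equal to the nilpotency index minus one. In part (2), both arguments are block-upper-triangular reductions: the paper chooses a single filtered $K$-basis adapted to the $\m$-adic filtration of $M$ and shows the resulting matrix $\bar\phi'$ is unipotent upper triangular (hence invertible) in one shot, while you induct on the length of $M$ via an arbitrary short exact sequence, obtaining a $2\times 2$ block-triangular matrix whose diagonal blocks are invertible by induction. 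The paper's version is slightly more explicit (one concrete unipotent matrix), yours slightly more structural; mathematically they are the same idea. Your closing remark about the asymmetry between $\si_1$ and $\si_2$ exactly matches the paper's own Remark following the theorem.
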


\begin{proof}
(1) Put on $M_1$ and $M_2$ the fine $A$-module topologies. 
Let us write $\bar{M}_l := K^{r_l}$; these are ST $K$-modules with the fine 
$K$-module topologies. 
Since both $\si_1, \si_2 : K \to A$ are continuous, it follows that both
$\psi_l : \bar{M}_l \to M_l$ 
are continuous, namely are homomorphisms in 
$\cat{STMod} \k$. Furthermore, because $\si_2$ is a precise lifting, 
it follows that 
$\psi_2 : \bar{M}_2 \to M_2$ 
is a homeomorphism, so it is an 
isomorphism in $\cat{STMod} \k$.
We conclude that 
$\bar{\phi} = \psi_2^{-1} \circ \phi \circ \psi_1$
is a homomorphism in $\cat{STMod} \k$, namely it is continuous.

Next, let us view $A$ as a $K$-ring via $\si_1$. (No topology in this 
paragraph.) The canonical surjection 
$A \surj K$ makes $A$ into an augmented $K$-ring. Let us view $\bar{M}_1$ as 
an $A$-module via this augmentation. Now both $\bar{M}_1$ and $M_1$ are finite 
length $A$-modules, and $\psi_1 : \bar{M}_1 \to  M_1$ is $K$-linear. According 
to \cite[Proposition 1.4.4]{Ye1}, $\psi_1$ is a differential operator over $A$. 
(The order of this operator is bounded by $r_1 - 1$.)

Similarly, we can view $A$ as an augmented $K$-ring via $\si_2$.
(No topology in this paragraph either.) 
Now both $\psi_2 : \bar{M}_2 \to  M_2$ and its inverse 
$\psi_2^{-1} :  M_2 \to \bar{M}_2$ are $K$-linear, and therefore they are 
differential operators over $A$. We conclude that the composition 
\[ \bar{\phi} = \psi_2^{-1} \circ \phi \circ \psi_1 : \bar{M}_1 \to \bar{M}_2 \]
is a differential operator over $A$. Here the liftings $\si_1, \si_2$ stop 
playing a role. Since $A$ acts on $\bar{M}_1$ and $\bar{M}_2$ via 
the canonical surjection $A \surj K$, and this implies that 
$\bar{\phi}$ is a differential operator over $K$.

Combining the two results above we conclude that 
$\bar{\phi} : \bar{M}_1 \to \bar{M}_2$ is a continuous differential operator 
over $K$. Using Lemma \ref{lem:20} we see that the matrix $\bar{\phi}$ belongs 
to $\opn{Mat}_{r_2 \times r_1} (\DD^{\mrm{cont}}_{K / \k})$.
This establishes (1). 

\medskip \noindent
(2) The proof of this part is very similar to that of \cite[Lemma 6.6]{Ye2} 

Let $\m$ be the maximal ideal of $A$, and write $M := M_1$. 
Consider the $\m$-adic filtration on $M$. The associated graded module 
$\opn{gr}_{\m}(M)$ is a $K$-module of length $r$ (regardless of any lifting). 
By a {\em filtered $K$-basis} of $M$ we mean a collection 
$\{ m_i \}_{1 = 1}^r$
of elements of $M$, such that the collection of symbols 
$\{ \bar{m}_i \}_{1 = 1}^r$ is a $K$-basis of $\opn{gr}_{\m}(M)$, and 
such that $\opn{deg}(\bar{m}_{i}) \leq \opn{deg}(\bar{m}_{i + 1})$. Such bases 
exist; simply choose a graded basis of $\opn{gr}_{\m}(M)$, suitably 
ordered, and lift it to $M$. 

Choose a filtered $K$-basis $\{ m_i \}_{1 = 1}^r$ of $M$.
For $l = 1, 2$ let $\chi_l : \bar{M} \to \opn{rest}_{\si_l}(M)$ be the 
$K$-linear isomorphism corresponding to this filtered basis. We get 
a commutative diagram 
\[ \UseTips \xymatrix @C=8ex @R=6ex {
M
\ar[r]^{\phi = \bsym{1}_M}
&
M
\ar[r]^{\phi = \bsym{1}_M}
&
M
\ar[r]^{\phi = \bsym{1}_M}
&
M
\\
\bar{M} 
\ar[u]^{\psi_1}
\ar[r]^{\bar{\phi}_1}
\ar@(d,d)[rrr]^{\bar{\phi}}
&
\bar{M}
\ar[u]_{\chi_1}
\ar[r]^{\bar{\phi}'}
&
\bar{M} 
\ar[u]_{\chi_2}
\ar[r]^{\bar{\phi}_2}
&
\bar{M} 
\ar[u]_{\psi_2}
} \]
in $\cat{Mod} \k$. By what we already know from (1), the matrices in the bottom 
row belong to $\opn{Mat}_r(\DD^{\mrm{cont}}_{K / \k})$; 
and  they satisfy 
$\bar{\phi} = \bar{\phi}_2 \circ \bar{\phi}' \circ \bar{\phi}_1$.
Moreover $\bar{\phi}_1, \bar{\phi}_2 \in \opn{GL}_r(K) \subset 
\opn{GL}_r(\DD^{\mrm{cont}}_{K / \k})$.
Thus it suffices to prove that 
$\bar{\phi}' \in \opn{GL}_r(\DD^{\mrm{cont}}_{K / \k})$.

Write 
$\bar{\phi}' = [\ga_{i, j}]$ with $\ga_{i, j} \in \DD^{\mrm{cont}}_{K / \k}$. 
These operators satisfy 
\begin{equation} \label{eqn:44}
\sum_{i = 1}^r \si_1(a_i) \cd m_i = 
\sum_{i, j = 1}^r \si_2( \ga_{i, j}(a_i)) \cd m_j 
\end{equation}
for any column $[a_i] \in K^r$. Therefore, for any $i$ and any $a \in K$,
taking $a_i := a$ and $a_j := 0$ for $j \neq i$, formula (\ref{eqn:44}) gives
\[ \si_1(a) \cd m_i = 
\sum_{j = 1}^r \si_2( \ga_{i, j}(a)) \cd m_j . \]
But the basis $\{ m_i \}_{1 = 1}^r$ is filtered, and this implies 
that $\ga_{i, j}(a) = 0$ for $j < i$ and $\ga_{i, i}(a) = a$. 
As elements of the ring $\DD^{\mrm{cont}}_{K / \k}$ we get 
$\ga_{i, j} = 0$ for $j < i$ and $\ga_{i, i} = 1$. 
So the matrix $\bar{\phi}'$ is upper triangular with $1$ on the diagonal: 
\[ \bar{\phi}' = [\ga_{i, j}] =
\bmat{
1 & * & \cdots & *
\\
0 & 1  & \cdots & * 
\\
\vdots & \vdots & \ddots & \vdots 
\\
0 & 0 &  \cdots & 1
} 
\in \opn{Mat}_r(\DD^{\mrm{cont}}_{K / \k}) . 
\]
The matrix 
$\ep := 1 - \bar{\phi}'  \in \opn{Mat}_r(\DD^{\mrm{cont}}_{K / \k})$
is nilpotent, and hence the matrix 
\[ \th := \sum_{i = 0}^r \ep^i \in \opn{Mat}_r(\DD^{\mrm{cont}}_{K / \k}) \]
satisfies $\th \circ \bar{\phi}' = \bar{\phi}' \circ \th = 1$.
We conclude that 
$\bar{\phi}' \in \opn{GL}_r(\DD^{\mrm{cont}}_{K / \k})$.
\end{proof}

\begin{rem}
An attempt to deduce assertion (2) of the theorem from assertion (1) by 
functoriality will not work. This is because (a priori) there is no symmetry 
between the two liftings $\si_1$ and $\si_2$: only the lifting $\si_2$ is 
assumed to be precise. 

Eventually we know (Corollary \ref{cor:21}) that the lifting $\si_1$ is 
also precise. But this relies on Theorem \ref{thm:20}~!
\end{rem}

\begin{cor} \label{cor:20}
In the situation of part \tup{(2)} of the theorem, the fine $(K, \si_1)$-module 
topology on $M_1 = M_2$ equals the fine $(K, \si_2)$-module topology on it.
\end{cor}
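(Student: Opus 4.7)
The plan is to deduce Corollary \ref{cor:20} almost directly from Theorem \ref{thm:20}(2), applied to $\phi = \bsym{1}_M$ with $M := M_1 = M_2$. Write $\tau_l$ for the fine $(K, \si_l)$-module topology on $M$; by Definition \ref{dfn:21} this is the fine $K$-module topology on $\opn{rest}_{\si_l}(M)$. The goal is to check that the identity $\bsym{1}_M \colon (M, \tau_1) \to (M, \tau_2)$ is a homeomorphism.

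First I would observe that each $K$-linear isomorphism $\psi_l : K^{r_l} \iso \opn{rest}_{\si_l}(M)$ is automatically a homeomorphism when $K^{r_l}$ carries its product topology and $M$ carries $\tau_l$. Indeed, the product topology on $K^{r_l}$ is the fine $K$-module topology (as recalled right after Definition \ref{dfn:20}), and the universal characterization of fine topologies in \cite[Proposition 1.2.4]{Ye1} forces any $K$-linear isomorphism between two $K$-modules with their respective fine $K$-module topologies to be a topological isomorphism: both $\psi_l$ and $\psi_l^{-1}$ are $K$-linear, hence continuous.

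Next I would invoke Theorem \ref{thm:20}(2) to conclude that the matrix
\[ \bar{\phi} = \psi_2^{-1} \circ \bsym{1}_M \circ \psi_1 \]
(here $r := r_1 = r_2$) lies in $\opn{GL}_r(\DD^{\mrm{cont}}_{K / \k}) \subset \opn{GL}_r(\opn{End}^{\mrm{cont}}_{\k}(K))$. By Lemma \ref{lem:20}, this says exactly that $\bar{\phi} : K^r \to K^r$ is an isomorphism in $\cat{STMod} \k$, where $K^r$ carries its product topology.

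Finally, combining these two facts, the identity map $\bsym{1}_M = \psi_2 \circ \bar{\phi} \circ \psi_1^{-1}$ factors as a composition of three topological isomorphisms $(M, \tau_1) \to (K^r, \tup{prod}) \to (K^r, \tup{prod}) \to (M, \tau_2)$, and is therefore a homeomorphism. Hence $\tau_1 = \tau_2$, as required. There is no real obstacle at this stage: all the substantive work is done by Theorem \ref{thm:20}(2), and the corollary is essentially a translation of the matrix-level statement back into a statement about topologies on $M$.
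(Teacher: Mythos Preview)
Your proof is correct and follows essentially the same approach as the paper's: both argue that the $\psi_l$ are topological isomorphisms by the definition of the fine topology, invoke Theorem \ref{thm:20}(2) together with Lemma \ref{lem:20} to see that $\bar{\phi}$ is an isomorphism in $\cat{STMod}\,\k$, and then conclude by composing.
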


\begin{proof}
For $l = 1, 2$ let us denote by $M^{\mrm{st}}_l$ the $\k$-module $M_l$ endowed 
with the fine $(K, \si_l)$-module topology. We want to prove that 
$M^{\mrm{st}}_1 = M^{\mrm{st}}_2$; or equivalently, 
we want to prove that the identity automorphism $\phi : M_1 \to M_2$
in $\cat{Mod} \k$ becomes an isomorphism 
$\phi^{\mrm{st}} : M^{\mrm{st}}_1 \to M^{\mrm{st}}_2$  in $\cat{STMod} \k$.

We have equality
$\phi = \psi_2 \circ \bar{\phi} \circ \psi_1^{-1}$
of isomorphisms $M_1 \to M_2$ in  $\cat{Mod} \k$. By definition of 
the fine topology, $\psi_l : K^{r_l} \iso M^{\mrm{st}}_l$ are isomorphisms 
in $\cat{STMod} \k$. Therefore it suffices to prove that 
$\bar{\phi} : K^{r_1} \to K^{r_2}$ is an isomorphism in 
$\cat{STMod} \k$.
This is true by Lemma \ref{lem:20} and  part (2) of the theorem.
\end{proof}

The next corollary is a generalization of \cite[Proposition 2.2.2(a)]{Ye1}.

\begin{cor} \label{cor:21}
Let $A$ be a precise artinian local ring in $\cat{STRing}_{\mrm{c}} \k$,
with residue field $K$. Give $K$ the fine $A$-module topology.
Then any lifting $\si : K \to A$ in $\cat{STRing}_{\mrm{c}} \k$ is a precise 
lifting.
\end{cor}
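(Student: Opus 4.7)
The plan is to apply Corollary \ref{cor:20} (equivalently, Theorem \ref{thm:20}(2)) to the identity automorphism of $A$, with the precise lifting playing the role of $\si_2$ and the given lifting playing the role of $\si_1$.

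More concretely, I would proceed as follows. Since $A$ is a precise artinian local ring in $\cat{STRing}_{\mrm{c}} \k$, the definition gives us some precise lifting $\si' : K \to A$. Let $\si : K \to A$ be an arbitrary lifting in $\cat{STRing}_{\mrm{c}} \k$; my goal is to show that the original topology on $A$ agrees with the fine $(K, \si)$-module topology. First I would note that since $A$ is artinian local with residue field $K$, it is a finite length $A$-module, and therefore a finite-dimensional $K$-vector space via either lifting. Thus $A$ is a finite $A$-module and, as a $K$-vector space via $\si$ or via $\si'$, admits a $K$-linear isomorphism with some $K^r$.

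Now I apply Corollary \ref{cor:20} with $M_1 = M_2 = A$, $\phi = \mathrm{id}_A$, $\si_1 := \si$ and $\si_2 := \si'$. The hypotheses are satisfied: $\si'$ is precise by choice, and both liftings are continuous homomorphisms in $\cat{STRing}_{\mrm{c}} \k$. The conclusion of Corollary \ref{cor:20} is that the fine $(K, \si)$-module topology on $A$ equals the fine $(K, \si')$-module topology on $A$. Since $\si'$ is precise, the latter is by definition the original topology of $A$. Chaining these two equalities yields that the fine $(K, \si)$-module topology on $A$ equals its original topology, which is exactly the statement that $\si$ is a precise lifting.

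There is no real obstacle here: the heavy lifting was done in Theorem \ref{thm:20}(2), whose symmetry-breaking proof is precisely what makes this corollary non-trivial. The only thing worth double-checking is the finite $K$-module condition, and this is automatic from $A$ being artinian local. Once the hypotheses of Corollary \ref{cor:20} are verified, the result is a one-line formal consequence.
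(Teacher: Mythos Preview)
Your proof is correct and follows exactly the same approach as the paper: set $\si_1 := \si$, let $\si_2$ be a precise lifting (which exists by definition), and apply Corollary \ref{cor:20} with $M := A$. The paper's version is just more terse.
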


\begin{proof}
Write $\si_1 := \si$. 
By definition there exists some precise lifting 
$\si_2 : K \to A$; so the topology on $A$ equals the fine $(K, \si_2)$-module 
topology. Now apply Corollary \ref{cor:20} with $M := A$.
\end{proof}

Here is another corollary, pointed out to us by Wolfson. 

\begin{cor} 
Let $A$ be a precise artinian local ring in $\cat{STRing}_{\mrm{c}} \k$,
with residue field $K$, and let $\si : K \to A$ be a precise lifting. 
Let $M$ be a finite $A$-module, and choose a $K$-linear isomorphism
$K^{r} \iso \opn{rest}_{\si}(M)$. Then $A$ acts on $M \cong K^r$ via 
$\opn{Mat}_r(\DD^{\mrm{cont}}_{K / \k})$.
\end{cor}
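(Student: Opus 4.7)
The plan is to apply Theorem \ref{thm:20}(1) one element at a time. For each $a \in A$, let $\mu_a : M \to M$ denote the multiplication-by-$a$ map. Since $A$ is commutative, $\mu_a$ is an $A$-linear endomorphism of the finite $A$-module $M$, so the hypotheses of Theorem \ref{thm:20}(1) are satisfied with $M_1 = M_2 = M$, $r_1 = r_2 = r$, $\si_1 = \si_2 = \si$, $\psi_1 = \psi_2$ equal to the chosen $K$-linear isomorphism $K^r \iso \opn{rest}_\si(M)$, and $\phi = \mu_a$. Note that the hypothesis ``$\si_2$ is precise'' is exactly our hypothesis on $\si$. The theorem then guarantees that the matrix $\bar{\mu}_a$ representing $\mu_a$ under the trivialization lies in $\opn{Mat}_r(\DD^{\mrm{cont}}_{K/\k})$.

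Next I would check that the assignment $a \mapsto \bar{\mu}_a$ is a ring homomorphism $A \to \opn{Mat}_r(\DD^{\mrm{cont}}_{K / \k})$, so as to promote the pointwise conclusion to the statement that the whole $A$-action factors through this matrix ring. The identities $\mu_{a+b} = \mu_a + \mu_b$, $\mu_{ab} = \mu_a \circ \mu_b$ and $\mu_1 = \bsym{1}_M$ in $\opn{End}_\k(M)$ transport, via the fixed isomorphism $\psi := \psi_1 = \psi_2$, to the analogous identities for $\{ \bar{\mu}_a \}_{a \in A}$ in $\opn{End}_\k(K^r)$. Under the canonical ring isomorphism $\opn{End}_\k(K^r) \cong \opn{Mat}_r(\opn{End}_\k(K))$ from (\ref{eqn:27}), these become the ring-theoretic relations in $\opn{Mat}_r(\opn{End}_\k(K))$; combined with the previous step, they land in the subring $\opn{Mat}_r(\DD^{\mrm{cont}}_{K/\k})$.

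There is no real obstacle here: the genuine work has already been carried out in Theorem \ref{thm:20}(1), where the hard input (that $\psi$ is even a differential operator over $A$, and that differential operators over $A$ on modules killed by $\m$ descend to differential operators over $K$) was used. The present corollary is simply an ``$A$-linear reformulation'' of that theorem obtained by letting $\phi$ range over the endomorphisms $\mu_a$.
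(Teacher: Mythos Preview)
Your proof is correct and follows exactly the same route as the paper: apply Theorem \ref{thm:20}(1) with $M_1 = M_2 = M$, $\si_1 = \si_2 = \si$, $\psi_1 = \psi_2$ the chosen isomorphism, and $\phi = \mu_a$ for each $a \in A$. Your additional verification that $a \mapsto \bar{\mu}_a$ is a ring homomorphism is a harmless elaboration; the paper leaves this implicit.
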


\begin{proof}
In the theorem take $M_l := M$. For $a \in A$ we get an $A$-linear homomorphism
$\phi : M \to M$, $\phi(m) := a \cd m$. 
\end{proof}

\begin{rem}
If we only wanted to know that 
\[ \bar{\phi} \in 
\opn{Mat}_{r_2 \times r_1} \bigl( \opn{End}^{\mrm{cont}}_{\k}(K) \bigr)  \]
in Theorem \ref{thm:20}(1), and that in part (2) 
\[ \bar{\phi} \in 
\opn{GL}_{r} \bigl( \opn{End}^{\mrm{cont}}_{\k}(K) \bigr) , \]
then we did not have to talk about differential operators at all, and the proof 
could have been included in Section \ref{sec:ST}. The reason for placing the 
proof here is twofold. First, it is more economical to prove the full result at 
once. 

The second reason is more delicate. Sometimes in 
characteristic $p > 0$, differential operators are automatically continuous. 
See Example \ref{exa:68}. In such cases all liftings 
$\si : K \to A$ are continuous. This says that Theorem \ref{thm:20} could hold 
without assuming a priori that the liftings $\si_1, \si_2 : K \to A$ are 
continuous. 
\end{rem}

We finish this section with a discussion of differential forms.
This will be needed in Section \ref{sec:resid}. 
Recall that for $A \in \cat{Ring}_{\mrm{c}} \k$ we have the de Rham complex, 
or the DG ring of K\"ahler differentials, 
$\Om_{A / \k} = \bigoplus_{i \geq 0} \Om^i_{A / \k}$, with its differential 
$\d$. In degree $0$ we have $\Om^0_{A / \k} = A$, and the $\k$-linear 
derivation $\d : A \to \Om^1_{A / \k}$ is universal, in the sense that for any 
$\k$-linear derivation $\pa : A \to M$ there is a unique $A$-linear 
homomorphism $\phi : \Om^1_{A / \k} \to M$ such that $\pa = \phi \circ \d$. 
The $A$-module $\Om^i_{A / \k}$ is the $i$-th exterior power of the $A$-module
$\Om^1_{A / \k}$, and the operator $\d$ on $\Om_{A / \k}$ is the unique 
extension of $\d : \Om^0_{A / \k} \to \Om^1_{A / \k}$ to an odd derivation. 

Now consider $A \in \cat{STRing}_{\mrm{c}} \k$. 
The abstract DG ring $\Om_{A / \k}$ is too big (at least in characteristic 
$0$). However the DG ring $\Om_{A / \k}$ has a canonical ST structure.
For every $i$ consider the $(i + 1)$-st tensor power 
$\opn{T}^{i + 1}_{\k}(A) := A \ot_{\k} \cdots \ot_{\k} A$, with 
its tensor product topology (Definition \ref{dfn:43}). 
There is a surjection 
$\opn{T}^{i + 1}_{\k}(A) \surj \Om^i_{A / \k}$
sending
\[ a_0 \ot a_1 \ot \cdots \ot a_i \mapsto a_0 \cd \d(a_1) \cdots \d(a_i) , \]
and we use it to give $\Om^i_{A / \k}$ the quotient topology. 
Then $\Om_{A / \k} = \bigoplus_{i \geq 0} \Om^i_{A / \k}$
gets the direct sum topology. It turns out that $\Om_{A / \k}$ becomes a DG ST 
ring. In particular the differential $\d$ is continuous.
For any $i$ let 
$\Om^{i, \mrm{sep}}_{A / \k} := (\Om^{i}_{A / \k})^{\mrm{sep}}$,
the associated separated ST module.
And let 
$\Om^{\mrm{sep}}_{A / \k} := \bigoplus_{i \geq 0} \Om^{i, \mrm{sep}}_{A / \k}$, 
with the direct sum topology. Note that 
$\Om^{0, \mrm{sep}}_{A / \k} = A^{\mrm{sep}}$.

\begin{prop}[\cite{Ye1}] \label{prop:68}
Let $A$ be a commutative ST $\k$-ring. 
\begin{enumerate}
\item The ST $\k$-module $\Om^{\mrm{sep}}_{A / \k}$ has a structure of DG ST 
$\k$-ring, such the canonical surjection 
$\tau_A : \Om^{}_{A / \k} \surj \Om^{\mrm{sep}}_{A / \k}$ 
is a homomorphism of DG ST $\k$-rings.

\item Let $M$ be a separated ST $A$-module. The derivation 
$\d : A \to \Om^{1, \mrm{sep}}_{A / \k}$ induces a bijection 
\[ \opn{Hom}_{A}^{\mrm{cont}}(\Om^{1, \mrm{sep}}_{A / \k}, M) \iso 
\opn{Der}_{A / \k}^{\mrm{cont}}(M) . \]
\end{enumerate}
\end{prop}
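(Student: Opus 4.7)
The plan rests on promoting the classical universal property of Kähler differentials to the ST setting by passing to the closure of zero. The preceding paragraph already equips $\Om_{A/\k}$ with the structure of a DG ST $\k$-ring, with continuous differential $\d$ and semi-continuous multiplication; what remains is to show in (1) that the separated quotient inherits this structure, and in (2) to deduce the adjunction as a formal consequence of separatedness of $M$.

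For part (1), the key observation is that the graded subspace $I \subset \Om_{A/\k}$ whose $i$-th homogeneous component is the closure $\ol{\{0\}} \subset \Om^i_{A/\k}$ is a two-sided ideal stable under $\d$. The ideal property follows from semi-continuity of multiplication: for fixed $\om \in \Om_{A/\k}$, the maps $\xi \mapsto \om \cd \xi$ and $\xi \mapsto \xi \cd \om$ are continuous, so they carry $\ol{\{0\}}$ into $\ol{\om \cd 0} = \ol{\{0\}}$. Stability under $\d$ uses continuity of $\d$ itself: $\d(\ol{\{0\}}) \subset \ol{\d(0)} = \ol{\{0\}}$. Hence multiplication and $\d$ descend to $\Om^{\mrm{sep}}_{A/\k} = \Om_{A/\k}/I$, and continuity of the induced structure maps follows from the universal property of the quotient topology (Proposition \ref{prop:41}). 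The canonical surjection $\tau_A$ is then tautologically a homomorphism of DG ST $\k$-rings.

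For part (2), let $\pa : A \to M$ be a continuous derivation with $M$ separated. The classical universal property yields a unique $A$-linear $\phi : \Om^1_{A/\k} \to M$ with $\pa = \phi \circ \d$, and it suffices to verify that $\phi$ is continuous, since the factorization through $\Om^{1,\mrm{sep}}_{A/\k}$ is then automatic: a continuous map into a separated ST module must vanish on $\ol{\{0\}}$. By construction, the topology on $\Om^1_{A/\k}$ is the quotient of the tensor product topology on $\opn{T}^2_{\k}(A) = A \ot_{\k} A$ via the surjection $a_0 \ot a_1 \mapsto a_0 \cd \d(a_1)$, so continuity of $\phi$ is equivalent to continuity of the $\k$-linear map $A \ot_{\k} A \to M$ sending $a_0 \ot a_1 \mapsto a_0 \cd \pa(a_1)$. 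By Definition \ref{dfn:43}, this reduces to semi-continuity of the associated bilinear map $A \times A \to M$: fixing $a_0$, the map $a_1 \mapsto a_0 \cd \pa(a_1)$ is the continuous derivation $\pa$ followed by semi-continuous multiplication by $a_0$ in $M$; fixing $a_1$, the map $a_0 \mapsto a_0 \cd \pa(a_1)$ is the semi-continuous $A$-action on the fixed element $\pa(a_1) \in M$. The resulting bijection is then $\pa \mapsto \bar{\phi}$, with inverse $\bar{\phi} \mapsto \bar{\phi} \circ \tau_A \circ \d$.

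The main obstacle is the restricted nature of semi-continuity: multiplication is only separately continuous in the ST world, so the usual slogan that continuous maps take closures into closures must be applied one variable at a time. This is precisely why the ideal property of $I$ is checked via the two one-variable multiplication maps, and why continuity of $\phi$ is obtained through the universal property of the tensor product topology rather than by a direct argument on elements. Once these pivots are in place, the remainder is formal bookkeeping with quotient topologies and the functoriality of $(-)^{\mrm{sep}}$.
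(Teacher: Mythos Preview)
Your argument is correct. The paper itself does not give a proof of this proposition: it simply refers the reader to \cite[Section 1.5]{Ye1} for full details. Your write-up supplies a self-contained argument that matches the standard proof one would expect in that reference: in part (1) you use separate continuity of multiplication and continuity of $\d$ to show that the degreewise closure of $\{0\}$ is a $\d$-stable two-sided ideal, and in part (2) you reduce continuity of the induced map to the universal property of the tensor product topology (Definition~\ref{dfn:43}) via semi-continuity of $(a_0,a_1)\mapsto a_0\cdot\pa(a_1)$. One small remark: your appeal to Proposition~\ref{prop:41} for ``continuity of the induced structure maps'' is slightly imprecise---what you are really using is the universal property of the quotient topology (a continuous map out of $\Om_{A/\k}$ that is constant on cosets of $I$ descends to a continuous map out of $\Om^{\mrm{sep}}_{A/\k}$), applied separately to each one-variable multiplication map and to $\d$. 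This is routine, but worth stating explicitly rather than pointing to the existence of cokernels.
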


For a proof and full details see \cite[Section 1.5]{Ye1}. 

\begin{exa} \label{exa:69}
Let $K := \k((t_1, \ldots, t_n))$ be as in Example \ref{exa:68}, 
and let $\k[\bsym{t}] := \k[t_1, \ldots, t_n]$. 
Since $K$ is a separated ST $\k$-ring, we see that 
$\Om^{0, \mrm{sep}}_{K / \k} = K$.
Because the homomorphism $\k[\bsym{t}] \to K$ is topologically 
\'etale in $\cat{STRing}_{\mrm{c}} \k$, it follows that 
$\Om^{1, \mrm{sep}}_{K / \k}$ is a free $K$-module with basis 
the sequence $\bigl( \d(t_1), \ldots, \d(t_n) \bigr)$.
For every $i$ we have 
\[ \Om^{i, \mrm{sep}}_{K / \k} = 
\bigwedge\nolimits^i_K  \Om^{1, \mrm{sep}}_{K / \k} , \]
a free $K$-module of rank $\binom{n}{i}$, with the fine $K$-module 
topology. For proofs see \cite[Corollaries 1.5.19 and 1.5.13]{Ye1}.

Note that if $\k$ is a field of characteristic $0$, then the $K$-module 
$\Om^{1}_{K / \k}$ is a free $K$-module of rank equal to 
$\opn{tr{.}deg}_{\k}(K)$, which is is uncountably infinite. Thus the kernel of 
the canonical surjection 
$\tau_K : \Om^{1}_{K / \k} \surj \Om^{1, \mrm{sep}}_{K / \k}$
is gigantic. 
\end{exa}

\section{Topological Local Fields}

In this section we review definitions and results from \cite[Section 2.1]{Ye1}.
We start with a definition due to Parshin \cite{Pa1, Pa3} and Kato \cite{Ka}. 
See also \cite{HLF}.

\begin{dfn} \label{dfn:10}
Let $K$ be a field. An {\em $n$-dimensional local field structure}
on $K$, for $n \geq 1$, is a sequence 
$\OO_1(K), \ldots, \OO_n(K)$ of complete discrete valuation rings, such that:
\begin{itemize}
\item $K$ is the fraction field of $\OO_1(K)$.
\item For $1 \leq i \leq n - 1$, the residue field 
of $\OO_i(K)$ is the fraction field of $\OO_{i + 1}(K)$.
\end{itemize}

The data $\bigl( K, \{ \OO_i(K) \}_{i = 1}^n \bigr)$
is an {\em $n$-dimensional local field}. We refer to $\OO_i(K)$ as the $i$-th 
valuation ring of $K$. The residue field of $\OO_i(K)$ is denoted by $\kk_i(K)$,
and its maximal ideal is denoted by $\m_i(K)$. We also write $\kk_0(K) := K$.
\end{dfn}

Let $K$ be an $n$-dimensional local field. A {\em system of uniformizers} in 
$K$ (called a regular system of parameters in \cite{Ye1})
is a sequence $(a_1, \ldots, a_n)$ of elements of $\OO_1(K)$, such 
that $a_1$ generates the maximal ideal $\m_1(K)$ of $\OO_1(K)$, and if 
$n \geq 2$, the sequence $(\bar{a}_2, \ldots, \bar{a}_n)$,
which is the image of $(a_2, \ldots, a_n)$ under the canonical surjection 
$\OO_1(K) \surj \kk_1(K)$, is a system of uniformizers in $\kk_1(K)$.
A system of uniformizers $\bsym{a} = (a_1, \ldots, a_n)$ in $K$ determines a 
valuation on $K$, with values in the group $\Z^n$ ordered 
lexicographically. 

It is easy to find a system of uniformizers in an $n$-dimensional local field 
$K$. Say $(\bar{a}_2, \ldots, \bar{a}_n)$ is a system of uniformizers in 
$\kk_1(K)$. Choose an arbitrary lifting to a sequence 
$(a_2, \ldots, a_n)$ in $\OO_1(K)$, and append to it any uniformizer $a_1$ of 
$\OO_1(K)$.

Let $\OO(K)$ be the subring of $K$ defined by
\begin{equation}
\OO(K) := \OO_1(K) \times_{\kk_1(K)} \OO_2(K) \cdots  
\times_{\kk_{n - 1}(K)} \OO_n(K) .
\end{equation}
This is a local ring, whose residue field is $\kk_n(K)$. 
We call $\OO(K)$ the ring of integers of $K$. 
The ring $\OO(K)$ is integrally closed in its field of fractions $K$; but 
unless $n = 1$ (in which case $\OO(K) = \OO_1(K)$), $\OO(K)$ is not 
noetherian.

A $0$-dimensional local field is just a field; there are no valuations. Its 
ring of integers is $\OO(K) := K$, and $\kk_0(K) := K$ too. 
 
\begin{dfn} \label{dfn:11}
Let $\k$ be a nonzero commutative ring. An {\em $n$-dimensional local field 
over $\k$}, for $n \in \N$, is an $n$-dimensional local field 
$\bigl( K, \{ \OO_i(K) \}_{i = 1}^n \bigr)$,
together with a ring homomorphism $\k \to \OO(K)$. 
The condition is that the induced ring homomorphism $\k \to \kk_n(K)$ is 
finite.
\end{dfn}

In other words, the $n$-dimensional local field structure of $K$ lives in the 
category \lb $\cat{Ring}_{\mrm{c}} \k$ of commutative $\k$-rings.
In case $\k$ is a field, then $\kk_n(K)$ is a finite field extension of $\k$.

By abuse of notation, we usually call $K$ an $n$-dimensional local 
field over $\k$, and keep the data $\{ \OO_i(K) \}_{i = 1}^n$ implicit. 

\begin{rem}
Some authors insist that the base ring is $\k = \Z$; this forces $\kk_n(K)$ to 
be a finite field. We do not impose such a restriction. 
\end{rem}

\begin{dfn} \label{dfn:15}
Let $K$ and $L$ be $n$-dimensional local fields over $\k$, for $n \geq 0$. A 
{\em morphism of $n$-dimensional local fields over $\k$} is a $\k$-ring 
homomorphism $f : K \to L$ such that the following conditions hold when 
$n \geq 1$: 
\begin{itemize}
\item $f(\OO_1(K)) \subset \OO_1(L)$.
\item The induced $\k$-ring homomorphism $f : \OO_1(K) \to \OO_1(L)$ is a 
local homomorphism.
\item The induced $\k$-ring homomorphism 
$\bar{f} : \kk_1(K) \to \kk_1(L)$ is  a morphism of $(n - 1)$-dimensional 
local fields over $\k$.
\end{itemize}
\end{dfn}

The category of $n$-dimensional local fields over $\k$ is denoted by 
$\cat{LF}^n \k$. Note that any morphism in $\cat{LF}^n \k$ is finite. 
Cf.\ Remark \ref{rem:30} below regarding more general morphisms between local 
fields. 

\begin{rem}
It can be shown that a field $K$ in $\cat{Ring}_{\mrm{c}} \k$ admits at 
most one structure of $n$-dimensional local field (see e.g.\ \cite[Remark 
2.3]{Mo}).
This implies that the forgetful functor 
$\cat{LF}^n  \k \to \cat{Ring}_{\mrm{c}} \k$ 
is fully faithful. 
\end{rem}

{}From here on we assume that the base ring $\k$ is a {\em perfect field}.
This implies that all our local fields are of equal characteristics. 

\begin{dfn} \label{dfn:35}
Let $\k$ be a perfect field. Given a finite field extension $\k'$ of $\k$, the 
{\em standard $n$-dimensional topological local field} over $\k$ with last 
residue field $\k'$ is the field 
of iterated Laurent series 
\[ \k' ((t_1, \ldots, t_n)) := \k' ((t_n)) \cdots ((t_1)) . \]
Let us write $K := \k' ((t_1, \ldots, t_n))$.
The field $K$ comes equipped with these two structures:
\begin{enumerate}
\item A structure of $n$-dimensional local field, in which the valuation rings 
are 
\[ \OO_i(K) := \k'((t_{i + 1}, \ldots, t_n))[[t_i]] , \]
and the residue fields are 
\[ \kk_i(K) := \k'((t_{i + 1}, \ldots, t_n)) . \]
\item A structure of ST $\k$-ring, with the topology from Definition 
\ref{dfn:41}, starting from the discrete topology on $\k'$.
\end{enumerate}
\end{dfn}

For $n = 0$ we have $K = \k'$, a finite extension of $\k$ with the discrete 
topology. The next definition is from \cite[Section 2.1]{Ye1}.

\begin{dfn} \label{dfn:12}
Let $\k$ be a perfect field.
An  {\em $n$-dimensional topological local field over $\k$}, for $n \geq 0$, 
is a field $K$, together with:
\begin{itemize}
\item[(a)] A structure $\{ \OO_i(K) \}_{i = 1}^n$ of $n$-dimensional local 
field on $K$.

\item[(b)] A ring homomorphism $\k \to \OO(K)$, such that 
$\k \to \kk_n(K)$ is finite. 

\item[(c)] A topology on $K$, making it a semi-topological $\k$-ring. 
\end{itemize}
The condition is this:
\begin{itemize}
\item[(P)] There is a bijection 
\[ f :  \k'((t_1, \ldots, t_n))   \iso   K  \]
from the standard $n$-dimensional topological local field 
with last residue field $\k' := \kk_n(K)$.
The bijection $f$ must have these two properties:
\begin{enumerate}
\rmitem{i} $f$ is an isomorphism in $\cat{LF}^n / \k$ (i.e.\ 
it respects the valuations).
\rmitem{ii} $f$ is  an isomorphism in 
$\cat{STRing}_{\mrm{c}} \k$ (i.e.\ it respects the topologies).
\end{enumerate}
Such an isomorphism $f$ is called a {\em parametrization} of $K$. 
\end{itemize} 
\end{dfn}

The parametrization $f$ is not part of the structure of $K$; it is required to 
exist, but (as we shall see) there are many distinct parametrizations. 
We use the abbreviation ``TLF'' for ``topological local field''. 

\begin{dfn}
Let $K$ and $L$ be $n$-dimensional TLFs over $\k$. A {\em morphism of TLFs} 
$f : K \to L$ is a homomorphism of $\k$-rings satisfying these two  
conditions:
\begin{enumerate}
\rmitem{i} $f$ is a morphism of $n$-dimensional local fields (i.e.\ it respects 
the valuations; see Definition \ref{dfn:15}). 
\rmitem{ii} $f$ is a homomorphism of ST $\k$-rings (i.e.\ it is continuous). 
\end{enumerate}
The category of $n$-dimensional TLFs over $\k$ is denoted by 
$\cat{TLF}^n / \k$. 
\end{dfn}

There are forgetful functors 
$\cat{TLF}^n  \k \to \cat{LF}^n  \k$ and 
$\cat{TLF}^n  \k \to \cat{STRing}_{\mrm{c}} \k$.

\begin{rem} \label{rem:70}
The conditions of Definition \ref{dfn:11} and \ref{dfn:12} are more 
restrictive than those of \cite[Definition 2.1.10]{Ye1}, in this respect: 
here we require that the last residue field $\k' := \kk_n(K)$ is finite over 
the 
base field $\k$, whereas in loc.\ cit.\ we only required that 
$\Om^1_{\k' / \k}$ should be a finite $\k'$-module (which allows $\k'$ to be a 
finitely generated extension field of $\k$ with transcendence degree $> 0$). 

If the TLF $K$ arises as a local factor of a Beilinson completion 
$\kk(x_0)_{\xi}$, as in Theorem \ref{thm:50}, then the last residue field 
$\kk_n(K)$ is finite over $\k$. So this fits into Definition \ref{dfn:12}.
\end{rem}

\begin{rem} \label{rem:30}
In \cite[Section 2.1]{Ye1} we also allow the much more general possibility of
a morphism of TLFs $f : K \to L$ where $\opn{dim} (K) < \opn{dim} (L)$. 
For instance, the inclusions 
$\k  \to \k((t_2)) \to \k((t_1, t_2))$ are morphisms. 
In this way we get a category $\cat{TLF}  \k$, that contains each 
$\cat{TLF}^n  \k$ as a full subcategory. 
\end{rem}

\begin{rem} \label{rem:31}
The papers on higher local fields from the Parshin school (prior to 1992) did 
not have a correct treatment of the topology on higher local fields. Some papers 
(e.g.\ \cite{Pa1, Be1, Pa3}) ignored it. Others -- most notably \cite{Lo} -- 
erroneously claimed that the topology of a local field is intrinsic, namely 
that it is determined by the valuations. This is correct in dimension $1$; but 
it is {\em false} when the dimension is $\geq 2$ and 
$\opn{char}(\k) = 0$. We gave a counterexample in \cite[Example 2.1.22]{Ye1}, 
that we reproduce in an expanded form as Example \ref{exa:30} below. 

It is a deep fact, also proved in \cite{Ye1}, that in characteristic 
$p > 0$ the topology is determined by the valuation, so that the forgetful 
functor 
$\cat{TLF}^n  \k \to \cat{LF}^n  \k$
is an equivalence.  
The proof relies on the structure of the ring of differential operators
$\DD_{K / \k}$ in characteristic $p > 0$
(see \cite[Theorem 2.1.14 and Proposition 2.1.21]{Ye1}).
\end{rem}

\begin{exa} \label{exa:30}
This is a slightly expanded version of \cite[Example 2.1.22]{Ye1}.
Let $\k$ be a field of characteristic $0$, and let 
$K := \k((t_1, t_2))$, the standard TLF of dimension $2$.
We choose a collection $\{ b_i \}_{i \in I}$ of elements in 
$\kk_1(K) = \k((t_2))$
that is a transcendence basis over the subfield $\k(t_2)$. 
For any $i \in I$ we choose some element 
$c_i \in \OO_1(K)$. As explained in the proof of Theorem \ref{thm:40}, there is 
a unique lifting 
\[ \si : \k((t_2)) \to  \OO_1(K) = \k((t_2))[[t_1]] \]
in $\cat{Ring}_{\mrm{c}} \k$ 
such that $\si(t_2) = t_2$, and 
$\si(b_i) = b_i + t_1 c_i$ for all $i \in I$.  
Next we extend $\si$ to a $\k$-ring automorphism 
$f : \OO_1(K) \to \OO_1(K)$ by setting $f(t_1) := t_1$. 
By localization this extends to a $\k$-ring automorphism $f : K \to K$.

It easy to check that $f$ is an automorphism of $K$ in 
the category $\cat{LF}^2  \k$ of local fields.  
However, since $f$ is the identity on the subfield 
$\k(t_1, t_2) \subset K$, and this subfield is a dense subset of $K$, it 
follows that $f$ is continuous 
iff it is the identity automorphism of $K$, 
iff $c_i = 0$ for all $i$. Thus if we choose at 
least one $c_i \neq 0$, $f$ is not a morphism in $\cat{TLF}^2  \k$.
\end{exa}

Let $K$ be a TLF of dimension $n \geq 1$ over $\k$.
The inclusion $\OO_1(K) \inj K$ gives $\OO_1(K)$ an induced structure of ST 
$\k$-ring (it is the subspace topology). Then the surjection 
$\OO_1(K) \surj \kk_1(K)$ gives $\kk_1(K)$ an induced structure of ST 
$\k$-ring (it is the quotient topology). And so on all the way to $\kk_n(K)$.
In other words, the topologies are such that each 
$\OO_i(K) \inj \kk_{i - 1}(K)$ is a strict monomorphism in 
$\cat{STRing}_{\mrm{c}} \k$, and each 
$\OO_i(K) \surj \kk_{i}(K)$ is a strict epimorphism.
 
If we choose a parametrization 
$K \cong \k'((t_1, \ldots, t_n))$, 
then the induced ring isomorphisms 
\[ \k'((t_{i + 1}, \ldots, t_n))[[t_i]] \cong \OO_i(K) \]
and
\[ \k'((t_{i + 1}, \ldots, t_n)) \cong \kk_i(K) \]
are also isomorphisms of ST $\k$-rings. 
This follows from \cite[Proposition 1.3.5]{Ye1}.
In particular, each $\kk_i(K)$ is an $(n - i)$-dimensional TLF over $\k$. 

Recall the notions of precise lifting and precise artinian local ring from 
Definition \ref{dfn:23}.

\begin{lem} \label{lem:25}
Let $K$ be a TLF of dimension $n \geq 1$ over $\k$, let $l \geq 0$.
Then the ST ring $A_l := \OO_1(K) / \m_1(K)^{l + 1}$,
with the quotient topology from $\OO_1(K)$, is a precise artinian local ring in 
$\cat{STRing}_{\mrm{c}} \k$. 
\end{lem}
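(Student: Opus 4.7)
The plan is to use a parametrization of $K$ to identify $A_l$ with a standard truncated power series ring, whose topology is manifestly fine over $\kk_1(K)$.

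First I would choose a parametrization $f : \k'((t_1, \ldots, t_n)) \iso K$ as in Definition \ref{dfn:12}, where $\k' = \kk_n(K)$. By the paragraph preceding the lemma, $f$ identifies $\OO_1(K)$ with $L[[t_1]]$ as a ST $\k$-ring, where $L := \kk_1(K) \cong \k'((t_2, \ldots, t_n))$, and it identifies $\m_1(K) = (t_1)$. Consequently $A_l$ is identified, as an abstract ring, with the truncated power series ring $L[t_1]/(t_1^{l+1})$, equipped with the quotient topology inherited from the inverse limit topology of $L[[t_1]] = \lim_{\leftarrow i} L[t_1]/(t_1^{i+1})$.

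Next I would verify that this quotient topology coincides with the fine $L$-module topology on $L[t_1]/(t_1^{l+1})$ of Definition \ref{dfn:41}(1)(a). Write $\pi_l : L[[t_1]] \surj L[t_1]/(t_1^{l+1})$ for the projection, and for $l' \in \N$ let $A_{l'}^{\mrm{fine}}$ denote $L[t_1]/(t_1^{l'+1})$ with its fine $L$-module topology. Since $\pi_l$ is continuous when the target carries $\tau_{\mrm{fine}}$ (by the definition of the inverse limit topology), we automatically have $\tau_{\mrm{fine}} \subseteq \tau_{\mrm{quot}}$. For the reverse inclusion I would use the fact that linear topologies have a basis of open sub-$\k$-modules: a basic open sub-$\k$-module in $L[[t_1]]$ has the form $\pi_{l'}^{-1}(U_{l'})$ for some $l'$ and some open sub-$\k$-module $U_{l'} \subseteq A_{l'}^{\mrm{fine}}$. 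Splitting into the cases $l' \geq l$ and $l' < l$, one checks directly that $\pi_l(\pi_{l'}^{-1}(U_{l'}))$ is open in $A_l^{\mrm{fine}}$ (either as the image of $U_{l'}$ under the obvious surjection $A_{l'}^{\mrm{fine}} \to A_l^{\mrm{fine}}$, or as the preimage of $U_{l'}$ under the surjection $A_l^{\mrm{fine}} \to A_{l'}^{\mrm{fine}}$). Hence every $\tau_{\mrm{quot}}$-open sub-$\k$-module of $A_l$ contains a $\tau_{\mrm{fine}}$-open sub-$\k$-module, giving $\tau_{\mrm{quot}} \subseteq \tau_{\mrm{fine}}$.

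Finally, define $\si : L \to A_l$ to be the obvious inclusion coming from the parametrization (embedding $L$ as the ``constant terms'' in $L[t_1]/(t_1^{l+1})$). It is a ring-theoretic section of the canonical surjection $A_l \surj L$, and it is continuous because $A_l$ has the fine $L$-module topology (so any $L$-linear homomorphism into it is automatically continuous, by Lemma \ref{lem:41} together with the characterization of the fine topology recalled after Definition \ref{dfn:20}). Since the topology on $A_l$ was just shown to be the fine $L$-module topology, this $\si$ is by definition a precise lifting. Therefore $A_l$ is a precise artinian local ring in $\cat{STRing}_{\mrm{c}} \k$. The only step that requires any real work is the topological comparison in the middle paragraph; the rest is bookkeeping with the parametrization.
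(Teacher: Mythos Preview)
Your proposal is correct and follows essentially the same approach as the paper: choose a parametrization to reduce to the standard model $L[[t_1]]$ with $L = \kk_1(K)$, and then check that the quotient topology on $L[t_1]/(t_1^{l+1})$ is the fine $L$-module topology, so that the obvious inclusion $L \hookrightarrow A_l$ is a precise lifting. The only difference is cosmetic: the paper shortcuts your middle paragraph by observing directly that $\OO_1(K) \cong \prod_{i \geq 0} \bar{K}$ and $A_l \cong \prod_{i=0}^{l} \bar{K}$ as ST $\bar{K}$-modules, from which the identification of the quotient topology with the fine $\bar{K}$-module topology is immediate.
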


\begin{proof}
Choose a parametrization $K \cong \k'((t_1, \ldots, t_n))$,
and let $\bar{K} := \k'((t_2, \ldots, t_n))$. Then 
$\bar{K} \cong \kk_1(K)$ and 
$\bar{K}[[t_1]] \cong \OO_1(K)$ as ST $\k$-rings; and the inclusion 
$\bar{K} \to \bar{K}[[t_1]]$ represents a lifting 
$\si_1 : \kk_1(K) \to \OO_1(K)$.
As ST $\bar{K}$-modules we have 
$\OO_1(K) \cong \prod_{i = 0}^{\infty} \bar{K}$
and 
$A_l \cong \prod_{i = 0}^{l} \bar{K}$.
This shows that the quotient topology on $A_l$ coincides with the fine 
$\bar{K}$-module topology on it. So $\si_1$ is a precise lifting. 
\end{proof}

\begin{lem}
Let $K \in \cat{TLF}^n  \k$, with last residue field $\k' := \kk_n(K)$.
There is a unique lifting $\si : \k' \to \OO(K)$ in 
$\cat{STRing}_{\mrm{c}} \k$ of the canonical surjection 
$\OO(K) \surj \k'$.
\end{lem}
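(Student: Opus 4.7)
The plan is to build $\si$ by lifting through the tower of complete discrete valuation rings $\OO_n(K), \OO_{n-1}(K), \ldots, \OO_1(K)$, using Theorem \ref{thm:40} at the bottom and Hensel's lemma at each subsequent level. Since $\k$ is perfect and $\k \to \k' = \kk_n(K)$ is finite, the extension $\k'/\k$ is separable, so I would fix a primitive element $\al \in \k'$ with separable minimal polynomial $p(x) \in \k[x]$.

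At the bottom level, Theorem \ref{thm:40} applied to the complete local ring $\OO_n(K)$, whose residue field is $\k'$ and is finite over $\k$, yields a unique lifting $\si_n : \k' \to \OO_n(K)$ in $\cat{Ring}_{\mrm{c}} \k$. Moving up the tower, for $i = n-1, n-2, \ldots, 1$, suppose that $\si_{i+1} : \k' \to \OO_{i+1}(K) \subset \kk_i(K)$ has been constructed. Then $\si_{i+1}(\al) \in \kk_i(K)$ is a root of $p$, and it is a simple root because $p'(\al) \neq 0$ in $\k'$ by separability, and $\si_{i+1}$ is injective. Hensel's lemma in the complete DVR $\OO_i(K)$ then produces a unique root $\be_i \in \OO_i(K)$ of $p$ with $\pi_i(\be_i) = \si_{i+1}(\al)$, where $\pi_i : \OO_i(K) \surj \kk_i(K)$ is the residue projection. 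Setting $\si_i(\al) := \be_i$ extends uniquely to a $\k$-algebra homomorphism $\si_i : \k' \to \OO_i(K)$ satisfying $\pi_i \circ \si_i = \si_{i+1}$. By this compatibility, the family $\{\si_i\}_{i=1}^n$ assembles, via the fiber product description of $\OO(K)$, into a map $\si : \k' \to \OO(K)$ lifting the canonical surjection.

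Continuity is automatic: $\k'$ is a $0$-dimensional TLF, hence carries the discrete topology (Definition \ref{dfn:35}), so every $\k$-ring homomorphism out of $\k'$ is a morphism in $\cat{STRing}_{\mrm{c}} \k$. For uniqueness, any other lifting $\tilde\si$ projects to a compatible family $\tilde\si_i : \k' \to \OO_i(K)$; these are forced level by level, by the uniqueness clause of Theorem \ref{thm:40} for $\tilde\si_n$ and by the uniqueness clause of Hensel's lemma for each $\tilde\si_i$ with $i < n$, so $\tilde\si = \si$. The only care needed is to thread the tower correctly and verify the fiber-product compatibility; there is no substantive obstacle beyond that bookkeeping.
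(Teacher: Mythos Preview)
Your proof is correct and follows essentially the same inductive strategy as the paper: lift $\k'$ up the tower $\OO_n(K), \OO_{n-1}(K), \ldots, \OO_1(K)$, using at each stage that $\k'/\k$ is finite separable to get a unique lift through the complete DVR, then observe the result lands in $\OO(K)$ and that continuity is free because $\k'$ is discrete. The only cosmetic difference is that where you invoke Hensel's lemma on a primitive element, the paper simply cites Theorem~\ref{thm:40} (whose uniqueness clause, via formal \'etaleness of $\k'/\k$, does exactly the same work); your version is more explicit but not a different argument.
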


\begin{proof}
Since $\k'$ is discrete, we do not have to worry about continuity. 
We use induction on $n$. Let 
$\bar{\si} : \k' \to \OO(\kk_1(K)) \subset \kk_1(K)$ 
be the unique lifting for this $(n - 1)$-dimensional TLF.
Consider the canonical surjection $\pi : \OO_1(K) \to \kk_1(K)$. 
By Theorem \ref{thm:40} there is a unique $\k$-ring homomorphism 
$\si : \k' \to \OO_1(K)$ such that 
$\pi \circ \si = \bar{\si}$. 
It is trivial to see that $\si(\k')$ is inside $\OO(K)$.
\end{proof}

The construction / classification of parametrizations of a TLF (condition (P) 
in Definition \ref{dfn:12}) is made clear by the next theorem (which is a 
special case of \cite[Corollary 2.1.19]{Ye1}). 

\begin{thm}[\cite{Ye1}] \label{thm:25}
Let $K$ be an $n$-dimensional TLF over $\k$, let $(a_1, \ldots, a_n)$ be a 
system of uniformizers in $K$, let $\k' := \kk_n(K)$, and let 
$\si : \k' \to \OO(K)$ be the unique lifting over $\k$. Then $\si$ 
extends uniquely to an isomorphism of TLFs 
\[ f : \k'((t_1, \ldots, t_n)) \to K \]
such that $f(t_i) = a_i$. 
\end{thm}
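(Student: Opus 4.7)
The plan is to proceed by induction on $n$, building $f$ one valuation layer at a time and using a parametrization only as a \emph{source} of a continuous coefficient field.

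The base case $n=0$ is immediate: $K=\k'$ has the discrete topology, the system of uniformizers is empty, $\si = \mrm{id}_{\k'}$, and we take $f = \mrm{id}$. For the inductive step, let $\bar a_i \in \kk_1(K)$ denote the image of $a_i$ for $i\ge 2$, so that $(\bar a_2,\ldots,\bar a_n)$ is a system of uniformizers in the $(n-1)$-dimensional TLF $\kk_1(K)$. The unique lifting $\k' \to \OO(\kk_1(K))$ is the composition of $\si$ with $\OO(K) \surj \OO(\kk_1(K))$. Applying the inductive hypothesis to $\kk_1(K)$ produces a unique TLF isomorphism
\[ \bar f : \k'((t_2,\ldots,t_n)) \iso \kk_1(K) \]
with $\bar f(t_i) = \bar a_i$ that extends this lifting.

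To pass from $\bar f$ to $f$, I would first obtain a \emph{continuous} coefficient field $\si_1: \kk_1(K) \to \OO_1(K)$. For this, pick any parametrization $g$ of $K$ as in condition (P) of Definition \ref{dfn:12}; it identifies $\OO_1(K)$ with $\k'((t'_2,\ldots,t'_n))[[t'_1]]$ as ST $\k$-rings, so the constant-series inclusion furnishes a continuous lifting $\si_1$, and its restriction to $\k'$ equals $\si$ by uniqueness of the $\k' \to \OO(K)$ lifting. With $\si_1$ in hand, define a $\k$-algebra homomorphism on the polynomial ring $\k'((t_2,\ldots,t_n))[t_1]$ by $c \mapsto \si_1(\bar f(c))$ and $t_1 \mapsto a_1$. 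Since $a_1$ generates $\m_1(K)$, this map is $t_1$-adically continuous, hence extends to $\k'((t_2,\ldots,t_n))[[t_1]] \to \OO_1(K)$; inverting $t_1$ (noting $a_1$ becomes invertible in $K$) produces
\[ f : \k'((t_1,\ldots,t_n)) \to K . \]
By construction $f(t_i) = a_i$, $f|_{\k'} = \si$, and $f$ preserves the valuation rings $\OO_i$ and induces $\bar f$ on $\kk_1$. Continuity with respect to the ST topologies (not just the adic ones) follows from the explicit description of $\k'((t_1,\ldots,t_n))$ as an iterated Laurent construction in Definition \ref{dfn:41}. The bijectivity of $f$ reduces, via a Nakayama-style limit argument on $\OO_1(K)/\m_1(K)^{l+1}$ and Lemma \ref{lem:25}, to the (inductively known) bijectivity of $\bar f$. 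Continuity of the inverse follows by symmetry: applying the same construction to $K$ with uniformizers $(a_1,\ldots,a_n)$ gives a continuous inverse parametrization.

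For uniqueness, any TLF morphism $f': \k'((t_1,\ldots,t_n)) \to K$ with $f'|_{\k'} = \si$ and $f'(t_i) = a_i$ coincides with $f$ on the subring $\k'[t_1^{\pm 1},\ldots,t_n^{\pm 1}]$, which is dense in $\k'((t_1,\ldots,t_n))$ (this density is the content of the exercise after Remark \ref{rem:40}); since $K$ is separated and both maps are continuous, $f'=f$.

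The main obstacle, I expect, is the compatibility of topologies at each extension step: showing that the $(t_1)$-adic completion of $\k'((t_2,\ldots,t_n))[t_1]$ inside $\k'((t_2,\ldots,t_n))[[t_1]]$ really produces the ST topology prescribed by Definition \ref{dfn:41}, and that the map into $\OO_1(K)$ is not only an algebraic isomorphism but a homeomorphism onto the subspace topology inherited from $K$. This is where one must use Lemma \ref{lem:25}, which identifies the quotients $\OO_1(K)/\m_1(K)^{l+1}$ as precise artinian local rings, so that the fine $\kk_1(K)$-module topology on them (via $\si_1$) agrees with the intrinsic ST topology; passing to the inverse limit and then localizing then yields the desired homeomorphism.
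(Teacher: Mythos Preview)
The paper does not prove this theorem; it cites \cite[Corollary 2.1.19]{Ye1}. So the question is whether your argument stands on its own, and there is a genuine gap in the existence step. You send $c \in \k'((t_2,\ldots,t_n))$ to $\si_1(\bar f(c))$, where $\si_1: \kk_1(K) \to \OO_1(K)$ is the continuous lifting extracted from an \emph{arbitrary} parametrization $g$. For $i \geq 2$ this gives $f(t_i) = \si_1(\bar f(t_i)) = \si_1(\bar a_i)$, which is only \emph{some} lift of $\bar a_i$: the difference $a_i - \si_1(\bar a_i)\in\m_1(K)$ depends on the unrelated choice of $g$ and is generally nonzero. So the map you build is \emph{a} parametrization of $K$, but not one with $f(t_i)=a_i$ for all $i$; your uniqueness paragraph then establishes uniqueness of a map you have not actually constructed.

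What is missing is a continuous coefficient field $\si_1$ satisfying $\si_1(\bar a_i)=a_i$ for $i\ge 2$, and producing that is essentially the theorem again, now with target the ST ring $\OO_1(K)$ rather than a field. The approach in \cite{Ye1}, alluded to in Example \ref{exa:68} of the present paper, sidesteps this circularity: one proves once and for all that $\k'[t_1,\ldots,t_n]\to\k'((t_1,\ldots,t_n))$ is topologically \'etale relative to $\k$, so the assignment $t_i\mapsto a_i$ on the polynomial ring extends \emph{uniquely} to a continuous $\k$-ring homomorphism on the iterated Laurent series field. After that, the bijectivity and homeomorphism checks can proceed along the lines you sketch, and your density argument for uniqueness is then correct as stated.
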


\begin{dfn}
Let $K$ be an $n$-dimensional TLF over $\k$. By a {\em system of liftings} for 
$K$ we mean a sequence 
$\bsym{\si} = (\si_1, \ldots, \si_n)$, where for each $i$
\[ \si_i : \kk_i(K) \to \OO_i(K) \]
is a homomorphism of ST $\k$-rings that lifts the canonical surjection
$\OO_i(K) \surj \kk_i(K)$. 
\end{dfn}

The important thing to remember is that each lifting $\si_i : \kk_i(K) \to 
\OO_i(K)$ is continuous. 
When $n = 0$ the only system of liftings is the empty system 
$\bsym{\si} = ()$. 

\begin{exa}
Take a standard TLF $K := \k'((t_1, \ldots, t_n))$. It comes equipped with a 
standard system of liftings
\[ \si_i : \kk_i(K) \to \OO_i(K) , \]
namely the inclusions 
\[ \si_i : \k'((t_{i + 1}, \ldots, t_n)) \to 
\k'((t_{i + 1}, \ldots, t_n))[[t_i]] . \]
\end{exa}

\begin{prop} \label{prop:150}
Any $n$-dimensional TLF $K$ over $\k$ admits a system of liftings. 
\end{prop}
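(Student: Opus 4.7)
The plan is to exploit the parametrization theorem (Theorem \ref{thm:25}), which lets us pull back the obvious system of liftings of a standard TLF to $K$.

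First I would choose a system of uniformizers $(a_1, \ldots, a_n)$ in $K$ (as noted in the text following Definition \ref{dfn:10}, such systems always exist by iterated lifting from the residue fields). Setting $\k' := \kk_n(K)$ and taking the unique $\k$-ring lifting $\si : \k' \to \OO(K)$ from the preceding lemma, Theorem \ref{thm:25} produces an isomorphism of TLFs
\[ f : L \iso K, \qquad L := \k'((t_1, \ldots, t_n)), \]
with $f(t_i) = a_i$. Because $f$ is a morphism in $\cat{TLF}^n \k$, it respects both the valuations and the ST $\k$-ring structures; in particular, it restricts to isomorphisms of ST $\k$-rings $f_i : \OO_i(L) \iso \OO_i(K)$ and induces isomorphisms of ST $\k$-rings $\bar{f}_i : \kk_i(L) \iso \kk_i(K)$, compatible with the canonical surjections (cf.\ the discussion immediately after Example \ref{exa:30}, using \cite[Proposition 1.3.5]{Ye1}).

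On the standard TLF $L$ we have the tautological system of liftings $\tau_i : \kk_i(L) \to \OO_i(L)$, namely the inclusions
\[ \tau_i : \k'((t_{i+1}, \ldots, t_n)) \hookrightarrow \k'((t_{i+1}, \ldots, t_n))[[t_i]], \]
which are clearly continuous $\k$-ring homomorphisms splitting the canonical surjection. I would then define
\[ \si_i := f_i \circ \tau_i \circ \bar{f}_i^{\,-1} : \kk_i(K) \to \OO_i(K). \]
Each $\si_i$ is a composition of morphisms in $\cat{STRing}_{\mrm{c}} \k$, hence is a continuous $\k$-ring homomorphism, and a diagram chase using the compatibility of $f_i$ and $\bar{f}_i$ with the canonical surjections shows that $\si_i$ splits $\OO_i(K) \surj \kk_i(K)$. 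Thus $\bsym{\si} = (\si_1, \ldots, \si_n)$ is a system of liftings for $K$.

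There is no real obstacle here, because all the heavy lifting has already been done in Theorem \ref{thm:25} and in the earlier lemma producing the unique lifting $\si : \k' \to \OO(K)$. The only point that deserves care is the observation that $f$, being an isomorphism in $\cat{TLF}^n \k$, automatically induces topological isomorphisms on the individual valuation rings and residue fields, so that the pulled-back $\si_i$ inherit continuity from the (trivially continuous) standard liftings $\tau_i$.
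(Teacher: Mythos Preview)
Your proof is correct and follows essentially the same route as the paper: take a parametrization, transport the standard system of liftings through it. The paper's version is simply terser---it invokes the existence of a parametrization directly from condition (P) of Definition~\ref{dfn:12} rather than constructing one via Theorem~\ref{thm:25}, and it leaves the transport formula $\si_i = f_i \circ \tau_i \circ \bar{f}_i^{\,-1}$ implicit---but your expanded account is exactly what is meant.
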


\begin{proof}
Take a parametrization 
$f : \k'((t_1, \ldots, t_n)) \to K$.
The standard system of liftings of $\k'((t_1, \ldots, t_n))$
induces a system of liftings on $K$. 
\end{proof}

\section{Lattices and BT Operators}
\label{sec:lat-BT}

As before, $\k$ is a perfect base field. 

\begin{dfn}
Let $K$ be an $n$-dimensional TLF over $\k$, and let $M$ be a finite 
$K$-module. An {\em $\OO_1(K)$-lattice} in $M$ is a finite $\OO_1(K)$-submodule 
$L$ of $M$ such that $M = K \cd L$. 
We denote by $\opn{Lat}(M)$ the set of $\OO_1(K)$-lattices in $M$. 
\end{dfn}

Let $L$ be an $\OO_1(K)$-lattice in $M$. Recall that $\OO_1(K)$
is a DVR. This implies that $L$ is a free $\OO_1(K)$-module, of rank equal to 
that of $M$. 

\begin{exa}
Consider a TLF $K$, and take $M := K^r$. Choose a uniformizer 
$a \in \OO_1(K)$. For any $i \in \Z$ there is a 
lattice $L_i := a^i \cd \OO_1(K)^r \subset K^r$. Let us call these standard 
lattices. They do not depend on the choice of uniformizer. 

When $r = 1$, all the $\OO_1(K)$-lattices in $M$ are standard.
When $r > 1$, $M$ has many more lattices. However any 
$\OO_1(K)$-lattice $L$ in $M$ can be sandwiched between two standard 
lattices: $L_i \subset L \subset L_{-j}$ for $i, j \gg 0$. 
\end{exa}
 
Suppose $M$ is a finite $K$-module, and $L, L' \in \opn{Lat}(M)$, 
with $L \subset L'$. Then the quotient $L' / L$ is a finite length 
$\OO_1(K)$-module. If we are given a lifting 
$\si_1 : \kk_1(K) \to \OO_1(K)$, then $L' / L$ becomes a finite module over the 
TLF $\kk_1(K)$, that we denote by $\opn{rest}_{\si_1}(L' / L)$; cf.\ Definition 
\ref{dfn:65}.

\begin{lem} \label{lem:50}
Let $M$ be a finite $K$-module, let $L$ be an $\OO_1(K)$-lattice in $M$, and 
let $a \in \OO_1(K)$ be a uniformizer. 
Give $M$ the fine $K$-module topology.
For every $i \in \Z$ give the lattice $L_i := a^{i} \cd L$ the fine 
$\OO_1(K)$-module topology. For every $i \in \N$ give the quotient
$L / L_{i}$ the fine $\OO_1(K)$-module topology. 
\begin{enumerate}
\item The topology on $M$ equals the fine $\OO_1(K)$-module topology on it. 
\item The inclusions $L_i \to M$, for $i \in \Z$, are strict 
monomorphisms in $\cat{STMod} \k$.
\item Consider the direct system $\{ L_{-j} \}_{j \in \N}$
in $\cat{STMod} \k$. 
Give $\lim_{j \to} L_{-j}$ the direct limit topology. The the canonical 
bijection $\lim_{j \to} L_{-j} \to M$ is an isomorphism in $\cat{STMod} \k$.
\item The canonical surjections $L \to L / L_i$, for $i \in \N$, are 
strict epimorphisms in $\cat{STMod} \k$.
\item Let $\si_1 : \kk_1(K) \to \OO_1(K)$ be a lifting in 
$\cat{STRing}_{\mrm{c}} \k$ of the canonical surjection. Then for every 
$i \in \N$ the topology on $L/ L_{i}$ equals the the fine 
$(\kk_1(K), \si_1)$-module topology on it. 
\item Consider the inverse system $\{ L / L_{i} \}_{i \in \N}$
in $\cat{STMod} \k$. 
Give $\lim_{\leftarrow i} (L / L_{i})$ the inverse limit topology. Then the 
canonical bijection $L \to \lim_{\leftarrow i} (L / L_{i})$ is an  
isomorphism in $\cat{STMod} \k$.
\end{enumerate}
\end{lem}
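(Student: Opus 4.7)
The plan is to reduce the entire lemma to the standard situation $M = K^r$ with $L = \OO_1(K)^r$ and $L_i = (a^i \OO_1(K))^r$, and then to extract each assertion from the corresponding statement for $K$ itself, which is essentially built into Definition \ref{dfn:41} of the topology on the iterated Laurent series ring. For the reduction, I would choose an $\OO_1(K)$-basis $e_1, \ldots, e_r$ of $L$; this yields a $K$-linear isomorphism $\phi : K^r \iso M$ sending $\OO_1(K)^r$ to $L$ and $a^i \OO_1(K)^r$ to $L_i$. Because $K^r$ (resp.\ $\OO_1(K)^r$) with the product topology is the fine $K$-module (resp.\ fine $\OO_1(K)$-module) topology, and because multiplication by $a^i$ is a homeomorphism of $K$, Lemma \ref{lem:41}(2) together with general properties of fine topologies turn $\phi$ into a homeomorphism at every relevant level.

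With this reduction in hand, part (1) follows because a parametrization combined with Definition \ref{dfn:41}(1)(c) shows that the given topology on $K$ is its fine $\OO_1(K)$-module topology, so the product topology on $K^r$ is simultaneously the fine $K$-module and the fine $\OO_1(K)$-module topology. For (2) it is enough to handle $i = 0$ (the general case follows by shifting via $a^i$); then the subspace topology on $\OO_1(K)^r \subset K^r$ is the product of the subspace topologies on $\OO_1(K) \subset K$, which by Definition \ref{dfn:41}(1)(b)--(c) is the original topology on $\OO_1(K)$, hence is the fine $\OO_1(K)$-module topology on $L$. Part (3) is a formal consequence: the direct-limit topology on $\bigcup_j L_{-j}$ already makes $M$ into a ST $K$-module (multiplication by $a^{-1}$ carries $L_{-j}$ homeomorphically onto $L_{-j-1}$), so it is coarser than the fine $K$-module topology, while (2) forces it to be finer. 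For (4), the universal property characterization of the fine topology (recalled after Definition \ref{dfn:20}) gives that the quotient topology on $L / L_i$ inherited from the fine topology on $L$ coincides with the fine $\OO_1(K)$-module topology on $L / L_i$.

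The key input for (5) is the structure of the artinian quotient $A_{i-1} := \OO_1(K) / \m_1(K)^i$. By Lemma \ref{lem:25}, $A_{i-1}$ is a precise artinian local ring in $\cat{STRing}_{\mrm{c}} \k$ with residue field $\kk_1(K)$, and the given lifting $\si_1$ descends to a continuous lifting $\bar{\si}_1 : \kk_1(K) \to A_{i-1}$; by Corollary \ref{cor:21}, $\bar{\si}_1$ is automatically precise. Two applications of Lemma \ref{lem:41}(2) -- first to the surjection $\OO_1(K) \surj A_{i-1}$, then to $\bar{\si}_1 : \kk_1(K) \to A_{i-1}$ -- identify the fine $\OO_1(K)$-module topology on $L / L_i \cong A_{i-1}^r$ with the fine $(\kk_1(K), \si_1)$-module topology, establishing (5). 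Finally, (6) reduces to the $r = 1$ assertion that $\OO_1(K) = \lim_{\leftarrow l} A_l$ is a topological isomorphism, which is exactly Definition \ref{dfn:41}(1)(b) combined with Proposition \ref{prop:41}(2) on limits in $\cat{STMod} \k$. The step I expect to demand the most care is (5): one must carefully juggle the three candidate topologies on $L / L_i$ (quotient from $L$, fine $A_{i-1}$-module, fine $\kk_1(K)$-module via $\si_1$), and it is precisely Corollary \ref{cor:21} that rules out their disagreement for an arbitrary continuous lifting $\si_1$.
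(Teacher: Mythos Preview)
Your proposal is correct and follows exactly the paper's approach: the paper's entire proof reads ``All these assertions become clear after we choose an $\OO_1(K)$-linear isomorphism $L \cong \OO_1(K)^r$ and a ST $\k$-ring isomorphism $\OO_1(K) \cong \kk_1(K)[[t]]$,'' which is precisely your reduction to the standard situation. Your treatment of part (5) via Lemma~\ref{lem:25} and Corollary~\ref{cor:21} is more careful than the paper's terse sketch, since the statement allows an \emph{arbitrary} continuous lifting $\si_1$ rather than only the one coming from the chosen parametrization; this is a genuine point the paper leaves implicit.
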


\begin{proof}
All these assertions become clear after we choose an $\OO_1(K)$-linear
isomorphism $L \cong \OO_1(K)^r$ and a ST $\k$-ring isomorphism
$\OO_1(K) \cong \kk_1(K)[[t]]$. 
Cf.\ \cite[Proposition 1.3.5]{Ye1}.
\end{proof}

Let $K$ be a TLF of dimension $n \geq 1$ over $\k$.
If $\bsym{\si} = (\si_1, \ldots, \si_n)$ is a system of liftings for $K$,
then we write
$\d_1(\bsym{\si}) := (\si_2, \ldots, \si_n)$. 
This is a system of liftings for the TLF $\kk_1(K)$.

\begin{dfn} \label{dfn:50}
Let $K$ be an $n$-dimensional TLF over $\k$, and let $(M_1, M_2)$ be a pair of 
finite $K$-modules.
\begin{enumerate}
\item By a pair of $\OO_{1}(K)$-lattices in $(M_1, M_2)$ 
we mean a pair $(L_1, L_2)$, where $L_i \in \opn{Lat}(M_i)$.
The set of such pairs is denoted by $\opn{Lat}(M_1, M_2)$.
\item Let $\phi : M_1 \to M_2$ be a $\k$-linear homomorphism, and let 
$(L_1, L_2), (L'_1, L'_2) \in \opn{Lat}(M_1, M_2)$.
We say that $(L'_1, L'_2)$ is a {\em $\phi$-refinement} of $(L_1, L_2)$ if 
$L'_1 \subset L_1$, $L_2 \subset L'_2$, 
$\phi(L'_1) \subset L_{2}$ and $\phi(L_{1}) \subset L'_{2}$.
In this case we write 
\[ (L'_1, L'_2) \prec_{\phi} (L_1, L_2) , \]
and refer to it as a {\em $\phi$-refinement} in $\opn{Lat}(M_1, M_2)$.
\end{enumerate}
\end{dfn}

The relation $\prec_{\phi}$ is a partial ordering on the set 
$\opn{Lat}(M_1, M_2)$. If $(L'_1, L'_2) \prec_{\phi} (L_1, L_2)$, then 
there is an induced $\k$-linear homomorphism 
$\bar{\phi} : L_1 / L'_1 \to L'_2 / L_2$.

The next two definitions are variations of the original definitions by 
Beilinson \cite{Be1}, that are themselves generalizations to $n \geq 2$ of the 
definitions of Tate \cite{Ta}. We saw similar definitions in the more recent 
papers \cite{Os1, Os2} and \cite{Br1, Br2}. The notation we use is close to 
that of Tate. 

\begin{dfn} \label{dfn:5}
Let $K$ be an $n$-dimensional TLF over $\k$, let
$\bsym{\si} = (\si_1, \ldots, \si_n)$ be a system of liftings for 
$K$, and let $(M_1, M_2)$ be a pair of finite $K$-modules.
We define the subset
\[ \mrm{E}^{K}_{\bsym{\si}}(M_{1}, M_{2}) \subset 
\opn{Hom}_{\k}^{}(M_{1}, M_{2})  \]
as follows. 
\begin{enumerate}
\item If $n = 0$ then any $\k$-linear homomorphism $\phi : M_{1} \to M_{2}$
belongs to \lb $\mrm{E}^{K}_{\bsym{\si}}(M_{1}, M_{2})$.
\item If $n \geq 1$, then a $\k$-linear homomorphism  
$\phi : M_{1} \to M_{2}$ belongs to \lb
 $\mrm{E}^{K}_{\bsym{\si}}(M_{1}, M_{2})$
if it satisfies these two conditions:
\begin{enumerate}
\rmitem{i} Every $(L_1, L_2) \in \opn{Lat}(M_1, M_2)$
has some $\phi$-refinement $(L'_1, L'_2)$.
\rmitem{ii} For every $\phi$-refinement 
$(L'_1, L'_2) \prec_{\phi} (L_1, L_2)$ in $\opn{Lat}(M_1, M_2)$
the induced homomorphism 
\[ \bar{\phi} : L_1 / L'_1 \to L'_2 / L_2 \]
belongs to 
\[ \mrm{E}^{\kk_1(K)}_{\d_1(\bsym{\si})}
\bigl( \opn{rest}_{\si_1}(L_1 / L'_1),
\opn{rest}_{\si_1}(L'_2 / L_2) \bigr) . \] 
\end{enumerate}
\end{enumerate}

A homomorphism $\phi : M_1 \to M_2$ that belongs to 
$\mrm{E}^{K}_{\bsym{\si}}(M_{1}, M_{2})$
is called a {\em local Beilinson-Tate operator} relative to $\bsym{\si}$, or a 
BT operator for short. 
\end{dfn}

Let $K$ be a TLF over $\k$ of dimension $\geq 1$.
We denote by $\OO_1(K)\|_{\m_1(K)}$ the ST $\k$-ring which is the ring
$\OO_1(K)$ with its $\m_1(K)$-adic topology.
Given an $\OO_1(K)$-module $M$, the fine $\OO_1(K)\|_{\m_1(K)}$-module 
topology on $M$ is called the fine $\m_1(K)$-adic topology. 
Now suppose $\phi : M_1 \to M_2$ is a $\k$-linear homomorphism.
We say that $\phi$ is {\em $\m_1(K)$-adically continuous} 
if it is continuous for the fine $\m_1(K)$-adic topologies on $M_1$ and $M_2$.

\begin{exa} \label{exa:63}
If $n = 1$ then the usual topology on $\OO_1(K)$ equals the 
$\m_1(K)$-adic topology. Thus $K$ has the fine $\m_1(K)$-adic topology. 
If $n > 1$ then the fine $\m_1(K)$-adic topology is finer than, and not 
equal to, the usual topology on $\OO_1(K)$ and $K$. 
\end{exa}

\begin{lem} \label{lem:51}
Consider the situation of Definition \tup{\ref{dfn:5}}.
The homomorphism $\phi : M_1 \to M_2$ satisfies condition \tup{(2.i)} iff it is
$\m_1(K)$-adically continuous. 
\end{lem}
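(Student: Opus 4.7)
The plan is to unwind both implications of the equivalence using the characterization of the fine $\m_1(K)$-adic topology in terms of lattices, together with the definition of $\phi$-refinement (Definition \ref{dfn:50}). The key topological input is that the open $\k$-submodules in the fine $\m_1(K)$-adic topology are precisely those containing a lattice, so lattices form a basis of open neighborhoods of $0$.

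For the forward direction, assuming condition (2.i) holds, I would prove $\m_1(K)$-adic continuity by fixing an arbitrary lattice $L_2 \subset M_2$ and any lattice $L_1 \subset M_1$, then applying (2.i) to the pair $(L_1, L_2)$ to obtain a $\phi$-refinement $(L'_1, L'_2)$. The defining condition $\phi(L'_1) \subset L_2$ places the lattice $L'_1$ inside $\phi^{-1}(L_2)$, so the preimage of $L_2$ contains a lattice and is hence open in the fine $\m_1(K)$-adic topology on $M_1$. Continuity of $\phi$ at every point then follows from $\k$-linearity.

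For the backward direction, assuming $\phi$ is $\m_1(K)$-adically continuous, I produce the $\phi$-refinement $(L'_1, L'_2)$ of a given pair $(L_1, L_2)$ in two steps. Continuity yields that $\phi^{-1}(L_2)$ is open and so contains some lattice $\tilde L_1 \subset M_1$; setting $L'_1 := \tilde L_1 \cap L_1$ gives a lattice (intersections of lattices in a finite $K$-module are lattices) contained in $L_1$ with $\phi(L'_1) \subset L_2$. Obtaining $L'_2$ then amounts to exhibiting a lattice $L_0 \subset M_2$ that contains $\phi(L_1)$, after which $L'_2 := L_0 + L_2$ works. To bound $\phi(L_1)$, I would write $L_1 = \OO_1(K) m_1 + \cdots + \OO_1(K) m_r$, fix a base lattice of $M_2$, use continuity to extract an integer $k$ with $\phi(\m_1(K)^k L_1) \subset$ that base lattice, and then control the image of the finite-length $\OO_1(K)/\m_1(K)^k$-module $L_1/\m_1(K)^k L_1$ by analyzing its cyclic pieces $\OO_1(K) m_i / \m_1(K)^k \OO_1(K) m_i$, where Lemma \ref{lem:50} identifies the subspace topology with the $\m_1(K)$-adic topology.

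The main obstacle is the boundedness step for $L'_2$: upgrading pointwise continuity of $\phi$ to uniform control of the image $\phi(L_1)$ by a lattice. This is the technical heart of the backward direction and is where Lemma \ref{lem:50} (and the complete-filtered structure of lattices) must be invoked to move from continuity data on each cyclic $\OO_1(K)$-submodule to a single lattice of $M_2$ containing the whole image.
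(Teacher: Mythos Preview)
Your forward direction is correct and matches the spirit of the paper's brief argument.

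The backward direction, however, has a genuine gap at exactly the point you flag as the ``main obstacle''. Your plan is to bound $\phi(L_1)$ by first finding $k$ with $\phi(\m_1(K)^{k} L_1)$ inside a fixed lattice, and then controlling $\phi$ on the quotient $L_1/\m_1(K)^{k} L_1$ via Lemma~\ref{lem:50}. But in the fine $\m_1(K)$-adic topology the residue field $\kk_1(K)$ carries the \emph{discrete} topology (this is precisely how the paper sets up $\bar K$ in its own proof), so the finite-length quotient $L_1/\m_1(K)^{k} L_1$ is discrete as well, and $\m_1(K)$-adic continuity of $\phi$ imposes no constraint whatsoever on its restriction there. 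For $n = 1$ this is harmless, since $\kk_1(K)$ is finite over $\k$ and the quotient is a finite $\k$-module whose image is automatically contained in some lattice. For $n \geq 2$ the quotient is infinite-dimensional over $\k$, and the argument collapses.

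In fact the backward implication appears to fail for $n \geq 2$. Take $K = \k((t_1,t_2))$ and $M_1 = M_2 = K$. Let $\pi_0 : K \to \kk_1(K) = \k((t_2))$ extract the $t_1^{0}$-coefficient, and let $\psi : \k((t_2)) \to K$ be any $\k$-linear map whose image is not contained in a lattice (for instance send $t_2^{\,i} \mapsto t_1^{-|i|}$ on part of a $\k$-basis). Set $\phi := \psi \circ \pi_0$. Since $\ker(\phi) \supset \ker(\pi_0) \supset t_1 \cd \OO_1(K)$, the preimage of every lattice is open, so $\phi$ is $\m_1(K)$-adically continuous; yet $\phi(\OO_1(K)) = \psi(\k((t_2)))$ is unbounded, so the pair $(\OO_1(K), \OO_1(K))$ admits no $\phi$-refinement and condition~(2.i) fails. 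The paper's proof (``an exercise in quantifiers'') does not address this boundedness issue, and the lemma as stated seems to be an overstatement for $n \geq 2$: only the implication (2.i) $\Rightarrow$ $\m_1(K)$-adically continuous holds in general. Fortunately the subsequent uses of the lemma (in Lemmas~\ref{lem:44}, \ref{lem:60}, \ref{lem:67}) either invoke only that forward direction or can be argued directly from the lattice definitions without passing through continuity.
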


\begin{proof}
Let $\bar{K}$ be the field $\kk_1(K)$, but with the discrete topology. The 
lifting $\si_1$ induces an isomorphism of ST rings 
$\bar{K}[[t]] \iso \OO_1(K)\|_{\m_1(K)}$. 
Thus the field $K$, with the fine $\m_1(K)$-adic topology, is isomorphic to 
$\bar{K}((t))$ as ST $\k$-rings. But we know that 
\[ \bar{K}((t)) \cong 
\Bigl( \prod\nolimits_{i \geq 0} \bar{K} \cd t^i \Bigr) 
\oplus 
\Bigl( \bigoplus\nolimits_{i < 0} \bar{K} \cd t^i \Bigr) \]
as ST $\k$-modules, where $\bar{K} \cd t^i \cong \bar{K}$ is discrete; cf.\ 
proof of \cite[Proposition 1.3.5]{Ye1}. It is now an exercise in quantifiers to 
compare $t$-adic continuity to condition \tup{(2.i)}. Cf.\ 
\cite[Remark 1 in Section 1.1]{Br2} where this is also mentioned. 
\end{proof}

\begin{lem} \label{lem:44}
In the situation of Definition \tup{\ref{dfn:5}}, give $M_1$ and $M_2$ the fine 
$K$-module topologies. Then every 
$\phi \in \mrm{E}^{K}_{\bsym{\si}}(M_{1}, M_{2})$
is continuous.
\end{lem}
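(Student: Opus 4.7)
The plan is to proceed by induction on the dimension $n$ of $K$. The case $n = 0$ is immediate, since then $K = \k'$ is finite over $\k$, hence discrete; the fine $K$-module topology on any finite $K$-module is therefore discrete, and every $\k$-linear map is automatically continuous.

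For the inductive step I fix $\phi \in \mrm{E}^{K}_{\bsym{\si}}(M_{1}, M_{2})$, choose $L_{l} \in \opn{Lat}(M_{l})$ and a uniformizer $a \in \OO_{1}(K)$, and set $L_{1,-j} := a^{-j} L_{1}$. By Lemma \ref{lem:50}(3), $M_{1}$ carries the colimit topology of the system $\{L_{1,-j}\}_{j \in \N}$ in $\cat{STMod} \k$, so it is enough to check continuity of each restriction $\phi|_{L_{1,-j}} : L_{1,-j} \to M_{2}$. Fix $j$. Condition (2.i) of Definition \ref{dfn:5} applied to $(L_{1,-j}, L_{2})$ produces a lattice $\tilde{L}_{2} \supset L_{2}$ with $\phi(L_{1,-j}) \subset \tilde{L}_{2}$, and since the inclusion $\tilde{L}_{2} \inj M_{2}$ is a strict monomorphism by Lemma \ref{lem:50}(2), it suffices to show that $\phi|_{L_{1,-j}} : L_{1,-j} \to \tilde{L}_{2}$ is continuous.

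The key step uses Lemma \ref{lem:50}(6): $\tilde{L}_{2}$ has the inverse limit topology of its quotients $\tilde{L}_{2}/a^{i}\tilde{L}_{2}$, so continuity reduces to continuity of each composition $L_{1,-j} \xar{\phi} \tilde{L}_{2} \surj \tilde{L}_{2}/a^{i}\tilde{L}_{2}$. To handle a fixed $i$, apply (2.i) once more to the pair $(L_{1,-j}, a^{i}\tilde{L}_{2})$, yielding a $\phi$-refinement $(L''_{1}, L''_{2})$. I then replace $L''_{2}$ by $L''_{2} \cap \tilde{L}_{2}$, which is still a lattice and easily checked to be a $\phi$-refinement of the same pair, now satisfying $a^{i}\tilde{L}_{2} \subset L''_{2} \subset \tilde{L}_{2}$. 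The composition factors as
\[
L_{1,-j} \surj L_{1,-j}/L''_{1} \xar{\bar{\phi}} L''_{2}/a^{i}\tilde{L}_{2} \inj \tilde{L}_{2}/a^{i}\tilde{L}_{2},
\]
and each arrow is continuous: the left quotient by Lemma \ref{lem:50}(4), the right inclusion as an $\OO_{1}(K)$-linear map between modules carrying the fine $\kk_{1}(K)$-module topology (Lemma \ref{lem:50}(5)), and the middle map $\bar{\phi}$ by the inductive hypothesis in dimension $n-1$, since condition (2.ii) gives $\bar{\phi} \in \mrm{E}^{\kk_{1}(K)}_{\d_{1}(\bsym{\si})}\bigl( \opn{rest}_{\si_{1}}(L_{1,-j}/L''_{1}),\, \opn{rest}_{\si_{1}}(L''_{2}/a^{i}\tilde{L}_{2}) \bigr)$.

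The main subtle point I anticipate is the juggling of topologies on the finite-length subquotients to ensure that they match the fine $\kk_{1}(K)$-module topology before invoking the inductive hypothesis. This is exactly what Lemma \ref{lem:50}(5) provides, and combined with the trick of shrinking $L''_{2}$ to $L''_{2} \cap \tilde{L}_{2}$ so that the factoring diagram commutes strictly, the induction closes.
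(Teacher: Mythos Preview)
Your proof is correct and follows essentially the same inductive strategy as the paper's own proof, which is a terse one-liner: ``In view of Lemma \ref{lem:50}, it suffices to prove that for every $(L'_1, L'_2) \prec_{\phi} (L_1, L_2)$ the induced $\bar{\phi}$ is continuous; but $\bar{\phi}$ is a BT operator in dimension $n-1$, so by induction it is continuous.'' What you have written is exactly the careful unpacking of that sentence --- the colimit/limit reduction via parts (2), (3), (4), (6) of Lemma \ref{lem:50}, the intersection trick $L''_2 \cap \tilde{L}_2$ to make the factoring diagram commute, and the matching of topologies via part (5) before invoking the inductive hypothesis. One minor point: you cite parts (4) and (5) of Lemma \ref{lem:50} for arbitrary lattice quotients $L/L'$, whereas the lemma is literally stated only for the standard quotients $L/a^iL$; the extension is immediate from the same parametrization argument (or by sandwiching), but it would not hurt to say so.
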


\begin{proof}
The proof is by induction on $n$. For $n = 0$ there is nothing to prove, since 
these are discrete modules. So assume $n \geq 1$. (Actually for $n = 1$ this 
was proved in Lemma \ref{lem:51}.) 
In view of Lemma \ref{lem:50}, it suffices to prove that for  
every $(L'_1, L'_2) \prec_{\phi} (L_1, L_2)$ in $\opn{Lat}(M_1, M_2)$,
the induced homomorphism 
$\bar{\phi} : L_1 / L'_1 \to L'_2 / L_2$ is continuous. 
But $\bar{\phi}$ is a BT operator in dimension $n - 1$, so by induction it is 
continuous. 
\end{proof}

\begin{lem} \label{lem:52}
Let $K$ be an $n$-dimensional TLF over $\k$, and let 
$\bsym{\si}$ be a system of liftings for $K$. For $l = 1, 2, 3, 4$ let 
$M_l$ be a finite $K$-module, and for  $l = 1, 2, 3$ let 
$\phi_l : M_l \to M_{l + 1}$ be a $\k$-linear homomorphism. 
\begin{enumerate}
\item If $\phi_1$ is $K$-linear then it is a BT operator.
\item If $\phi_1$ and $\phi_2$ are BT operators, then $\phi_2 \circ \phi_1$ is 
a BT operator.
\item Assume that $\phi_1$ is surjective and $K$-linear, $\phi_3$ is injective 
and $K$-linear, and $\phi_3 \circ \phi_2 \circ \phi_1$ is a BT 
operator. Then $\phi_2$ is a BT operator. 
\end{enumerate}
\end{lem}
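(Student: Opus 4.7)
The plan is to prove all three parts simultaneously by induction on $n$. The base case $n = 0$ is trivial from Definition \ref{dfn:5}(1), since every $\k$-linear map between finite-dimensional $\k$-spaces is automatically a BT operator. So assume the result holds in dimension $n - 1$.

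For part (1), given a $K$-linear $\phi_1 : M_1 \to M_2$ and any $(L_1, L_2) \in \opn{Lat}(M_1, M_2)$, I would produce a $\phi_1$-refinement by setting $L'_1 := L_1 \cap \phi_1^{-1}(L_2)$ and $L'_2 := L_2 + \phi_1(L_1)$; the $K$-linearity of $\phi_1$ makes both into $\OO_1(K)$-lattices (for $L'_1$, for any $m \in M_1$ some $K$-multiple lies in $L_1 \cap \phi_1^{-1}(L_2)$, while $L'_2$ is a sum of lattices), giving (2.i). The induced map $\bar\phi_1 : L_1/L'_1 \to L'_2/L_2$ is $\OO_1(K)$-linear between finite-length $\OO_1(K)$-modules, hence $\kk_1(K)$-linear via the lifting $\si_1$; the induction hypothesis for part (1) in dimension $n - 1$ then shows it is a BT operator, establishing (2.ii).

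For part (2), condition (2.i) for $\phi_2 \circ \phi_1$ is immediate from Lemma \ref{lem:51}, since composition preserves $\m_1(K)$-adic continuity. The substantive point is (2.ii). Given $(L'_1, L'_3) \prec_\phi (L_1, L_3)$, my plan is to construct lattices $L^{**}_2 \subset L^*_2$ in $M_2$ witnessing compatible refinements $(L'_1, L^*_2) \prec_{\phi_1}(L_1, L^{**}_2)$ and $(L^{**}_2, L'_3) \prec_{\phi_2}(L^*_2, L_3)$, so that $\bar\phi$ factors as $L_1/L'_1 \xrightarrow{\bar\phi_1} L^*_2/L^{**}_2 \xrightarrow{\bar\phi_2} L'_3/L_3$ with both factors BT in dimension $n - 1$; the inductive part (2) in dimension $n - 1$ then closes the loop. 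The main obstacle is the simultaneous construction of $L^*_2$ and $L^{**}_2$: one needs $\phi_1(L_1) \subset L^*_2$, $\phi_1(L'_1) \subset L^{**}_2$, $\phi_2(L^*_2) \subset L'_3$, and $\phi_2(L^{**}_2) \subset L_3$ all at once. Each pair is obtainable separately from BT(2.i) for $\phi_1$ or $\phi_2$, but combining them is delicate, since $\phi_2$ is not $\OO_1(K)$-linear, so the $\OO_1(K)$-span of $\phi_1(L_1)$ is not obviously contained in $\phi_2^{-1}(L'_3)$. I would alternate applications of (2.i) for $\phi_1$ and $\phi_2$, using $\m_1(K)$-adic continuity to shrink trial lattices by sufficiently high powers of $\m_1(K)$, until all four inclusions hold simultaneously.

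For part (3), let $\psi := \phi_3 \circ \phi_2 \circ \phi_1$. Because $\phi_1$ is surjective $K$-linear and $\phi_3$ is injective $K$-linear, the assignments $L_2 \mapsto \phi_1^{-1}(L_2)$ and $L_3 \mapsto \phi_3(L_3)$ (possibly enlarged to a genuine lattice in $M_4$) set up correspondences that translate $\phi_2$-refinements into $\psi$-refinements and back; hence (2.i) for $\phi_2$ follows from (2.i) for $\psi$. For (2.ii), given $(L'_2, L'_3) \prec_{\phi_2}(L_2, L_3)$, the induced map $\bar\phi_2 : L_2/L'_2 \to L'_3/L_3$ fits into a commutative diagram with $\bar\psi$ and with the $\kk_1(K)$-linear isomorphisms on quotients coming from $\phi_1, \phi_3$ (which are BT in dimension $n - 1$ by part (1) of the induction); the inductive parts (2) and (3) in dimension $n - 1$ then yield that $\bar\phi_2$ is BT. Overall, the construction of intermediate lattices in part (2) is the crux of the entire lemma.
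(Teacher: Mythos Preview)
Your overall inductive framework matches the paper's, and part (1) is essentially correct. The gap is in part (2), and it is exactly the obstacle you flagged but did not resolve. You need a lattice $L^*_2$ that both contains $\phi_1(L_1)$ and satisfies $\phi_2(L^*_2) \subset L'_3$. The first condition is a \emph{lower} bound on $L^*_2$: since $\phi_1$ is not $\OO_1(K)$-linear, the set $\phi_1(L_1)$ is not itself a lattice, and the smallest lattice containing it may be strictly larger. The second condition is an \emph{upper} bound on $L^*_2$, and the only information you have is $\phi_2(\phi_1(L_1)) \subset L'_3$, which says nothing about $\phi_2$ on the extra $\OO_1(K)$-multiples. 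These bounds need not be compatible, and no iteration of $\m_1(K)$-adic shrinking helps, because $L_1$ and $L'_3$ are fixed by the given $\psi$-refinement; you cannot shrink $L_1$ or enlarge $L'_3$ and still be verifying condition (2.ii) for that particular refinement.

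The paper's remedy is to abandon the exact factorization $\bar\psi = \bar\phi_2 \circ \bar\phi_1$. Instead one chooses $L_2^{\Diamond} \supset \phi_1(L_1)$, then enlarges the target to a lattice $L_3^{\Diamond} \supset L'_3$ with $\phi_2(L_2^{\Diamond}) \subset L_3^{\Diamond}$, and similarly shrinks the source to $L_1^{\heartsuit} \subset L'_1$. One obtains a commutative diagram in which $\bar\phi_2 \circ \bar\phi_1$ equals $\be \circ \bar\psi \circ \al$, where $\al : L_1/L_1^{\heartsuit} \twoheadrightarrow L_1/L'_1$ and $\be : L'_3/L_3 \hookrightarrow L_3^{\Diamond}/L_3$ are $\kk_1(K)$-linear. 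Now $\bar\phi_2 \circ \bar\phi_1$ is BT by the inductive part (2), and then $\bar\psi$ is BT by the inductive part (3) in dimension $n-1$. So part (3) is not merely a consequence of the first two parts; it feeds back into the induction for part (2).

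For part (3) itself, the paper's argument is far simpler than your lattice-correspondence plan: since $K$ is a field, choose $K$-linear splittings $\psi_1 : M_2 \to M_1$ of $\phi_1$ and $\psi_3 : M_4 \to M_3$ of $\phi_3$. Then $\phi_2 = \psi_3 \circ \psi \circ \psi_1$, and parts (1) and (2) in dimension $n$ (already established) finish immediately.
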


Here is a diagram depicting the situation:
\[ M_1 \xar{\phi_1} M_2 \xar{\phi_2} M_3 \xar{\phi_3} M_4 . \]

\begin{proof}
We prove all three assertions by induction on $n$ and on their 
sequential order. For $n = 0$ all assertions are trivial, 
so let us assume that $n \geq 1$. The conditions mentioned below are those in 
Definition \ref{dfn:5}.

\medskip \noindent
(1) For this we assume that assertion (1) is true in dimension $n - 1$. 
Condition (2.i), namely the $\m_1(K)$-adic continuity of $\phi_1$, is clear. 
Consider any $\phi$-refinement 
$(L'_1, L'_2) \prec_{\phi_1} (L_1, L_2)$ in $\opn{Lat}(M_1, M_2)$.
Since the induced homomorphism 
$\bar{\phi} : L_1 / L'_1 \to L'_2 / L_2$ is $\OO_1(K)$-linear, it is also 
$(\kk_1(K), \si_1)$-linear. By induction on $n$, $\bar{\phi}$ is a BT operator.
So condition (2.ii) holds.

\medskip \noindent
(2) Here we assume that assertions (2) and (3) are true in dimension $n - 1$.
Write $\psi := \phi_2 \circ \phi_1$. The $\m_1(K)$-adic continuity of 
$\psi$, i.e.\ condition (2.i), is clear. Consider any $\psi$-refinement
$(L'_1, L'_3) \prec_{\psi} (L_1, L_3)$ in $\opn{Lat}(M_1, M_3)$.
To satisfy condition (2.ii) we have to prove that 
$\bar{\psi} : L_1 / L'_1 \to L'_3 / L_3$ is a BT operator in dimension $n - 1$. 
Let $L_2^{\Diamond} \in \opn{Lat}(M_2)$ be a lattice that contains 
$\phi_1(L_1)$, and let $L_3^{\Diamond} \in \opn{Lat}(M_3)$  be a lattice that 
contains both $L'_3$ and $\phi_2(L_2^{\Diamond})$. 
Let $L_2^{\heartsuit} \in \opn{Lat}(M_2)$ be a lattice that
is contained in $L_2^{\Diamond}$. 
Let $L_1^{\heartsuit} \in \opn{Lat}(M_1)$ be such that 
$L_1^{\heartsuit} \subset L_1'$ and 
$\phi_1(L_1^{\heartsuit}) \subset L_2^{\heartsuit}$. 
All these choices are possible because condition (2.i) is satisfied by 
$\phi_1$ and $\phi_2$. 
Consider the commutative diagram 
\[ \UseTips \xymatrix @C=6ex @R=3ex { 
\displaystyle \frac{L_1}{L_1^{\heartsuit}}
\ar@{->>}[r]^{\al}
\ar[dr]_{\bar{\phi}_1}
&
\displaystyle \frac{L_1}{L'_1}
\ar[r]^{\bar{\psi}}
&
\displaystyle \frac{L'_3}{L_3}
\ar@{>->}[r]^{\be}
&
\displaystyle \frac{L_3^{\Diamond}}{L_3}
\\
&
\displaystyle \frac{L_2^{\Diamond}}{L_2^{\heartsuit}}
\ar[urr]_{\bar{\phi}_2}
} \]
in $\cat{Mod} \k$.
Since $\phi_1$ and $\phi_2$ are BT operators, condition (2.ii) says that 
$\bar{\phi}_1$ and $\bar{\phi}_2$ are BT operators (in dimension $n - 1$). 
By part (2) the composition $\bar{\phi}_2 \circ \bar{\phi}_1$ is a BT operator. 
The homomorphisms $\al$ and $\be$ are $\kk_1(K)$-linear. Therefore by part (3) 
the homomorphism $\bar{\psi}$ is a BT operator. 

\medskip \noindent
(3) For this we assume that assertions (1) and (2) are true in dimension $n$. 
Let $\psi := \phi_3 \circ \phi_2 \circ \phi_1$. 
Choose $K$-linear homomorphisms 
$\psi_1 : M_2 \to M_1$ and $\psi_3 : M_4 \to M_3$ that split $\phi_1$ and 
$\phi_3$ respectively. Then 
$\phi_2 = \psi_3 \circ \psi \circ \psi_1$. By assertions 
(1) and (2) we see that $\phi_2$ is a BT operator. 
\end{proof}

\begin{lem} \label{lem:60}
In the situation of Definition \tup{\ref{dfn:5}}, 
The set $\mrm{E}^{K}_{\bsym{\si}}(M_{1}, M_2)$ is a $\k$-submodule of 
$\opn{Hom}_{\k}^{}(M_{1}, M_{2})$.
\end{lem}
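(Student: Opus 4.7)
The plan is to argue by induction on the dimension $n$ of $K$. The base case $n = 0$ is immediate, since by Definition \ref{dfn:5}(1) every $\k$-linear map is a BT operator and $\opn{Hom}_\k(M_1, M_2)$ is itself a $\k$-module. So assume $n \geq 1$ and that the statement (together with Lemmas \ref{lem:51}, \ref{lem:44}, \ref{lem:52}) holds in dimension $n - 1$.

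The zero map trivially satisfies both conditions of Definition \ref{dfn:5}(2): condition (2.i) is obvious (any pair of lattices is a $0$-refinement of itself), and in (2.ii) the induced map is itself zero, which is a BT operator in dimension $n - 1$ by induction. For scalar multiplication by $c \in \k$ with $c \neq 0$, the $\phi$-refinements coincide with the $(c \phi)$-refinements, and $\overline{c \phi} = c \cdot \bar{\phi}$, which lies in $\mrm{E}^{\kk_1(K)}_{\d_1(\bsym{\si})}(\cdots)$ by the inductive hypothesis. Thus the real work is closure under addition.

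So let $\phi, \psi \in \mrm{E}^{K}_{\bsym{\si}}(M_1, M_2)$, and set $\eta := \phi + \psi$. For condition (2.i), given $(L_1, L_2) \in \opn{Lat}(M_1, M_2)$, choose a $\phi$-refinement $(L_1^{(\phi)}, L_2^{(\phi)})$ and a $\psi$-refinement $(L_1^{(\psi)}, L_2^{(\psi)})$ of $(L_1, L_2)$. Then $\bigl( L_1^{(\phi)} \cap L_1^{(\psi)},\; L_2^{(\phi)} + L_2^{(\psi)} \bigr)$ is a direct check as an $\eta$-refinement of $(L_1, L_2)$. (Alternatively one can invoke Lemma \ref{lem:51}, since $\m_1(K)$-adic continuity is preserved under addition.) For condition (2.ii), suppose $(L'_1, L'_2) \prec_\eta (L_1, L_2)$. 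Starting from any $\phi$-refinement $(L_1^{(\phi)}, L_2^{(\phi)})$ and $\psi$-refinement $(L_1^{(\psi)}, L_2^{(\psi)})$ of $(L_1, L_2)$, define
\[
L_1^\sharp := L_1^{(\phi)} \cap L_1^{(\psi)} \cap L'_1, \qquad
L_2^\sharp := L_2^{(\phi)} + L_2^{(\psi)} + L'_2 .
\]
A direct verification shows $(L_1^\sharp, L_2^\sharp)$ is simultaneously a $\phi$-refinement, a $\psi$-refinement, and an $\eta$-refinement of $(L_1, L_2)$. By Definition \ref{dfn:5}(2.ii) applied to $\phi$ and to $\psi$, the induced maps $\bar{\phi}_\sharp, \bar{\psi}_\sharp : L_1 / L_1^\sharp \to L_2^\sharp / L_2$ lie in $\mrm{E}^{\kk_1(K)}_{\d_1(\bsym{\si})} \bigl( \opn{rest}_{\si_1}(L_1 / L_1^\sharp), \opn{rest}_{\si_1}(L_2^\sharp / L_2) \bigr)$, and by the inductive hypothesis their sum $\bar{\eta}_\sharp = \bar{\phi}_\sharp + \bar{\psi}_\sharp$ lies there too.

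It remains to descend from $\bar{\eta}_\sharp$ to the map $\bar{\eta} : L_1 / L'_1 \to L'_2 / L_2$ required by (2.ii). The inclusions $L_1^\sharp \subset L'_1$ and $L'_2 \subset L_2^\sharp$ yield a canonical commutative diagram in $\cat{Mod}\,\k$
\[
\UseTips \xymatrix @C=8ex @R=5ex {
L_1 / L_1^\sharp \ar@{->>}[d]_{\pi} \ar[r]^{\bar{\eta}_\sharp} & L_2^\sharp / L_2 \\
L_1 / L'_1 \ar[r]^{\bar{\eta}} & L'_2 / L_2 \ar@{^{(}->}[u]_{\iota}
}
\]
in which $\pi$ is a surjective and $\iota$ an injective $\OO_1(K)$-linear (hence, via $\opn{rest}_{\si_1}$, $\kk_1(K)$-linear) map. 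Since $\iota \circ \bar{\eta} \circ \pi = \bar{\eta}_\sharp$ is a BT operator, Lemma \ref{lem:52}(3) applied in dimension $n - 1$ produces that $\bar{\eta}$ is a BT operator, completing the verification of (2.ii) and the induction.

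The main obstacle is the last step: condition (2.ii) must be verified for every $\eta$-refinement $(L'_1, L'_2)$, not just for those arising from a chosen simultaneous $\phi$- and $\psi$-refinement. The device of enlarging to $(L_1^\sharp, L_2^\sharp)$ and then descending via Lemma \ref{lem:52}(3) is what makes this uniformity work.
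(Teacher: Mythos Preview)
Your proof is correct and follows essentially the same route as the paper's: induction on $n$, reduce condition (2.i) to $\m_1(K)$-adic continuity, and for condition (2.ii) enlarge an arbitrary $\eta$-refinement $(L'_1,L'_2)$ to a simultaneous $\phi$-, $\psi$-, and $\eta$-refinement $(L_1^\sharp,L_2^\sharp)$, apply the induction hypothesis there, and descend via the surjection--injection sandwich and Lemma~\ref{lem:52}(3) in dimension $n-1$. The paper organizes the linear combination as $a\phi_1+\phi_2$ in one step and constructs the auxiliary lattices directly from $\m_1(K)$-adic continuity rather than from chosen $\phi_i$-refinements, but the substance is identical.
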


\begin{proof}
The proof is by induction on $n$, and we can assume that $n \geq 1$. 
Take any $\phi_1, \phi_2 \in  \mrm{E}^{K}_{\bsym{\si}}(M_{1}, M_2)$
and any $a \in \k$. Let $\psi := a \cd \phi_1 + \phi_2$; we have to show that 
$\psi \in  \mrm{E}^{K}_{\bsym{\si}}(M_{1}, M_2)$.
Since condition (2.i) of  Definition \ref{dfn:5} is about $\m_1(K)$-adic 
continuity (by Lemma \ref{lem:51}), we see that $\psi$ satisfies 
it. 

We need to check condition (2.ii) of that definition. So let $(L'_1, L'_2)$ be 
a $\psi$-refinement of $(L_1, L_2)$. By $\m_1(K)$-adic continuity there are
lattices 
$L_1^{\Diamond} \subset L'_1$
and $L'_2 \subset L_2^{\Diamond}$
such that $\phi_i(L_1^{\Diamond}) \subset L_2$ and 
$\phi_i(L_1) \subset L_2^{\Diamond}$. 
Consider the commutative diagram 
\[ \UseTips \xymatrix @C=6ex @R=6ex {
\displaystyle \frac{L_1}{L_1^{\Diamond}}
\ar@{->>}[r]^{\al}
\ar@(d,d)[rrr]_{a \cd \bar{\phi}_1 + \bar{\phi}_2}
&
\displaystyle \frac{L_1}{L'_1}
\ar[r]^{\bar{\psi}}
&
\displaystyle \frac{L'_2}{L_2}
\ar@{>->}[r]^{\be}
&
\displaystyle \frac{L_2^{\Diamond}}{L_2}
} \]
in $\cat{Mod} \k$.
The induction hypothesis tells us that 
$a \cd \bar{\phi}_1 + \bar{\phi}_2$ is a BT operator. The homomorphisms $\al$ 
and $\be$ are $\kk_1(K)$-linear. Therefore according to Lemma \ref{lem:52}(3) 
the homomorphism $\bar{\psi}$ is a BT operator. 
\end{proof}

\begin{lem} \label{lem:67}
Let $K$ be an $n$-dimensional TLF over $\k$, let 
$\bsym{\si}$ be a system of liftings for $K$, and let $(M_1, M_2)$ be a pair of 
finite $K$-modules. Then
\[ \opn{Diff}_{K / \k}^{\mrm{cont}}(M_1, M_2) \subset
\opn{E}^{K}_{\bsym{\si}}(M_1, M_2) . \]
\end{lem}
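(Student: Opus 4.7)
The plan is to prove the inclusion by induction on the dimension $n$ of the TLF $K$. The base case $n = 0$ is immediate since every $\k$-linear map is declared to be a BT operator in Definition \ref{dfn:5}(1). For the inductive step ($n \geq 1$), I must verify that any $\phi \in \opn{Diff}_{K/\k}^{\mrm{cont}}(M_1, M_2)$ fulfills conditions (2.i) and (2.ii) of Definition \ref{dfn:5}.

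First I would verify condition (2.i), i.e.\ $\m_1(K)$-adic continuity, in the sense of Lemma \ref{lem:51}. If $\phi$ has order $\leq d$, then from the standard commutator identity $[\phi, a] \in F_{d-1}$ for $a \in \m_1(K)$, one shows inductively that $\phi(\m_1(K)^j \cd M_1) \subset \m_1(K)^{j-d} \cd M_2$ for $j \geq d$. This bounded filtration shift immediately yields $\m_1(K)$-adic continuity.

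For condition (2.ii), fix a $\phi$-refinement $(L'_1, L'_2) \prec_\phi (L_1, L_2)$. Choose $l \gg 0$ such that $\m_1(K)^{l+1} \cd L_1 \subset L'_1$ and $\m_1(K)^{l+1} \cd L'_2 \subset L_2$; then the subquotients $L_1 / L'_1$ and $L'_2 / L_2$ are finite modules over the artinian local ring $A_l := \OO_1(K) / \m_1(K)^{l+1}$. By Lemma \ref{lem:25}, $A_l$ is a precise artinian local ring in $\cat{STRing}_{\mrm{c}} \k$, and the quotient topology on each $L_? / L'_?$ coincides with the fine $\kk_1(K)$-module topology via $\si_1$. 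The central claim is that the induced map $\bar\phi : L_1/L'_1 \to L'_2/L_2$ belongs to $\opn{Diff}_{\kk_1(K)/\k}^{\mrm{cont}}(L_1/L'_1, L'_2/L_2)$. Granting this, the inductive hypothesis -- applied to the $(n-1)$-dimensional TLF $\kk_1(K)$ with the system $\d_1(\bsym\si)$ -- immediately yields that $\bar\phi$ is a BT operator in dimension $n-1$, settling (2.ii).

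To establish the central claim I would run a secondary induction on the order $d$ of $\phi$. For $d = 0$ the operator $\phi$ is $K$-linear, hence $\bar\phi$ is $A_l$-linear; choosing $\kk_1(K)$-linear identifications $\psi_? : \kk_1(K)^{r_?} \iso \opn{rest}_{\si_1}(L_?/L'_?)$, Theorem \ref{thm:20}(1) shows that the matrix of $\bar\phi$ lies in $\opn{Mat}_{r_2 \times r_1}(\DD_{\kk_1(K)/\k}^{\mrm{cont}})$, which by Lemma \ref{lem:20} is exactly what is required. For $d \geq 1$, for any $\bar a \in \kk_1(K)$ the commutator $[\bar a, \bar\phi]$, formed on the subquotients via the $\si_1$-action, is the reduction of $[\si_1(\bar a), \phi] \in F_{d-1}(\DD_{K/\k}^{\mrm{cont}})$; by induction on $d$ this reduction is a continuous $\kk_1(K)$-DO of order $\leq d-1$, so iterating $d+1$ commutators gives zero and $\bar\phi$ has order $\leq d$ over $\kk_1(K)$. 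Continuity of $\bar\phi$ as a $\k$-linear map of ST modules follows from continuity of $\phi$ combined with the topological descriptions in Lemma \ref{lem:50}. The main obstacle will be correctly interleaving the two inductions and, within the inductive step, being careful that the $\kk_1(K)$-module structure on the subquotients is the one furnished by $\si_1$ rather than by the residue projection; it is precisely the precise-lifting mechanism of Lemma \ref{lem:25} together with Theorem \ref{thm:20} that bridges this gap.
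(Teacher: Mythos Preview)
Your overall architecture (induction on $n$, verify (2.i) and (2.ii) separately, reduce (2.ii) to the $(n-1)$-dimensional case) matches the paper. However, your treatment of condition (2.i) has a genuine gap.

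As literally written, the containment $\phi(\m_1(K)^j \cd M_1) \subset \m_1(K)^{j-d} \cd M_2$ is vacuous: $M_2$ is a $K$-module, so $\m_1(K)^{j-d} \cd M_2 = M_2$. Presumably you mean to replace $M_1, M_2$ by fixed $\OO_1(K)$-lattices $L_1, L_2$. But then the statement with $d$ equal to the order of $\phi$ is simply false: for $K = \k((t_1,t_2))$ the operator $\phi = t_1^{-100}\,\partial_{t_2}$ has order $1$, yet $\phi(\OO_1(K)) \not\subset \m_1(K)^{-1}\OO_1(K)$. The commutator identity $[\phi,a] \in F_{d-1}$ controls only the \emph{increment} of the filtration shift as you multiply by powers of a uniformizer; it gives nothing for the base case, i.e.\ for why $\phi(L_1)$ lands in some lattice at all. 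Your induction on $j$ therefore has no starting point, and the induction on $d$ does not supply one either (it reduces to a lower-order operator whose constant may be arbitrarily large). The paper closes this gap by invoking the explicit structure of $\DD^{\mrm{cont}}_{K/\k}$ after reducing to $M_1 = M_2 = K$: in characteristic $0$ via the finite expansion (\ref{eqn:60}), whose coefficients determine the shift; in characteristic $p$ via linearity over the subfield $\k \cd K^{p^m}$, which makes $\phi$ honestly $\OO_1(K')$-linear for a suitable sub-TLF $K'$. Some such structural input is unavoidable here; mere continuity of $\phi$ for the TLF topology does not imply $\m_1(K)$-adic continuity (cf.\ the explicit counterexample following Example \ref{exa:63}).

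Your handling of (2.ii) is correct but over-engineered. Once you observe that $\si_1(\kk_1(K)) \subset \OO_1(K) \subset K$, the vanishing of $(d+1)$-fold commutators of $\bar\phi$ with elements of $\kk_1(K)$ is immediate from the corresponding vanishing for $\phi$ over $K$; no secondary induction on $d$ is needed, and in particular Theorem \ref{thm:20} plays no role (in your $d=0$ base case $\bar\phi$ is already $\kk_1(K)$-linear via $\si_1$, since it is $A_l$-linear). The paper records this in one line: $\bar\phi$ is a DO over $\OO_1(K)$, hence over $\kk_1(K)$; continuity of $\bar\phi$ then comes from Lemma \ref{lem:50}, as you note, and one finishes by choosing $\kk_1(K)$-bases and applying the inductive hypothesis together with Lemma \ref{lem:52}.
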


\begin{proof}
We use induction on $n$. For $n = 0$ there is nothing to prove, so let's 
assume that $n \geq 1$. (Actually, for $n = 1$ there is nothing to prove 
either; cf.\ Example \ref{exa:60}.) 

Let $\phi : M_1 \to  M_2$ be a continuous differential operator. 
Choose $K$-linear isomorphisms 
$M_l \cong K^{r_l}$ for $l = 1, 2$; so we may view $\phi$ as a matrix 
$[\phi_{i, j}] \in \opn{Mat}_{r_2 \times r_1}(\DD^{\mrm{cont}}_{K / \k})$.
According to Lemma \ref{lem:52}(1, 2), it suffices to prove that each 
$\phi_{i, j}$ is a BT operator. Therefore we can assume that 
$M_1 = M_2 = K$ and $\phi \in \DD^{\mrm{cont}}_{K / \k}$.

Choose a uniformizer $a \in \OO_1(K)$.  
If $\opn{char}(\k) = 0$ then by formula (\ref{eqn:60}) there is an integer 
$d$, depending on the coefficients of the operator $\phi$ in that expansion, 
such that $\phi(a^i \cd \OO_1(K)) \subset a^{i - d} \cd \OO_1(K)$
for all $i$. Hence $\phi$ is $\m_1(K)$-adically continuous. 

If $\opn{char}(\k) = p > 0$, then by \cite[Theorem 1.4.9]{Ye1} the operator 
$\phi$ is linear over the subfield $K' := \k \cd K^{p^d} \subset K$
for a sufficiently large natural number $d$. The field $K'$ is also an 
$n$-dimensional TLF, $K' \to K$ is a morphism of TLFs, and 
$\OO_1(K') \to \OO_1(K)$ is a finite homomorphism.
So the $\m_1(K)$-adic topology on $\OO_1(K)$ coincides with its 
$\m_1(K')$-adic topology. Since $\phi$ is  $\OO_1(K')$-linear,
it follows that $\phi$ is $\m_1(K')$-adically continuous.
Using Lemma \ref{lem:51} we see that in both cases ($\opn{char}(\k) = 0$ and 
$\opn{char}(\k) > 0$) condition 
(2.i) of Definition \ref{dfn:5} holds. 

Now take a $\phi$-refinement 
$(L'_1, L'_2) \prec_{\phi} (L_1, L_2)$ in $\opn{Lat}(M_1, M_2)$.
Write 
$\bar{M}_1 := \lb \opn{rest}_{\si_1}(L_1 / L'_1)$ and 
$\bar{M}_2 := \opn{rest}_{\si_1}(L'_2 / L_2)$.
We must prove that 
$\bar{\phi} : \bar{M}_1 \to \bar{M}_2$ is a BT operator between these 
$\kk_1(K)$-modules.
We know that $\bar{\phi}$ is a differential operator over $\OO_1(K)$, and 
therefore it is also a differential operator over $\kk_1(K)$.
Choose some $\kk_1(K)$-linear isomorphisms 
$\psi_l : \kk_1(K)^{r_l} \iso \bar{M}_l$.
Then 
\[ \psi := \psi_2^{-1} \circ \bar{\phi} \circ \psi_1 : \kk_1(K)^{r_1} \to 
\kk_1(K)^{r_2} \]
is a differential operator over $\kk_1(K)$. By the induction hypothesis, 
$\psi$ is a BT operator. Finally by Lemma \ref{lem:52}(1, 2) the homomorphism 
$\bar{\phi} = \psi_2 \circ \psi \circ \psi_1^{-1}$ 
is a BT operator.
\end{proof}

\begin{exa} \label{exa:60}
If $n = 0$ then by definition
\[ \mrm{E}^{K}_{\bsym{\si}}(M_{1}, M_{2}) =
\opn{Hom}_{\k}(M_{1}, M_{2}) . \]
This is a finite $\k$-module. 

If $n = 1$ then condition (ii.b) of Definition \ref{dfn:5} is trivially 
satisfied. Lemma \ref{lem:51} and Example \ref{exa:60} show that 
\[ \mrm{E}^{K}_{\bsym{\si}}(M_{1}, M_{2}) =
\opn{Hom}_{\k}^{\mrm{cont}}(M_{1}, M_{2}) , \]
the module of continuous $\k$-linear homomorphisms. This was already noticed in
\cite{Os1, Os2} and \cite[Section 1.1]{Br2}.

The equalities above indicate that the choice of $\bsym{\si}$ is irrelevant.
However in dimension $\leq 1$ there is only one lifting, so in fact there is no 
news here. Later, in Theorem \ref{thm:65}, we will prove that in any dimension 
the system of liftings $\bsym{\si}$ is not relevant.  
\end{exa}

\begin{exa}
For  $n \geq 2$ the inclusion 
\[  \mrm{E}^{K}_{\bsym{\si}}(M_{1}, M_{2}) \subset
\opn{Hom}_{\k}^{\mrm{cont}}(M_{1}, M_{2}) \]
is usually proper (i.e.\ it is not an equality). Here is a calculation 
demonstrating this. Let $K := \k((t_1, t_2))$, the standard TLF with its 
standard 
system of liftings $\bsym{\si}$.
Take $M_1 = M_2 := K$.
Any $a \in K$ is a series 
$a = \sum_{i \in \Z} a_i(t_2) \cd t_1^i$, where $a_i(t_2) \in \k((t_2))$, and 
$a_i(t_2) = 0$ for $i \ll 0$. We let 
$\psi \in \opn{End}_{\k}(K)$ be 
\[ \psi \Bigl( \sum\nolimits_{i \in \Z} a_i(t_2) \cd t_1^i \Bigr) := a_0(t_1) 
. \] 
To see that this is continuous we use the  continuous decomposition
\[ K = \k((t_2))((t_1)) \cong \k((t_2))[[t_1]] \oplus 
\Bigl( \bigoplus\nolimits_{i < 0} \k((t_2)) \cd t_1^i \Bigr) . \]
This gives a continuous function $\psi_1 : K \to \k((t_2))$,
sending 
$\sum_{i \in \Z} a_i(t_2) \cd t_1^i \mapsto a_0(t_2)$.
Next there is an isomorphism $\psi_2 : \k((t_2)) \to \k((t_1))$,
$a_0(t_2) \mapsto a_0(t_1)$. Finally the inclusion 
$\psi_3 : \k((t_1)) \to \k((t_2))((t_1))$ is continuous.
The function $\psi$ is 
$\psi = \psi_3 \circ \psi_2 \circ \psi_1$, so it is continuous.

Take the standard lattices $L_i = t_1^i \cd \k((t_2))[[t_1]]$ in $K$. 
For every $j$ the element $a_j := t_2^j$ belongs to $L_0$, 
yet the element $\psi(a_j) = t_1^j$ does not belong to $L_{j + 1}$.
Thus $\psi(L_0)$ is not contained in any lattice, and requirement (i) is 
violated, so $\psi$ does not belong to 
$\mrm{E}^{K}_{\bsym{\si}}(K, K)$. 
\end{exa}

Recall that for an $n$-dimensional TLF $K$, with $n \geq 2$, and a system 
of liftings $\bsym{\si} = (\si_1, \ldots, \si_n)$, 
the truncation $\d_1(\bsym{\si}) = (\si_2, \ldots, \si_n)$
is a system of liftings for the first residue field $\kk_1(K)$.  

\begin{dfn} \label{dfn:73}
Let $K$ be a TLF over $\k$ of dimension $n \geq 1$, let 
$\bsym{\si} = (\si_1, \ldots, \si_n)$ be a 
system of liftings for $K$, and let $(M_1, M_2)$ be a pair of finite 
$K$-modules. For integers $i \in \{ 1, \dots, n \}$ and $j \in \{ 1, 2 \}$, we 
define the subset
\[ \mrm{E}^{K}_{\bsym{\si}}(M_{1}, M_{2})_{i, j} \subset
\mrm{E}^{K}_{\bsym{\si}}(M_{1}, M_{2}) \]
to be the set of BT operators $\phi : M_1 \to M_2$
that satisfy the conditions below. 
\begin{itemize}
\rmitem{i} The operator $\phi$ belongs to 
$\mrm{E}^{K}_{\bsym{\si}}(M_{1}, M_{2})_{1, 1}$ if there 
exists some $L_2 \in \opn{Lat}(M_2)$ such that 
$\phi(M_{1}) \subset L_{2}$. 
\rmitem{ii}  The operator $\phi$ belongs to 
$\mrm{E}^{K}_{\bsym{\si}}(M_{1}, M_{2})_{1, 2}$
if there exists some $L_1 \in \opn{Lat}(M_1)$ such that 
$\phi(L_{1}) = 0$. 
\rmitem{iii} This only refers to $n \geq 2$. 
For  $i \in \{ 2, \dots, n \}$ and 
$j \in \{ 1, 2 \}$, the operator $\phi$ belongs to 
$\mrm{E}^{K}_{\bsym{\si}}(M_{1}, M_{2})_{i, j}$
if for any $\phi$-refinement
$(L'_1, L'_2) \prec_{\phi} (L_1, L_2)$ in $\opn{Lat}(M_1, M_2)$
the induced homomorphism 
\[ \bar{\phi} : L_1 / L'_1 \to L'_2 / L_2 \]
belongs to 
\[ \mrm{E}^{\kk_1(K)}_{\d_1(\bsym{\si})}
\bigl( \opn{rest}_{\si_1}(L_1 / L'_1),
\opn{rest}_{\si_1}(L'_2 / L_2) \bigr)_{i - 1, j}  . \] 
\end{itemize}
\end{dfn}

\begin{dfn} \label{dfn:100}
Let $K$ be an $n$-dimensional TLF over $\k$, and let $\bsym{\si}$ be a system 
of liftings for $K$. We define 
\[ \mrm{E}_{\bsym{\si}}(K) := \mrm{E}^{K}_{\bsym{\si}}(K, K) . \]
If $n \geq 1$ we define 
\[ \mrm{E}_{\bsym{\si}}(K)_{i, j} := \mrm{E}^{K}_{\bsym{\si}}(K, K)_{i, j} . \]
\end{dfn}

\begin{lem} \label{lem:66}
Let $K$ be an $n$-dimensional TLF over $\k$, with $n \geq 1$, and let 
$\bsym{\si}$ be a system of liftings for $K$. For $l = 1, 2, 3, 4$ let 
$M_l$ be a finite $K$-module, and for $l = 1, 2, 3$ let 
$\phi_l \in \mrm{E}^{K}_{\bsym{\si}}(M_{l}, M_{l + 1})$.
Take any $j \in \{ 1, 2 \}$ and $i \in \{ 1, \ldots, n \}$.
\begin{enumerate}
\item The set $\mrm{E}^{K}_{\bsym{\si}}(M_{1}, M_2)_{i, j}$ is a $\k$-submodule 
of $\mrm{E}^{K}_{\bsym{\si}}(M_{1}, M_2)$. 
\item If 
$\phi_2 \in \mrm{E}^{K}_{\bsym{\si}}(M_{2}, M_{3})_{i, j}$,
then 
$\phi_3 \circ \phi_2 \circ \phi_1 \in 
\mrm{E}^{K}_{\bsym{\si}}(M_{1}, M_4)_{i, j}$.
\item  Assume that $\phi_1$ is surjective and $K$-linear, $\phi_3$ is injective 
and $K$-linear, and 
$\phi_3 \circ \phi_2 \circ \phi_1 
\in \mrm{E}^{K}_{\bsym{\si}}(M_{1}, M_4)_{i, j}$.
Then $\phi_2 \in \mrm{E}^{K}_{\bsym{\si}}(M_{2}, M_3)_{i, j}$.
\end{enumerate}
\end{lem}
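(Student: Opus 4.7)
The plan is to imitate the proof strategies of Lemmas \ref{lem:52} and \ref{lem:60}, with the work split according to whether $i=1$ or $i\geq 2$. I would proceed by induction on $n$, proving the three assertions simultaneously; for the induction to close cleanly, each part should be proved for all $(i,j)$ at dimension $n$ using the corresponding parts at dimension $n-1$.

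For part (3), there is no real work: since $\phi_1$ is a $K$-linear surjection and $\phi_3$ is a $K$-linear injection, choose $K$-linear splittings $\psi_1\colon M_2\to M_1$ and $\psi_3\colon M_4\to M_3$. Then $\phi_2=\psi_3\circ(\phi_3\circ\phi_2\circ\phi_1)\circ\psi_1$, and by Lemma \ref{lem:52}(1) the maps $\psi_1,\psi_3$ are BT operators, so part (2) (at the same dimension $n$) finishes the job. So the heart of the lemma lies in parts (1) and (2).

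For part (2) in the case $i=1$: if $j=1$, take a lattice $L_3^{\Diamond}\in\opn{Lat}(M_3)$ with $\phi_2(M_2)\subset L_3^{\Diamond}$, and using that $\phi_3$ is a BT operator (hence $\m_1(K)$-adically continuous by Lemma \ref{lem:51}) find $L_4\in\opn{Lat}(M_4)$ with $\phi_3(L_3^{\Diamond})\subset L_4$; then $(\phi_3\circ\phi_2\circ\phi_1)(M_1)\subset L_4$. The case $j=2$ is dual: given $L_2$ with $\phi_2(L_2)=0$, use the $\m_1(K)$-adic continuity of $\phi_1$ to find $L_1^{\Diamond}\in\opn{Lat}(M_1)$ with $\phi_1(L_1^{\Diamond})\subset L_2$, giving $(\phi_3\circ\phi_2\circ\phi_1)(L_1^{\Diamond})=0$. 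For $i\geq 2$, given any $(\phi_3\circ\phi_2\circ\phi_1)$-refinement $(L'_1,L'_4)\prec(L_1,L_4)$, sandwich it through chosen lattices $L_2^{\heartsuit}\subset L_2^{\Diamond}$ in $M_2$ and $L_3^{\heartsuit}\subset L_3^{\Diamond}$ in $M_3$ exactly as in the proof of Lemma \ref{lem:52}(2), producing a commutative diagram whose middle square is $\bar\phi_3\circ\bar\phi_2\circ\bar\phi_1$; by the induction hypothesis on $n$ (parts (2) for $\bar\phi_2$ and (1) for composition with $K$-linear maps $\alpha,\beta$ coming from the outer arrows), the induced map lies in $\mrm{E}^{\kk_1(K)}_{\d_1(\bsym\si)}(\cdots)_{i-1,j}$.

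For part (1), closure under scalars is immediate; for sums, take $\phi_1,\phi_2\in\mrm{E}^K_{\bsym\si}(M_1,M_2)_{i,j}$ and set $\psi:=a\phi_1+\phi_2$. When $i=1$, $j=1$: choose $L_2^{(k)}$ with $\phi_k(M_1)\subset L_2^{(k)}$; then $\psi(M_1)\subset L_2^{(1)}+L_2^{(2)}$, still a lattice. When $i=1$, $j=2$: choose $L_1^{(k)}$ with $\phi_k(L_1^{(k)})=0$; then $\psi$ vanishes on $L_1^{(1)}\cap L_1^{(2)}$, still a lattice. For $i\geq 2$ adapt the proof of Lemma \ref{lem:60}: given a $\psi$-refinement $(L'_1,L'_2)\prec_{\psi}(L_1,L_2)$, use $\m_1(K)$-adic continuity of each $\phi_k$ to enlarge to $L_1^{\Diamond}\subset L'_1$ and $L'_2\subset L_2^{\Diamond}$ on which both $\phi_k$ respect the refinement, form the corresponding diagram with the induced $a\bar\phi_1+\bar\phi_2$ in dimension $n-1$ (in the ideal by induction), and conclude via part (3) at dimension $n-1$ applied to the $\kk_1(K)$-linear bracketing maps.

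The main obstacle will be bookkeeping in the $i\geq 2$ case: one must be sure that the auxiliary lattices chosen when passing to a refinement really produce a diagram whose middle arrow lives in the correct $\kk_1(K)$-module ideal, and that the induction hypothesis can be invoked for both parts (2) and (3) at dimension $n-1$. This is the same sort of delicate lattice-juggling that appears in Lemma \ref{lem:52}, and as there the key is to perform both parts of the induction in parallel so that (3) is available when needed.
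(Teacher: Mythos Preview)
Your proposal is correct and follows essentially the same approach as the paper's proof, which also proceeds by induction on $n$ and on the sequential order of the assertions, handling $i=1$ directly and $i\geq 2$ by invoking the strategies of Lemmas \ref{lem:60} and \ref{lem:52}(2),(3). One small slip: in your discussion of part (2) for $i\geq 2$, the $\kk_1(K)$-linear bracketing maps $\alpha,\beta$ are stripped off via part (3) at dimension $n-1$, not part (1); you in fact say this correctly in your final paragraph, so this is just a typo rather than a gap.
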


\begin{proof}
We use induction on $n$ and on the sequential order of the assertions. 

\medskip \noindent
(1) For $i = 1$ this is clear. Now assume $i \geq 2$ (and hence also 
$n \geq 2$). For this we use the same strategy as in the proof of Lemma 
\ref{lem:60}. We are allowed to make use of assertion (3) in dimension 
$n - 1$. 

\medskip \noindent
(2) For $i = 1$ this is clear. Now assume $i \geq 2$ (and hence also 
$n \geq 2$). Here we use the same proof as of Lemma \ref{lem:52}(2), relying on 
assertions (2) and (3) in dimension $n - 1$.

\medskip \noindent
(3) Same as proof of Lemma \ref{lem:52}(3). We rely on assertion (2) in 
dimension $n$.
\end{proof}

\begin{lem} \label{lem:65}
Let $K$ be an $n$-dimensional TLF over $\k$, with $n \geq 1$, and let 
$\bsym{\si}$ be a system of liftings for $K$. Let $M_1$ and $M_2$ 
be finite $K$-modules. For any $i$ there is equality
\[ \mrm{E}^{K}_{\bsym{\si}}(M_1, M_2) = 
\mrm{E}^{K}_{\bsym{\si}}(M_1, M_2)_{i, 1} +
\mrm{E}^{K}_{\bsym{\si}}(M_1, M_2)_{i, 2} . \]
\end{lem}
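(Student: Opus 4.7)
The plan is to construct, for each $i \in \{1,\ldots,n\}$, a pair of projection operators $\pi^+_i, \pi^-_i : M_1 \to M_1$ with $\pi^+_i + \pi^-_i = \mrm{id}_{M_1}$ such that $\pi^+_i \in \mrm{E}^K_{\bsym{\si}}(M_1,M_1)_{i,1}$ and $\pi^-_i \in \mrm{E}^K_{\bsym{\si}}(M_1,M_1)_{i,2}$. Once such a splitting is in hand, the decomposition $\phi = \phi \circ \pi^+_i + \phi \circ \pi^-_i$ combined with Lemma \ref{lem:66}(2) (applied with $\phi_1 = \mrm{id}_{M_1}$, $\phi_2 = \pi^{\pm}_i$, $\phi_3 = \phi$) immediately gives the required sum decomposition of $\phi$.

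To construct $\pi^{\pm}_i$ I would fix a system of uniformizers $(a_1,\ldots,a_n)$ of $K$ and a $K$-linear identification $M_1 \cong K^{r_1}$. By Theorem \ref{thm:25} this yields an isomorphism of TLFs $K \cong \k'((t_1,\ldots,t_n))$, with $\k' := \kk_n(K)$ and $t_j$ corresponding to $a_j$. Using the Cauchy-sum representation (the Exercise in Section \ref{sec:ST}), every $x \in M_1$ is uniquely $x = \sum_{\bsym{j} \in \Z^n} x_{\bsym{j}} \bsym{t}^{\bsym{j}}$ with $x_{\bsym{j}} \in (\k')^{r_1}$, and I would define
\[ \pi^+_i(x) := \sum_{\bsym{j} : j_i \geq 0} x_{\bsym{j}} \bsym{t}^{\bsym{j}} , \qquad \pi^-_i := \mrm{id}_{M_1} - \pi^+_i . \]
Both are $\k'$-linear and continuous: the series defining $\pi^+_i$ converges in $K$, and $\pi^+_i$ is realized as a projection in the natural direct-sum decomposition of $K$ into its ``positive'' and ``negative'' $t_i$-parts.

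The claim that $\pi^{\pm}_i$ belongs to the advertised ideal is proved by induction on $n$. For $n = 1$, Example \ref{exa:60} makes $\pi^{\pm}_1$ a BT operator; since $\pi^+_1(M_1) = \OO_1(K)^{r_1}$ is a lattice and $\pi^-_1$ kills this lattice, Definition \ref{dfn:73}(i)--(ii) gives the respective memberships. For $n \geq 2$, both $\pi^{\pm}_i$ preserve the $t_1$-grading, so condition (2.i) of Definition \ref{dfn:5} holds by Lemma \ref{lem:51}. For $i = 1$ the ideal conditions Definition \ref{dfn:73}(i)--(ii) again hold directly, and for condition (2.ii) the induced $\bar\pi^+_1 : L^\circ_1/L'^\circ_1 \to L'^\circ_2/L^\circ_2$ is $\kk_1(K)$-linear via $\si_1$ (since $\pi^+_1$ is $\si_1(\kk_1(K))$-linear on $\OO_1(K)^{r_1}$), hence a BT operator by Lemma \ref{lem:52}(1). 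For $i \geq 2$ the key structural observation is that $\pi^+_i$ acts $t_1$-coefficient-wise: writing $x \in K$ as $\sum_{j_1} a_{j_1}\, t_1^{j_1}$ with $a_{j_1} \in \kk_1(K)$, one has $\pi^+_i(x) = \sum_{j_1} \pi^+_{i-1, \kk_1(K)}(a_{j_1}) \, t_1^{j_1}$, where $\pi^+_{i-1, \kk_1(K)}$ is the analogous level-$(i-1)$ projection for the $(n-1)$-dimensional TLF $\kk_1(K)$. Hence on every finite $\kk_1(K)$-vector space $L^\circ_1/L'^\circ_1$ (Lemma \ref{lem:50}(5)), the induced $\bar\pi^+_i$ decomposes as a direct sum of copies of $\pi^+_{i-1, \kk_1(K)}$, which by the inductive hypothesis lies in $\mrm{E}^{\kk_1(K)}_{\d_1(\bsym{\si})}(\cdots)_{i-1, 1}$. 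Definition \ref{dfn:73}(iii) then places $\pi^+_i \in \mrm{E}^K_{\bsym{\si}}(M_1, M_1)_{i, 1}$, and the dual argument handles $\pi^-_i$.

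The main obstacle is verifying condition (2.ii) uniformly across arbitrary lattice refinements, not merely the standard ones of the form $t_1^{a} \OO_1(K)^{r_1}$: the coefficient-wise description of $\bar\pi^+_i$ is transparent only on standard quotients. The way around this is that any two $\OO_1(K)$-lattices in $M_1$ of the same rank are related by some $U \in \opn{GL}_{r_1}(K)$, any such $U$ is a BT operator by Lemma \ref{lem:52}(1), and conjugation by $U$ preserves each ideal $\mrm{E}_{i,j}$ by Lemma \ref{lem:66}(2)--(3). This reduces the verification on arbitrary refinements to the standard case already handled, completing the induction.
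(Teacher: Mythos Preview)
Your approach is essentially the paper's: decompose the identity of $M_1$ (or of $K$, after reduction) as a sum of two operators lying in the respective ideals, then use the two-sided ideal property (Lemma~\ref{lem:66}(2)) to split an arbitrary $\phi$. Unwinding the paper's induction gives exactly your coefficient-wise projections $\pi^{\pm}_i$; the paper just packages this as ``lift the decomposition $\bsym{1}_{\kk_1(K)} = \bar{\phi}_1 + \bar{\phi}_2$ from the induction hypothesis along $\si_1$.''

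The one place you go astray is the ``main obstacle'' paragraph. Your proposed reduction via $\opn{GL}_{r_1}(K)$-conjugation is not well formed: conjugating $\pi^+_i$ by $U$ gives a \emph{different} operator $U\,\pi^+_i\,U^{-1}$, and knowing that conjugation preserves the ideals does not tell you that the refinement condition for $\pi^+_i$ itself on the lattice $U \cdot L^{\circ}$ reduces to the refinement condition for $\pi^+_i$ on $L^{\circ}$ (you would need $\pi^+_i$ to commute with $U$, which it does not for $i \geq 2$). Fortunately the obstacle is illusory. The paper first uses the matrix isomorphisms coming from Lemmas~\ref{lem:52} and~\ref{lem:66},
\[
\mrm{E}^{K}_{\bsym{\si}}(M_1,M_2) \cong \opn{Mat}_{r_2 \times r_1}\bigl(\mrm{E}_{\bsym{\si}}(K)\bigr),
\qquad
\mrm{E}^{K}_{\bsym{\si}}(M_1,M_2)_{i,j} \cong \opn{Mat}_{r_2 \times r_1}\bigl(\mrm{E}_{\bsym{\si}}(K)_{i,j}\bigr),
\]
to reduce to $M_1 = M_2 = K$. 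In rank one every $\OO_1(K)$-lattice in $K$ is of the form $a^m\,\OO_1(K)$, so \emph{all} refinements are ``standard'' and your coefficient-wise description of $\bar{\pi}^{\pm}_i$ applies directly. This is the ``little calculation'' the paper alludes to.

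One minor point: your construction of $\pi^{\pm}_i$ via a full parametrization $K \cong \k'((t_1,\ldots,t_n))$ implicitly introduces a system of liftings that need not be the given $\bsym{\si}$, and at this stage of the paper independence of $\bsym{\si}$ has not yet been established. The paper avoids this by using only $\si_1$ and a single uniformizer $a$ to write elements of $K$ as $\sum_q \si_1(b_q)\,a^q$; the inductive step then invokes $\d_1(\bsym{\si})$ on $\kk_1(K)$. You can do the same, or equivalently choose your uniformizers so that the resulting parametrization is compatible with $\bsym{\si}$.
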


\begin{proof}
For $i = 1$ this is clear. (It is Tate's original observation in \cite{Ta}.)

Assume $i \geq 2$ (and hence also $n \geq 2$). For this we use induction on 
$n$. Choose $K$-linear isomorphisms 
$K^{r_l} \cong M_l$ for $l = 1, 2$. According to Lemmas \ref{lem:52} and 
\ref{lem:66} there are $\k$-linear isomorphisms 
\[ \mrm{E}^{K}_{\bsym{\si}}(M_1, M_2) \cong 
\opn{Mat}_{r_2 \times r_1}(\mrm{E}_{\bsym{\si}}(K)) \]
and 
\[ \mrm{E}^{K}_{\bsym{\si}}(M_1, M_2)_{i, j} \cong 
\opn{Mat}_{r_2 \times r_1}(\mrm{E}_{\bsym{\si}}(K)_{i, j}) . \]
Therefore we can assume that $M_1 = M_2 = K$. 

The induction hypothesis says that the identity automorphism 
$\bsym{1}_{\kk_1(K)}$ of the TLF $\kk_1(K)$ is a sum 
$\bsym{1}_{\kk_1(K)} = \bar{\phi}_1 + \bar{\phi}_2$, where 
$\bar{\phi}_j \in \mrm{E}_{\d_1(\bsym{\si})}(\kk_1(K))_{i - 1, j}$.
Choose a uniformizer $a \in \OO_1(K)$. Any element of $K$ has a unique 
expansion as a series 
$\sum\nolimits_{q \in \Z} \si_1(b_q) \cd a^q$, where 
$b_q \in \kk_1(K)$ and $b_q = 0$ for $q \ll 0$. Define 
$\phi_j \in \opn{End}_{\k}(K)$ by the formula 
\[ \phi_j \Bigl( \sum\nolimits_{q \in \Z} \si_1(b_q) \cd a^q \Bigr) :=
\sum\nolimits_{q \in \Z} \si_1 \bigl( \bar{\phi}_j(b_q) \bigr) \cd a^q . \]
A little calculation shows that $\phi_j \in \mrm{E}_{\bsym{\si}}(K)_{i, j}$; 
and clearly $\phi_1 + \phi_2 = \bsym{1}_K$. 
\end{proof}

Here is a definition from \cite{Ta}. 

\begin{dfn}
Let $M$ be a $\k$-module. An operator
$\phi \in \opn{End}_{\k}(M)$ is called {\em finite potent} if for 
some positive integer $q$, the operator $\phi^q$ has finite rank, i.e.\ the 
$\k$-module $\phi^q(M)$ is finite. 
\end{dfn}

\begin{lem} \label{lem:100}
Let $K$ be a TLF over $\k$ of dimension $n \geq 1$, let 
$\bsym{\si}$ be a system of liftings for $K$, and let $M$ be a finite 
$K$-module. Then any operator 
\[ \phi \in \bigcap_{\substack{i = 1, \ldots, n \\ j = 1, 2}} \ 
\opn{E}^{K}_{\bsym{\si}}(M, M)_{i, j} \]
is finite potent.
\end{lem}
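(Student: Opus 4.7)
The plan is to proceed by induction on $n$, reducing to an operator on a finite-length $\OO_1(K)$-module (viewed via $\si_1$ as a finite $\kk_1(K)$-module) whose iterate has finite rank, and then to lift the finite-rank conclusion back to $M$.

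The setup step is the same in all dimensions. From $\phi \in \mrm{E}_{\bsym{\si}}(K)_{1,1} \cap \mrm{E}_{\bsym{\si}}(K)_{1,2}$ we obtain lattices with $\phi(M) \subset L^{\Diamond}$ and $\phi(L^{\heartsuit}) = 0$. Intersecting and enlarging (both operations preserve the property of being an $\OO_1(K)$-lattice) we can arrange $L^{\heartsuit} \subset L^{\Diamond}$. Since $\phi(L^{\Diamond}) \subset \phi(M) \subset L^{\Diamond}$ and $\phi(L^{\heartsuit})=0$, the operator $\phi$ induces a $\k$-linear endomorphism $\bar{\phi}$ of $\bar{M} := L^{\Diamond}/L^{\heartsuit}$. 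Since $L^{\Diamond}/L^{\heartsuit}$ has finite length over $\OO_1(K)$, the module $\opn{rest}_{\si_1}(\bar{M})$ is finite over $\kk_1(K)$.

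For the induction step (so $n \geq 2$) we need to see that $\bar{\phi}$ sits in all the cubical ideals of $\mrm{E}^{\kk_1(K)}_{\d_1(\bsym{\si})}(\bar{M}, \bar{M})$. To recognize $\bar{\phi}$ as a ``$\bar{\phi}$ coming from a $\phi$-refinement'' we consider the pair $(L^{\Diamond}, L^{\heartsuit}) \in \opn{Lat}(M, M)$; one checks directly that $(L^{\heartsuit}, L^{\Diamond}) \prec_{\phi} (L^{\Diamond}, L^{\heartsuit})$ is a $\phi$-refinement, whose induced map is exactly our $\bar{\phi}: \bar{M} \to \bar{M}$. Applying Definition~\ref{dfn:73}(iii) to $\phi \in \mrm{E}_{\bsym{\si}}(K)_{i,j}$ for each $i \in \{2,\ldots,n\}$ and $j \in \{1,2\}$ yields $\bar{\phi} \in \mrm{E}^{\kk_1(K)}_{\d_1(\bsym{\si})}(\bar{M}, \bar{M})_{i-1,j}$, covering all indices needed for the induction hypothesis on the $(n-1)$-dimensional TLF $\kk_1(K)$. (The base case $n=1$ is trivial: here $\kk_1(K)$ is finite over $\k$, so $\bar{M}$ itself is finite over $\k$ and $\bar{\phi}$ is automatically finite potent.)

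By the induction hypothesis $\bar{\phi}^q$ has finite rank for some $q \geq 1$, i.e.\ $\bar{\phi}^q(\bar{M})$ is a finite $\k$-module. Lift a $\k$-basis of $\bar{\phi}^q(\bar{M})$ to a finite-dimensional $\k$-subspace $V \subset L^{\Diamond}$, so that $\phi^q(L^{\Diamond}) \subset L^{\heartsuit} + V$. The crucial point is the follow-up step: applying $\phi$ kills $L^{\heartsuit}$, giving
\[ \phi^{q+1}(L^{\Diamond}) \subset \phi(L^{\heartsuit}) + \phi(V) = \phi(V), \]
a finite $\k$-module. Since $\phi(M) \subset L^{\Diamond}$, we conclude $\phi^{q+2}(M) \subset \phi^{q+1}(L^{\Diamond}) = \phi(V)$, which is finite over $\k$. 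Hence $\phi^{q+2}$ has finite rank, so $\phi$ is finite potent.

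The main obstacle is the ``packaging'' step where one must produce a $\phi$-refinement whose induced map is an honest endomorphism of $\bar{M}$ (rather than a map between two different subquotients). The choice $(L^{\heartsuit}, L^{\Diamond}) \prec_{\phi} (L^{\Diamond}, L^{\heartsuit})$ handles this uniformly for every $(i,j)$ with $i \geq 2$, letting the induction proceed cleanly; everything else is bookkeeping enabled by Definition~\ref{dfn:73}.
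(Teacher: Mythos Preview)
Your proof is correct and follows essentially the same route as the paper's: induction on $n$, using the $(1,1)$ and $(1,2)$ conditions to sandwich $\phi$ between lattices $L^{\heartsuit} \subset L^{\Diamond}$, passing to the induced endomorphism $\bar{\phi}$ of $\bar{M} = L^{\Diamond}/L^{\heartsuit}$, and then invoking the induction hypothesis. Your explicit identification of $(L^{\heartsuit}, L^{\Diamond}) \prec_{\phi} (L^{\Diamond}, L^{\heartsuit})$ as the relevant $\phi$-refinement, and your explicit lifting argument $\phi^{q+2}(M) \subset \phi(V)$, spell out steps that the paper leaves as ``by definition'' and ``a simple linear algebra argument based on the diagram''. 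One notational slip: in your first sentence of the setup you write $\phi \in \mrm{E}_{\bsym{\si}}(K)_{1,1} \cap \mrm{E}_{\bsym{\si}}(K)_{1,2}$, but you mean $\mrm{E}^{K}_{\bsym{\si}}(M,M)_{1,j}$.
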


\begin{proof}
The proof is by induction on $n$. (For $n = 1$ this is Tate's original 
observation.) 

Since $\phi \in \opn{E}^{K}_{\bsym{\si}}(M, M)_{1, 1}$, there is a lattice 
$L_2 \in \opn{Lat}(M)$ such that $\phi(M) \subset L_2$. 
Since $\phi \in \opn{E}^{K}_{\bsym{\si}}(M, M)_{1, 2}$, there is a lattice 
$L_1 \in \opn{Lat}(M)$ such that $\phi(L_1) = 0$. 
After replacing $L_1$ by a smaller lattice, we can assume that 
$L_1 \subset L_2$. Consider the commutative diagram 
\[ \UseTips \xymatrix @C=8ex @R=6ex {
0 
\ar[r]^{\subset}
\ar[d]_{\phi}
&
L_1
\ar[d]_{\phi}
\ar[r]^{\subset}
\ar[dl]_{\phi}
&
L_2
\ar[d]_{\phi}
\ar[r]^{\subset}
&
M
\ar[dl]_{\phi}
\ar[d]_{\phi}
\\
0 
\ar[r]_{\subset}
&
L_1
\ar[r]_{\subset}
&
L_2
\ar[r]_{\subset}
&
M
} \]
in $\cat{Mod} \k$. 
Define 
$\bar{M} := L_2 / L_1$. 
If we can prove that the induced homomorphism 
$\bar{\phi} : \bar{M} \to \bar{M}$ is finite potent, then it will follow, by a 
simple linear algebra argument based on the diagram above, that $\phi$ is 
finite potent. 

If $n = 1$ then $\bar{M}$ is finite over $\k$, so we are done. If $n \geq 2$, 
then by definition
\[ \bar{\phi} \in 
\bigcap_{\substack{i = 1, \ldots, n - 1 \\ j = 1, 2}} \ 
\opn{E}^{K}_{\d_1(\bsym{\si})}(\bar{M}, \bar{M})_{i, j} . \]
The induction hypothesis says that $\bar{\phi}$ is finite potent. 
\end{proof}

\begin{thm} \label{thm:65}
Let $K$ be an $n$-dimensional TLF over $\k$, and let 
$(M_1, M_2)$ be a pair of finite $K$-modules.
Suppose $\bsym{\si}$ and $\bsym{\si}'$
are two systems of liftings for $K$. 
\begin{enumerate}
\item There is equality 
\[ \mrm{E}^{K}_{\bsym{\si}}(M_{1}, M_{2}) =
\mrm{E}^{K}_{\bsym{\si}'}(M_{1}, M_{2}) \]
inside $\opn{Hom}_{\k}(M_{1}, M_{2})$
\item If $n \geq 1$, there is equality 
\[ 
\mrm{E}^{K}_{\bsym{\si}}(M_{1}, M_{2})_{i, j} =
\mrm{E}^{K}_{\bsym{\si}'}(M_{1}, M_{2})_{i, j} \]
for all $i = 1, \ldots, n$ and $j = 1, 2$. 
\end{enumerate}
\end{thm}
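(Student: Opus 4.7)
My plan is to prove both assertions simultaneously by induction on $n$, with $n = 0$ trivial (both $\mrm{E}^K_{\bsym{\si}}(M_1, M_2)$ equal $\opn{Hom}_{\k}(M_1, M_2)$, and assertion (2) is vacuous). So assume $n \geq 1$ and that the theorem holds in dimension $n - 1$ for every pair of finite $\kk_1(K)$-modules. Condition (2.i) of Definition \ref{dfn:5} is $\m_1(K)$-adic continuity by Lemma \ref{lem:51}, hence intrinsic to $\phi$ and $K$ and unaffected by the liftings. Conditions (i) and (ii) of Definition \ref{dfn:73} for $i = 1$ likewise only concern lattices in $M_1, M_2$, so are lifting-free. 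What remains is to show that condition (2.ii) of Definition \ref{dfn:5} and condition (iii) of Definition \ref{dfn:73}---which both require the induced map $\bar\phi \colon L_1 / L'_1 \to L'_2 / L_2$ to be a BT operator, respectively a BT operator in the $(i - 1, j)$-ideal, between the $\kk_1(K)$-modules $\opn{rest}_{\si_1}(L_1 / L'_1)$ and $\opn{rest}_{\si_1}(L'_2 / L_2)$---do not depend on the choice of $\bsym{\si}$.

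Applying the inductive hypothesis to the TLF $\kk_1(K)$, I may suppress the system of liftings $\d_1(\bsym{\si})$ entirely and simply write $\mrm{E}^{\kk_1(K)}(\cdot, \cdot)$ and $\mrm{E}^{\kk_1(K)}(\cdot, \cdot)_{i - 1, j}$ without subscripts. The issue then reduces to comparing what happens to the two $\kk_1(K)$-module structures $\opn{rest}_{\si_1}(\cdot)$ and $\opn{rest}_{\si_1'}(\cdot)$ on the same $\OO_1(K)$-module. Fix a $\phi$-refinement $(L'_1, L'_2) \prec_\phi (L_1, L_2)$, write $N_1 := L_1 / L'_1$ and $N_2 := L'_2 / L_2$, and choose $l$ so that both $N_l$ are annihilated by $\m_1(K)^{l + 1}$. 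They are then finite modules over the artinian local ring $A := \OO_1(K) / \m_1(K)^{l + 1}$. By Lemma \ref{lem:25}, $A$ is precise in $\cat{STRing}_{\mrm{c}} \k$ with residue field $\kk_1(K)$; so by Corollary \ref{cor:21} the induced liftings $\tilde\si_1, \tilde\si_1' \colon \kk_1(K) \to A$ are both precise.

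Now choose $\kk_1(K)$-linear trivializations $\psi_l \colon \kk_1(K)^{r_l} \iso \opn{rest}_{\tilde\si_1}(N_l)$ and $\psi_l' \colon \kk_1(K)^{r_l} \iso \opn{rest}_{\tilde\si_1'}(N_l)$; the ranks $r_l$ agree for both liftings because a composition series of the finite-length $A$-module $N_l$ has $\kk_1(K)$-one-dimensional factors irrespective of the lifting. Theorem \ref{thm:20}(2), applied to $\bsym{1}_{N_l}$ with the two precise liftings, then provides the key input: the change-of-basis maps $\eta_l := \psi_l'^{-1} \circ \psi_l$ and their inverses lie in $\opn{GL}_{r_l}(\DD^{\mrm{cont}}_{\kk_1(K) / \k})$. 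Writing the matrix representations $\bar\phi_{\si_1} := \psi_2^{-1} \circ \bar\phi \circ \psi_1$ and $\bar\phi_{\si_1'} := \psi_2'^{-1} \circ \bar\phi \circ \psi_1'$, direct algebra yields $\bar\phi_{\si_1'} = \eta_2 \circ \bar\phi_{\si_1} \circ \eta_1^{-1}$.

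By Lemma \ref{lem:67}, the matrix entries of $\eta_l$ and $\eta_l^{-1}$ are BT operators on $\kk_1(K)$. Consequently, Lemma \ref{lem:52}(2) shows that $\bar\phi_{\si_1} \in \opn{Mat}_{r_2 \times r_1}(\mrm{E}^{\kk_1(K)}(\kk_1(K), \kk_1(K)))$ implies the same for $\bar\phi_{\si_1'}$, and the invertibility of $\eta_l$ gives the converse; this yields assertion (1). The identical argument with Lemma \ref{lem:66}(2) in place of Lemma \ref{lem:52}(2) handles assertion (2). The main obstacle, and the whole reason Section \ref{sec:CDOs} was developed, is the application of Theorem \ref{thm:20}(2): without knowing that the two lifting-induced $\kk_1(K)$-structures on $N_l$ are related by an invertible matrix of continuous differential operators, one could not transport the BT property from $\si_1$ to $\si_1'$. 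Once Theorem \ref{thm:20} is in hand, the remainder is a formal composition/conjugation argument.
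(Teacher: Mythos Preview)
Your proof is correct and follows essentially the same route as the paper's own argument: induction on $n$, reduction to the change-of-$\kk_1(K)$-structure on the quotients $L_1/L'_1$ and $L'_2/L_2$, invocation of Lemma \ref{lem:25} and Theorem \ref{thm:20}(2) to see that the transition matrices lie in $\opn{GL}_{r_l}(\DD^{\mrm{cont}}_{\kk_1(K)/\k})$, and then Lemma \ref{lem:67} together with Lemmas \ref{lem:52} and \ref{lem:66} to transport BT membership and the ideal conditions. Your explicit appeal to Corollary \ref{cor:21} (to ensure both induced liftings $\tilde\si_1, \tilde\si_1'$ on the artinian quotient $A$ are precise before applying Theorem \ref{thm:20}(2)) is in fact a bit more careful than the paper, which cites only Lemma \ref{lem:25} and Theorem \ref{thm:20} at that step.
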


\begin{proof}
(1) By symmetry it is enough to prove the inclusion ``$\subset$''. 
The proof is by induction of $n$. For $n = 0$ there is nothing to prove.

Now assume $n \geq 1$. Let 
$\phi \in \mrm{E}^{K}_{\bsym{\si}}(M_{1}, M_{2})$. We have to prove that 
$\phi \in \mrm{E}^{K}_{\bsym{\si}'}(M_{1}, M_{2})$.
Since condition (2.i) of Definition \ref{dfn:5} does not involve the liftings, 
there is nothing to check. 

Next we consider condition (2.ii). Take some 
$\phi$-refinement 
$(L'_1, L'_2) \prec_{\phi} (L_1, L_2)$ in $\opn{Lat}(M_1, M_2)$.
Define 
$\bar{M}_1 := L_1 / L'_1$ and $\bar{M}_2 := L'_2 / L_2$,
and let $\bar{\phi} : \bar{M}_1 \to \bar{M}_2$
be the induced homomorphism. 
Let us write  
$\bar{K} := \kk_1(K)$, $\bar{\bsym{\si}} := \d_1(\bsym{\si})$ and 
$\bar{\bsym{\si}}' := \d_1(\bsym{\si}')$.
We know that 
\begin{equation} \label{eqn:65}
\bar{\phi} \in \mrm{E}^{\bar{K}}_{\bar{\bsym{\si}}} \bigl(
\opn{rest}_{\si_1}(\bar{M}_1), \opn{rest}_{\si_1}(\bar{M}_2) \bigr) .
\end{equation}
The induction hypothesis says that 
$\mrm{E}_{\bar{\bsym{\si}}}(\bar{K}) = \mrm{E}_{\bar{\bsym{\si}}'}(\bar{K})$.

Choose $\bar{K}$-linear isomorphisms
$\chi_l : \bar{K}^{r_l} \iso \opn{rest}_{\si_1}(\bar{M}_l)$
and
$\chi'_l : \bar{K}^{r_l} \iso \lb \opn{rest}_{\si'_1}(\bar{M}_l)$.
This gives rise to a commutative diagram 
\[ \UseTips \xymatrix @C=6ex @R=6ex {
\bar{M}_1
\ar[r]^{=}
&
\bar{M}_1
\ar[r]^{\bar{\phi}}
&
\bar{M}_2
\ar[r]^{=}
&
\bar{M}_2
\\
\bar{K}^{r_1}
\ar[u]_{\chi'_1}
\ar[r]^{\psi_1}
&
\bar{K}^{r_1}
\ar[u]_{\chi_1}
\ar[r]^{\psi}
&
\bar{K}^{r_2}
\ar[u]_{\chi_2}
\ar[r]^{\psi_2}
&
\bar{K}^{r_2}
\ar[u]_{\chi'_2}
} \]
in $\cat{Mod} \k$.
According to formula (\ref{eqn:65}) and Lemma \ref{lem:52}, the operator $\psi$ 
is in \lb
$\opn{Mat}_{r_2 \times r_1}(\mrm{E}_{\bar{\bsym{\si}}}(\bar{K}))$.
Combining Lemma \ref{lem:25} and Theorem \ref{thm:20} we see that the operators 
$\psi_l$ belong to $\opn{GL}_{r_l}(\DD^{\mrm{cont}}_{\bar{K} / \k})$. 
Therefore, by Lemma \ref{lem:67}, we get 
$\psi_l \in \opn{GL}_{r_l}(\mrm{E}_{\bar{\bsym{\si}}}(\bar{K}))$.
We conclude that 
$\psi' := \psi_2 \circ \psi \circ \psi_1$ 
is in 
\[ \opn{Mat}_{r_2 \times r_1}(\mrm{E}_{\bar{\bsym{\si}}}(\bar{K})) = 
\opn{Mat}_{r_2 \times r_1}(\mrm{E}_{\bar{\bsym{\si}}'}(\bar{K})) . \]
So by Lemma \ref{lem:52} we have 
\[ \bar{\phi} = \chi'_2 \circ \psi' \circ \chi_1'^{\, -1} \in 
\mrm{E}^{\bar{K}}_{\bar{\bsym{\si}}'} \bigl(
\opn{rest}_{\si'_1}(\bar{M}_1), \opn{rest}_{\si'_1}(\bar{M}_2) \bigr) . \]
This is what we had to prove. 

\medskip \noindent
(2) Again we only prove the inclusion ``$\subset$'', and the proof is by 
induction on $n$. For $i = 1$ the conditions do not involve the liftings, so 
there is nothing to check. Now consider $i \geq 2$ (and hence for $n \geq 2$). 
We assume that the theorem is true for dimension $n - 1$. 
Take some 
$\phi \in \mrm{E}^{K}_{\bsym{\si}}(M_{1}, M_{2})_{i, j}$,
and let 
$(L'_1, L'_2) \prec_{\phi} (L_1, L_2)$ be a $\phi$-refinement in 
$\opn{Lat}(M_1, M_2)$. In the notation of the proof of part (1) 
above, the operator $\psi$ is inside 
$\opn{Mat}_{r_2 \times r_1}(\mrm{E}_{\bar{\bsym{\si}}}(\bar{K})_{i, j})$.
This is because 
\[ \bar{\phi} \in \mrm{E}^{\bar{K}}_{\bar{\bsym{\si}}} \bigl(
\opn{rest}_{\si_1}(\bar{M}_1), \opn{rest}_{\si_1}(\bar{M}_2) \bigr)_{i, j} , \]
and $\mrm{E}_{\bar{\bsym{\si}}}(\bar{K})_{i, j}$ is a 
$2$-sided ideal in the ring $\mrm{E}_{\bar{\bsym{\si}}}(\bar{K})$.
The induction hypothesis tells us that 
$\mrm{E}_{\bar{\bsym{\si}}}(\bar{K})_{i, j} = 
\mrm{E}_{\bar{\bsym{\si}}'}(\bar{K})_{i, j}$.
Therefore the same calculations as above yield
\[ \bar{\phi} \in \mrm{E}^{\bar{K}}_{\bar{\bsym{\si}}'} \bigl(
\opn{rest}_{\si'_1}(\bar{M}_1), \opn{rest}_{\si'_1}(\bar{M}_2) \bigr)_{i, j} \]
as required.
\end{proof}

Taking $M_1 = M_2 := K$ in the theorem we obtain:

\begin{cor} \label{cor:100}
Let $K$ be an $n$-dimensional TLF over $\k$, and let $\bsym{\si}$ and 
$\bsym{\si}'$ be two systems of liftings for $K$. Then 
$\mrm{E}_{\bsym{\si}}(K) = \mrm{E}_{\bsym{\si}'}(K)$.
If $n \geq 1$ then 
$\mrm{E}_{\bsym{\si}}(K)_{i, j} =  \mrm{E}_{\bsym{\si}'}(K)_{i, j}$
for all $i, j$.
\end{cor}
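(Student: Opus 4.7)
The plan is to observe that this corollary is an immediate specialization of Theorem~\ref{thm:65} to the case $M_1 = M_2 := K$, viewed as the free $K$-module of rank one. By Definition~\ref{dfn:100} the subring $\mrm{E}_{\bsym{\si}}(K) \subset \opn{End}_{\k}(K)$ and its two-sided ideals $\mrm{E}_{\bsym{\si}}(K)_{i,j}$ are by definition just abbreviations for $\mrm{E}^{K}_{\bsym{\si}}(K,K)$ and $\mrm{E}^{K}_{\bsym{\si}}(K,K)_{i,j}$ respectively, and similarly for $\bsym{\si}'$.

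Concretely, I would invoke Theorem~\ref{thm:65}(1) with $M_1 = M_2 = K$ to obtain the equality of subrings $\mrm{E}^{K}_{\bsym{\si}}(K,K) = \mrm{E}^{K}_{\bsym{\si}'}(K,K)$ inside $\opn{End}_{\k}(K)$, which unwinds to the first assertion of the corollary. For $n \geq 1$, I would then invoke Theorem~\ref{thm:65}(2) for each pair $(i,j)$ with $i \in \{1,\ldots,n\}$ and $j \in \{1,2\}$, yielding $\mrm{E}^{K}_{\bsym{\si}}(K,K)_{i,j} = \mrm{E}^{K}_{\bsym{\si}'}(K,K)_{i,j}$; after unwinding the definition, this is the second assertion.

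There is no real obstacle, since all the work has already been done in the proof of Theorem~\ref{thm:65}. That proof is where the genuine technical input resides: it reduces the matching of BT operators for the two liftings, at each inductive step, to the fact that the change-of-basis matrix associated with an $A$-module structure on a finite $K$-module (for $A$ an artinian quotient of $\OO_1(K)$) lies in $\opn{GL}_{r}(\DD^{\mrm{cont}}_{\kk_1(K)/\k})$, via Theorem~\ref{thm:20} combined with the preciseness of $\OO_1(K)/\m_1(K)^{l+1}$ provided by Lemma~\ref{lem:25}, together with the inclusion $\DD^{\mrm{cont}}_{\bar K/\k} \subset \mrm{E}_{\bar{\bsym{\si}}}(\bar K)$ from Lemma~\ref{lem:67}. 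Thus the proof of the corollary itself is a one-line deduction; it is stated separately only because this is the cleanest form of the main Theorem~\ref{thm:85} from the introduction, and it justifies Definition~\ref{dfn:150} of the intrinsic ring $\mrm{E}(K)$ of local Beilinson-Tate operators.
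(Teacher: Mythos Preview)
Your proposal is correct and matches the paper's own argument exactly: the paper derives the corollary by simply taking $M_1 = M_2 := K$ in Theorem~\ref{thm:65}, which is precisely what you do after unwinding Definition~\ref{dfn:100}. Your additional commentary on the ingredients entering the proof of Theorem~\ref{thm:65} is accurate but not needed for the corollary itself.
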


The corollary justifies the next definition. 

\begin{dfn} \label{dfn:66}
Let $K$ be an $n$-dimensional TLF over $\k$.
\begin{enumerate}
\item We define 
\[ \mrm{E}(K) := \mrm{E}_{\bsym{\si}}(K) , \]
where $\bsym{\si}$ is any system of liftings for $K$. 
Elements of $\mrm{E}(K)$ are called  {\em local Beilinson-Tate operators} on 
$K$.
\item Assume $n \geq 1$. For $i \in \{ 1, \ldots, n \}$ and 
$j \in \{ 1, 2 \}$ we define 
\[ \mrm{E}(K)_{i, j} := \mrm{E}^{K}_{\bsym{\si}}(K, K)_{i, j} , \]
where $\bsym{\si}$ is any system of liftings for $K$.
\end{enumerate}
\end{dfn}

Of course when $n= 0$ we have $\opn{E}(K) = \opn{End}_{\k}(K)$, which is not 
interesting. The next theorem summarizes what we know about BT operators in 
dimensions $\geq 1$. Recall the notion of an $n$-dimensional cubically 
decomposed ring of operators on a commutative $\k$-ring $A$, from Definition 
\ref{dfn:85}.

\begin{thm} \label{thm:103}
Let $K$ be an $n$-dimensional TLF over $\k$, with $n \geq 1$. 
\begin{enumerate}
\item The ring of BT operators $\opn{E}(K)$, with its collection of ideals 
$\{ \opn{E}(K)_{i, j} \}$, is an $n$-dimensional cubically decomposed ring of 
operators on $K$.

\item There are inclusions of rings 
\[ K \subset \DD^{\mrm{cont}}_{K / \k} \subset \opn{E}^{}_{}(K) \subset 
\opn{End}^{\mrm{cont}}_{\k}(K) \subset \opn{End}_{\k}(K).  \]
\end{enumerate}
\end{thm}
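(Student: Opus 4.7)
The proof assembles the lemmas already established in this section, so the plan is organizational rather than technically demanding. I would first dispose of part (2) and then use its content (in particular $\DD^{\mrm{cont}}_{K / \k} \subset \opn{E}(K)$) together with the ``bookkeeping'' lemmas of the section to check the four clauses of Definition \ref{dfn:85}.

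For part (2), the innermost inclusion $K \subset \DD^{\mrm{cont}}_{K / \k}$ holds because left multiplication by a fixed element of $K$ is continuous (by the semi-continuity of the ST ring multiplication on $K$) and is a differential operator of order $0$. The inclusion $\DD^{\mrm{cont}}_{K / \k} \subset \opn{E}(K)$ is Lemma \ref{lem:67} applied with $M_1 = M_2 = K$, combined with Corollary \ref{cor:100} which removes the dependence on a choice of system of liftings $\bsym{\si}$. The inclusion $\opn{E}(K) \subset \opn{End}^{\mrm{cont}}_{\k}(K)$ is Lemma \ref{lem:44}, once one notes that the fine $K$-module topology on $K$, viewed as a free $K$-module of rank $1$, coincides with the given TLF topology on $K$. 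The outermost inclusion $\opn{End}^{\mrm{cont}}_{\k}(K) \subset \opn{End}_{\k}(K)$ is tautological.

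For part (1), I would verify the four conditions of Definition \ref{dfn:85} in turn. First, $\opn{E}(K)$ is a $\k$-subring of $\opn{End}_{\k}(K)$ containing $K$: the inclusion $K \subset \opn{E}(K)$ (via multiplication operators, which are $K$-linear and hence BT by Lemma \ref{lem:52}(1)) supplies the unit and the required subring of scalars; closure under composition and identity are Lemma \ref{lem:52}(1, 2); closure under $\k$-linear combinations is Lemma \ref{lem:60} with $M_1 = M_2 = K$. Second, each $\opn{E}(K)_{i, j}$ is a two-sided ideal: additive closure is Lemma \ref{lem:66}(1), and two-sidedness is Lemma \ref{lem:66}(2) applied with $M_1 = M_2 = M_3 = M_4 = K$ (taking $\phi_3$ or $\phi_1$ to be the identity yields, respectively, left or right absorption). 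Third, the decomposition $\opn{E}(K) = \opn{E}(K)_{i, 1} + \opn{E}(K)_{i, 2}$ for each $i$ is precisely Lemma \ref{lem:65}. Fourth, finite potency of every operator in $\bigcap_{i, j} \opn{E}(K)_{i, j}$ is exactly Lemma \ref{lem:100} with $M = K$.

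There is no substantive obstacle: the section has been organized so that every closure property, ideal property, decomposition and finite potency assertion is isolated in its own lemma, and the theorem is their concatenation. The only fine point, used both in the inclusion $\opn{E}(K) \subset \opn{End}^{\mrm{cont}}_{\k}(K)$ and in invoking Lemma \ref{lem:44}, is the identification of the fine $K$-module topology on the rank-one free module $K$ with the given TLF topology on $K$; this is immediate from the definition of the fine topology together with the observation that the identity map $K \to K$ is a topological isomorphism.
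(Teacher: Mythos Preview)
Your proposal is correct and follows essentially the same approach as the paper: the paper's proof is the two-line statement that assertion (1) combines Lemmas \ref{lem:66}, \ref{lem:65} and \ref{lem:100}, while assertion (2) combines Lemmas \ref{lem:52}, \ref{lem:60} and \ref{lem:67}. Your write-up is simply a more explicit unpacking of this, and you are right to invoke Lemma \ref{lem:44} for the inclusion $\opn{E}(K) \subset \opn{End}^{\mrm{cont}}_{\k}(K)$, which the paper's terse citation list omits.
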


\begin{proof}
Assertion (1) is a combination of Lemmas \ref{lem:66}, \ref{lem:65} and 
\ref{lem:100}. 
Assertion (2) is a combination of Lemmas \ref{lem:52}, \ref{lem:60} and 
\ref{lem:67}. 
\end{proof}

\begin{rem}
It would be good to have a structural understanding of the objects 
$\opn{E}(K)$ and $\opn{E}(K)_{i, j}$ associated to a TLF $K$. 
For instance, does $\opn{E}(K)$ carry a canonical structure of ST ring, or 
perhaps some ``higher semi-topological structure''?
Such a structure could help in proving Conjecture \ref{conj:202}.
\end{rem}

\begin{rem}
Osipov \cite{Os2} introduced the categories $\cat{C}_n$, $n \in \N$, that fiber 
over $\cat{Mod} \k$. These categories are defined inductively, in a way that 
closely resembles Beilinson's definitions in \cite{Be1}. The paper \cite{BGW} 
introduced the categories $\cat{Tate}_n$ of {\em $n$-Tate spaces}, also fibered 
over $\cat{Mod} \k$. Presumably these two concepts coincide. 

Let $K$ be an $n$-dimensional TLF over $\k$. It seems likely that $K$ should 
have a canonical $\cat{C}_n$-structure, or a canonical 
$\cat{Tate}_n$-structure. Moreover, the subrings 
$\opn{End}_{\cat{C}_n}(K)$,
$\opn{End}_{\cat{Tate}_n}(K)$ and $\opn{E}(K)$ of $\opn{End}_{\k}(K)$ should 
coincide.  

If that is the case, then some of our statements above become similar or 
equivalent to some results in \cite{Os2}. For instance, our Lemma \ref{lem:44} 
corresponds to \cite[Proposition 3]{Os2}. 
\end{rem}

\section{Residues} \label{sec:resid}

In this section we provide background for Conjecture \ref{conj:200}.
The base ring $\k$ is a perfect field, and it has the discrete topology.

Recall the way the DG ring of separated differential forms 
$\Om^{\mrm{sep}}_{A / \k} = \bigoplus_{i \geq 0} \Om^{i, \mrm{sep}}_{A / \k}$ 
was defined in Section \ref{sec:CDOs}
for any commutative ST $\k$-ring $A$.  
The usual module of K\"ahler differentials $\Om^{i}_{A / \k}$ is a ST 
$\k$-module, with topology induced from the surjection 
$\opn{T}^{i + 1}_{\k}(A) \surj \Om^{i}_{A / \k}$.
Then 
$\Om^{i, \mrm{sep}}_{A / \k} := (\Om^{i}_{A / \k})^{\mrm{sep}}$
is the associated separated ST module. In degree $0$ we have 
$\Om^{0, \mrm{sep}}_{A / \k} = A^{\mrm{sep}}$. 
There is a canonical surjection of DG ST $\k$-rings
\begin{equation} \label{eqn:71}
\tau_A : \Om^{}_{A / \k} \surj \Om^{\mrm{sep}}_{A / \k} 
\end{equation}
which is a topological strict epimorphism. Given any homomorphism $f : A \to B$ 
in the category $\cat{STRing}_{\mrm{c}} \k$, 
there is an induced commutative diagram of DG ST $\k$-rings
\[ \UseTips \xymatrix @C=10ex @R=6ex {
\Om^{}_{A / \k}
\ar[r]^{\Om(f)}
\ar@{->>}[d]_{\tau_A}
&
\Om^{}_{B / \k}
\ar@{->>}[d]^{\tau_B}
\\
\Om^{\mrm{sep}}_{A / \k} 
\ar[r]^{ \Om^{\mrm{sep}}(f) }
&
\Om^{\mrm{sep}}_{B / \k} 
} \]

Let $K$ be an $n$-dimensional TLF over $\k$,
with its DG ST ring of separated differential forms
$\Om^{\mrm{sep}}_{K / \k} = \bigoplus_{i = 0}^n \Om^{i, \mrm{sep}}_{K / \k}$.
In degree $0$ we have 
$\Om^{0, \mrm{sep}}_{K / \k} = K$, since $K$ is separated (in fact it is a 
complete ST $\k$-module). In degree $n$ the $K$-module 
$\Om^{n, \mrm{sep}}_{K / \k}$ is free of rank $1$ with the fine $K$-module 
topology. If $\bsym{a} = (a_1, \ldots, a_n)$ is a system of uniformizers for 
$K$, then the element 
\begin{equation} \label{eqn:70}
\opn{dlog}(\bsym{a}) := a_1^{-1} \cd \d(a_1) \cdots a_n^{-1} \cd \d(a_n)
\end{equation}
is a basis of $\Om^{n, \mrm{sep}}_{K / \k}$. 
Cf.\ Theorem \ref{thm:25} and Example \ref{exa:69}.

There is a theory of trace homomorphisms for separated differential forms.
For any morphism $f : K \to L$ in $\cat{TLF}^n / \k$, there is a homomorphism 
\begin{equation} \label{eqn:73}
\opn{Tr}^{\mrm{TLF}}_{L / K} : \Om^{\mrm{sep}}_{L / \k} \to 
\Om^{\mrm{sep}}_{K / \k} . 
\end{equation}
This is a degree $0$ homomorphism of DG ST 
$\Om^{\mrm{sep}}_{K / \k}$-modules. 
It is uniquely characterized by these properties: it is functorial; in 
degree $0$ it coincides 
with the usual trace $\opn{tr}_{L / K} : L \to K$; and 
\[ \opn{Tr}^{\mrm{TLF}}_{L / K} \circ \opn{dlog} = 
\opn{dlog} \circ \opn{n}_{L / K} \]
as functions $L \to \Om^{1, \mrm{sep}}_{K / \k}$, where 
$\opn{n}_{L / K} : L \to K$ is the usual norm. The homomorphism 
$\opn{Tr}^{\mrm{TLF}}_{L / K}$ is nondegenerate in top degree, in the sense 
that the induced homomorphism 
\[ \Om^{n, \mrm{sep}}_{L / \k} \to 
\opn{Hom}_{K}(L, \Om^{n, \mrm{sep}}_{K / \k}) \]
is bijective. See \cite[Section 2.3]{Ye1}.

In \cite[Section 2.4]{Ye1} we introduced the residue functional for TLFs. Its 
properties are summarized in the following theorem. 

\begin{thm}[\cite{Ye1}] \label{thm:70}
Let $K$ be an $n$-dimensional TLF over $\k$. There is a $\k$-linear 
homomorphism 
\[ \opn{Res}^{\mrm{TLF}}_{K / \k} : \Om^{n, \mrm{sep}}_{K / \k} \to  \k \]
with these properties. 
\begin{enumerate}
\item Continuity\tup{:} the homomorphism $\opn{Res}^{\mrm{TLF}}_{K / \k}$ is 
continuous.
\item Uniformization\tup{:} let $\bsym{a} = (a_1, \ldots, a_n)$ be a system of 
uniformizers for $K$, and let $\k' \to \OO(K)$ be the unique $\k$-ring
lifting of the last residue field $\k' := \kk_n(K)$ into the ring of integers
$\OO(K)$ of $K$. Then for any $b \in \k'$ and any 
$i_1, \ldots, i_n \in \Z$ we have 
\[ \opn{Res}^{\mrm{TLF}}_{K / \k} \bigl( b \cdot
a_1^{i_1} \cdots a_n^{i_n} \cd \opn{dlog}(\bsym{a}) \bigr) = 
\begin{cases}
\opn{tr}_{\k' / \k}(b) & \tup{ if } \ i_1 = \cdots = i_n = 0 
\\
0 & \tup{ otherwise } . 
\end{cases}  \]
\item Functoriality\tup{:} let $f : K \to L$ be a morphism in the category 
$\cat{TLF}^n \k$. Then 
\[ \opn{Res}^{\mrm{TLF}}_{L / \k} = 
\opn{Res}^{\mrm{TLF}}_{K / \k} \ \circ \ 
\opn{Tr}^{\mrm{TLF}}_{L / K} . \]
\item Nondegeneracy\tup{:} the residue pairing 
\[ \bra{-,-}_{\mrm{res}} :  K \times \Om^{n, \mrm{sep}}_{K / \k} \to \k 
\ , \quad
\bra{a, \al}_{\mrm{res}} :=  \opn{Res}^{\mrm{TLF}}_{K / \k}(a \cd \al) \]
is a topological perfect pairing. 

\end{enumerate}

Furthermore, the function $\opn{Res}^{\mrm{TLF}}_{K / \k}$ is the uniquely 
determined by properties \tup{(1)} and \tup{(2)}. 
\end{thm}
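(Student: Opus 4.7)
The plan is to construct $\opn{Res}^{\mrm{TLF}}_{K/\k}$ explicitly via a parametrization, verify the four properties, and derive uniqueness from continuity plus density. Pick a system of uniformizers $\bsym{a} = (a_1,\ldots,a_n)$ for $K$, let $\k' := \kk_n(K)$, and let $\si : \k' \to \OO(K)$ be the unique lifting. By Theorem \ref{thm:25} this data gives a parametrization $f : \k'((t_1,\ldots,t_n)) \iso K$. By the Cauchy-series description from the exercise in Section \ref{sec:ST} and Example \ref{exa:69}, every form $\al \in \Om^{n,\mrm{sep}}_{K / \k}$ has a unique convergent expansion
\[ \al = \sum_{\bsym{i} \in \Z^n} \si(b_{\bsym{i}}) \cd \bsym{a}^{\bsym{i}} \cd \opn{dlog}(\bsym{a}) , \qquad b_{\bsym{i}} \in \k' , \]
with the collection $\{ b_{\bsym{i}} \bsym{a}^{\bsym{i}} \}$ Cauchy. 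I would set
\[ \opn{Res}^{\mrm{TLF}}_{K/\k, \bsym{a}, \si}(\al) := \opn{tr}_{\k'/\k}(b_{\bsym{0}}) . \]
Continuity (1) and property (2) are immediate from this definition; nondegeneracy (4) is essentially the orthogonality of distinct monomials together with the fact that the Cauchy expansion identifies $K$ and $\Om^{n, \mrm{sep}}_{K/\k}$ with mutually dual products/coproducts of copies of $\k'$.

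The main obstacle is showing that this definition is independent of the choice of $(\bsym{a}, \si)$, which is the content of Theorem \ref{thm:25} combined with a change-of-variables statement. Any two such choices are intertwined by a continuous $\k$-automorphism of the standard TLF $\k'((t_1,\ldots,t_n))$, so it suffices to verify that any such automorphism $g$ preserves the constant-term-of-$\opn{dlog}(\bsym{t})$ functional. I would reduce to three elementary cases by filtering automorphisms through the tower $\kk_n(K) \subset \kk_{n-1}(K) \subset \cdots \subset \kk_0(K) = K$: (a) automorphisms fixing the uniformizers and $\si(\k')$, which are trivial; (b) automorphisms of the form $t_i \mapsto u_i \cd t_i$ with $u_i$ a unit, handled by the identity $\opn{dlog}(u_i t_i) = \opn{dlog}(u_i) + \opn{dlog}(t_i)$ together with induction on $n$ to dispose of the $\opn{dlog}(u_i)$ term; (c) automorphisms of the form $t_i \mapsto t_i + (\text{higher order})$, handled by expanding the Jacobian as a geometric series and checking that every resulting monomial in $\bsym{t}$ has nonzero multi-index, so makes no contribution to the constant term. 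This Jacobian/logarithmic calculation is the technical heart of the proof and is where one uses that the topology is semi-topological but the expansion is still Cauchy.

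Functoriality (3) then follows by choosing a system of uniformizers of $K$ and extending to one for $L$ compatibly with the characterization of $\opn{Tr}^{\mrm{TLF}}_{L/K}$ on the $\opn{dlog}$ basis; both $\opn{Res}^{\mrm{TLF}}_{L/\k}$ and $\opn{Res}^{\mrm{TLF}}_{K/\k} \circ \opn{Tr}^{\mrm{TLF}}_{L/K}$ agree on the spanning monomials of property (2), so by continuity (1) they agree on all of $\Om^{n,\mrm{sep}}_{L/\k}$. Uniqueness is the easiest part: the monomials appearing in (2) span a dense $\k$-subspace of $\Om^{n,\mrm{sep}}_{K/\k}$ (again by the Cauchy-series description), and a continuous $\k$-linear functional whose values on a dense subset are prescribed by (2) is determined. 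The delicate issue throughout will be verifying that all implicit limits and rearrangements are legal in the semi-topological setting, which is why one invokes the explicit Cauchy expansion rather than working purely axiomatically.
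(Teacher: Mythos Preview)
The paper does not prove this theorem; it is quoted from \cite{Ye1} (note the citation in the theorem header), so there is no proof here to compare against directly. Your overall strategy---define the residue via a chosen parametrization as the trace of the constant Laurent coefficient, then prove independence of the parametrization---is the natural one and is in the spirit of \cite{Ye1}. The uniqueness argument via density of the monomials is correct and is exactly how it is used later in the paper (see the proof of Proposition~\ref{prop:201}).

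The soft spot is your independence step. Your trichotomy (a)--(c) for continuous $\k$-automorphisms of $\k'((\bsym{t}))$ is not shown to be exhaustive, and the factorization of a general automorphism into these types is left implicit. By Theorem~\ref{thm:25} a continuous automorphism is exactly a choice of system of uniformizers $(g(t_1),\ldots,g(t_n))$, and this data is recursive: $g(t_1)$ is an arbitrary uniformizer of $\OO_1(K)$, while $g(t_2),\ldots,g(t_n)$ are arbitrary lifts to $\OO_1(K)$ of a system of uniformizers of $\kk_1(K)$. Cross-variable perturbations such as $t_2 \mapsto t_2 + t_1$ (for $n=2$) must be accommodated, and you have not said what ``higher order'' means precisely enough to see that they are. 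More seriously, your treatment of case~(c) via ``expanding the Jacobian as a geometric series and checking that every resulting monomial has nonzero multi-index'' is too loose: what must be checked is that the coefficient of $\bsym{t}^{\bsym{0}}$ in $\bsym{a}^{\bsym{i}} \cdot (\opn{dlog}(\bsym{a})/\opn{dlog}(\bsym{t}))$ vanishes for $\bsym{i}\neq\bsym{0}$ and equals $1$ for $\bsym{i}=\bsym{0}$, and the bookkeeping for this in the ST setting (where convergence is governed by the iterated Laurent topology, not a single adic one) is the actual work. The construction in \cite[Section~2.4]{Ye1} sidesteps a global Jacobian calculation by building the residue inductively through the tower $K \to \kk_1(K) \to \cdots \to \kk_n(K)$, i.e.\ as a composite of one-variable residues $\Om^{n,\mrm{sep}}_{K/\k} \to \Om^{n-1,\mrm{sep}}_{\kk_1(K)/\k} \to \cdots \to \k$; this reduces the independence question at each stage to the classical one-variable statement and makes the functoriality~(3) and nondegeneracy~(4) transparent. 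Your approach can be made to work, but as written the automorphism analysis is a sketch rather than a proof.
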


\begin{rem} \label{rem:74}
Actually the residue homomorphism $\opn{Res}^{\mrm{TLF}}_{- / -}$ exists in a 
much greater generality. Recall from Remark \ref{rem:30} that there is a 
category $\cat{TLF} \k$ whose objects are TLFs of all dimensions, and there 
are morphisms $f : K \to L$ for $\opn{dim}(K) < \opn{dim}(L)$. 
The category $\cat{TLF}^n  \k$ is a full subcategory of $\cat{TLF} \k$.
In \cite[Section 2.4]{Ye1} we construct a residue homomorphism 
\[  \opn{Res}^{\mrm{TLF}}_{L / K} : \Om^{\mrm{sep}}_{L / \k} \to  
\Om^{\mrm{sep}}_{K / \k} \]
for any morphism $K \to L$ in $\cat{TLF} \k$.
This is a DG ST $\Om^{\mrm{sep}}_{K / \k}$-linear homomorphism of degree 
$-m$, where $m := \opn{dim}(L) - \opn{dim}(K)$, and it has 
properties like those in 
Theorem \ref{thm:70}. When $K = \k$ this is the residue homomorphism 
$\opn{Res}^{\mrm{TLF}}_{L / \k}$ above; and when $m = 0$ this is the trace 
homomorphism:
$\opn{Res}^{\mrm{TLF}}_{L / K} = \opn{Tr}^{\mrm{TLF}}_{L / K}$. 

Another remark is a sign change: the uniformization formula above 
differs from that of \cite[Theorem 2.4.3]{Ye1} by a factor of 
$(-1)^{\binom{n}{2}}$. This is disguised as a permutation of the factors 
of the differential form $\opn{dlog}(t_1, \ldots, t_n)$. Cf.\ also 
\cite[Remark 2.4.4]{Ye1}. Our better acquaintance 
recently with DG conventions dictates the current formula.
\end{rem}

Let $K$ be a TLF over $\k$ of dimension $n \geq 1$. 
The homological algebra / Lie algebra construction of \cite{Be1},
as explained in \cite[Section 3.1]{Br2}, takes 
as input the cubically decomposed ring of BT operators
$\mrm{E}(K)$ from Definition \ref{dfn:66}, and produces the {\em 
Beilinson-Tate residue functional}
\begin{equation} \label{eqn:80}
\opn{Res}^{\mrm{BT}}_{K / \k} : \Om^n_{K / \k} \to \k . 
\end{equation}

Not much is known about this residue functional when $n \geq 2$. 
We have already posed Conjecture \tup{\ref{conj:200}}, comparing 
$\opn{Res}^{\mrm{BT}}_{K / \k}$ to $\opn{Res}^{\mrm{TLF}}_{K / \k}$.
Here is another conjecture. 

\begin{conj} \label{conj:202}
Let $K$ be a TLF over $\k$. 
The $\k$-linear functional $\opn{Res}^{\mrm{BT}}_{K / \k}$ is continuous. 
\end{conj}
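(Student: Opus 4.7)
Plan: I would reduce the continuity of $\opn{Res}^{\mrm{BT}}_{K/\k}$ to two intermediate claims: (a) $\opn{Res}^{\mrm{BT}}_{K/\k}$ vanishes on $\ker(\tau_K) = \overline{\{0\}} \subset \Om^n_{K/\k}$, so it factors as $\opn{Res}^{\mrm{BT}}_{K/\k} = \bar R \circ \tau_K$ for a unique $\k$-linear $\bar R : \Om^{n, \mrm{sep}}_{K/\k} \to \k$; and (b) this induced functional $\bar R$ is continuous. Since $\tau_K$ is a (continuous) strict epimorphism, (a) and (b) together deliver the conjecture. Note that (a) and (b) are essentially equivalent to Conjecture \ref{conj:200}: if in addition $\bar R$ satisfies the uniformization formula of Theorem \ref{thm:70}(2), then the uniqueness clause of that theorem forces $\bar R = \opn{Res}^{\mrm{TLF}}_{K/\k}$.

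For (a), one analyzes the Hochschild/Lie-homology construction of $\opn{Res}^{\mrm{BT}}$ from \cite{Br2}. For $\al = a_0\, \d a_1 \cdots \d a_n$, the residue is an alternating sum of traces of finite-potent operators in $\mrm{E}(K)$ built from the $a_i$'s and choices of decompositions $a_i = a_i^{(1)} + a_i^{(2)} \in \mrm{E}(K)_{i,1} + \mrm{E}(K)_{i,2}$. Since each $a \in K$ acts on $K$ by continuous multiplication --- hence lies in $\DD^{\mrm{cont}}_{K/\k} \subset \mrm{E}(K)$ by Theorem \ref{thm:103}(2) --- the construction depends only on the topological data of the $\d a_i$. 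The task is to exhibit, for $\al \in \ker(\tau_K)$, the corresponding Hochschild cycle as a boundary under the cubical commutator operations that define the BT trace, so that the final trace vanishes.

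For (b), having reduced to $\bar R$, fix a system of uniformizers $\bsym{a} = (a_1,\dots,a_n)$; by Theorem \ref{thm:25} the form $\opn{dlog}(\bsym{a})$ is a $K$-basis of the rank-one free $K$-module $\Om^{n,\mrm{sep}}_{K/\k}$, which carries the fine $K$-module topology. Continuity of $\bar R$ is then equivalent to continuity of the $\k$-linear map $\psi : K \to \k$ defined by $\psi(b) := \opn{Res}^{\mrm{BT}}_{K/\k}\bigl(b \cdot \opn{dlog}(\bsym{a})\bigr)$. Expanding any $b \in K$ as a Cauchy sum $b = \sum_{\bsym{i} \in \Z^n} b_{\bsym{i}}\, \bsym{a}^{\bsym{i}}$ (the Exercise following Definition \ref{dfn:41}) and invoking the commutator identities governing the cubical decomposition, I would show term-by-term that $\psi(\bsym{a}^{\bsym{i}}) = 0$ unless $\bsym{i} = \bsym{0}$; this is precisely the uniformization formula, and it would yield both continuity and (via Theorem \ref{thm:70}) Conjecture \ref{conj:200} simultaneously.

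The main obstacle is (a): converting the topological closure $\overline{\{0\}}$ into commutator identities visible to the purely algebraic BT trace formula. The cleanest route may in fact be to bypass (a) and attack (b) directly --- verify the uniformization formula for $\opn{Res}^{\mrm{BT}}_{K/\k}$ on the Laurent-monomial forms $b_{\bsym{i}}\, \bsym{a}^{\bsym{i}} \cdot \opn{dlog}(\bsym{a})$, check summability against a Cauchy expansion using the ideals $\mrm{E}(K)_{i,1}$ (``compact'' parts) and $\mrm{E}(K)_{i,2}$ (``discrete'' parts), and then compare the two $\k$-linear functionals $\opn{Res}^{\mrm{BT}}_{K/\k}$ and $\opn{Res}^{\mrm{TLF}}_{K/\k} \circ \tau_K$ on $\Om^n_{K/\k}$ by functoriality of the Hochschild splitting principle of \cite{Br2}. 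Continuity of $\opn{Res}^{\mrm{BT}}_{K/\k}$ then follows from Theorem \ref{thm:70}(1).
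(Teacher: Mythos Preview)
The statement you are attempting to prove is Conjecture \ref{conj:202}, which the paper explicitly leaves \emph{open} --- there is no proof in the paper to compare against. The paper only relates this conjecture to the others via Proposition \ref{prop:201}: Conjecture \ref{conj:200} implies Conjecture \ref{conj:202}, and Conjectures \ref{conj:202} and \ref{conj:204} together imply Conjecture \ref{conj:200}.

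Your proposal is a reasonable sketch of a strategy, and you correctly identify that the problem is essentially equivalent to Conjecture \ref{conj:200}. But it is not a proof: the key steps are precisely the open problems. In part (a) you say ``the task is to exhibit \ldots\ the corresponding Hochschild cycle as a boundary'', but you give no mechanism for doing so, and you yourself flag this as ``the main obstacle''. Your proposed bypass through (b) is circular: ``checking summability against a Cauchy expansion'' of $b = \sum_{\bsym{i}} b_{\bsym{i}}\, \bsym{a}^{\bsym{i}}$ means showing that $\psi(b) = \sum_{\bsym{i}} \psi(b_{\bsym{i}}\, \bsym{a}^{\bsym{i}})$, which is exactly the continuity of $\psi$ you are trying to establish. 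Knowing $\psi$ on Laurent monomials does not determine $\psi$ on $K$ unless continuity is already in hand --- and Example \ref{exa:80} shows how badly this can fail for functionals that agree on monomials. Finally, your appeal to the uniformization formula from \cite{Br2} is the same move the paper makes in the proof of Proposition \ref{prop:201}(2), but there it is made only \emph{conditionally} on Conjecture \ref{conj:204}, since \cite{Br2} works in the geometric setting; without that conjecture it is not established that the analytic BT residue built from Definition \ref{dfn:66} satisfies uniformization.
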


It is closely related to the first conjecture. Indeed:

\begin{prop} \label{prop:201}
\begin{enumerate}
\item Conjecture \tup{\ref{conj:200}} implies Conjecture \tup{\ref{conj:202}}.

\item Conjectures \tup{\ref{conj:202}} and \tup{\ref{conj:204}} together imply 
Conjecture \tup{\ref{conj:200}}.
\end{enumerate}
\end{prop}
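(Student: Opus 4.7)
Part~(1) is almost immediate. By Theorem~\ref{thm:70}(1) the TLF residue
$\opn{Res}^{\mrm{TLF}}_{K/\k}$ is continuous, and by construction the
canonical surjection $\tau_K : \Om^n_{K/\k} \surj \Om^{n,\mrm{sep}}_{K/\k}$
is a continuous strict epimorphism (equation~\eqref{eqn:71}). If the diagram
in Conjecture~\ref{conj:200} commutes, then
$\opn{Res}^{\mrm{BT}}_{K/\k} = \opn{Res}^{\mrm{TLF}}_{K/\k} \circ \tau_K$
is a composition of continuous maps, yielding Conjecture~\ref{conj:202}.

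For part~(2) my plan is to use the uniqueness clause of
Theorem~\ref{thm:70}. First, since $\k$ carries the discrete
(Hausdorff) topology, any continuous $\k$-linear functional on
$\Om^n_{K/\k}$ vanishes on the closure $\ol{\{ 0 \}}$ of zero, and hence
factors uniquely through $\Om^{n, \mrm{sep}}_{K/\k}$. Assuming
Conjecture~\ref{conj:202}, the functional $\opn{Res}^{\mrm{BT}}_{K/\k}$
therefore factors as $\opn{Res}^{\mrm{BT}}_{K/\k} =
\ol{\opn{Res}}^{\mrm{BT}}_{K/\k} \circ \tau_K$ for a continuous
$\ol{\opn{Res}}^{\mrm{BT}}_{K/\k} : \Om^{n, \mrm{sep}}_{K/\k} \to \k$.
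Conjecture~\ref{conj:200} is equivalent to the equality
$\ol{\opn{Res}}^{\mrm{BT}}_{K/\k} = \opn{Res}^{\mrm{TLF}}_{K/\k}$.
By the final (uniqueness) assertion of Theorem~\ref{thm:70}, the latter
equality reduces to verifying the uniformization formula of
Theorem~\ref{thm:70}(2) for $\opn{Res}^{\mrm{BT}}_{K/\k}$, namely computing
its values on the monomial forms
$b \cd a_1^{i_1} \cdots a_n^{i_n} \cd \opn{dlog}(\bsym{a})$
for a system of uniformizers $\bsym{a}$, $b \in \k' := \kk_n(K)$, and
$(i_1, \ldots, i_n) \in \Z^n$.

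To carry out the uniformization check I would invoke
Conjecture~\ref{conj:204}. By Theorem~\ref{thm:25} one has a parametrization
$K \cong \k'((t_1, \ldots, t_n))$, and this standard TLF arises as a local
factor of the Beilinson completion $\kk(x_0)_{\xi}$ for a suitable finite
type $\k$-scheme $X$ and saturated chain $\xi = (x_0, \ldots, x_n)$ in $X$
(e.g.\ an $n$-dimensional affine space over $\k'$ with the obvious flag).
By Conjecture~\ref{conj:204} the analytic and geometric cubically decomposed
rings of operators
$\mrm{E}(\kk(x_0)_{\xi})$ and $\mrm{E}_{X,\xi}(\kk(x_0))$
coincide, and hence so do the residue functionals they produce by the
Beilinson--Tate construction. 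Consequently
$\opn{Res}^{\mrm{BT}}_{\kk(x_0)_{\xi} / \k}$ equals the geometric
Beilinson--Tate residue of \cite{Be1, Br2}, for which the values on
monomial forms $b \cd a_1^{i_1} \cdots a_n^{i_n} \cd \opn{dlog}(\bsym{a})$
are exactly those prescribed by the uniformization formula --- this is the
content of the explicit computation in \cite[Section~3]{Br2}. Restricting
this computation to the TLF factor $K$ of $\kk(x_0)_{\xi}$ yields the
uniformization formula for $\opn{Res}^{\mrm{BT}}_{K/\k}$, which finishes
part~(2).

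The main obstacle in part~(2) is the last step: matching the explicit
formulas for the geometric $\opn{Res}^{\mrm{BT}}_{X, \xi}$ in \cite{Br2}
with the uniformization formula of Theorem~\ref{thm:70}(2) on the nose,
including the sign issue flagged in Remark~\ref{rem:74} and the correct
normalization of the trace $\opn{tr}_{\k'/\k}$ on the coefficient field.
A secondary technical point is the choice of the geometric datum
$(X, \xi)$: one must verify that every $n$-dimensional TLF over $\k$ with
finite last residue field is realizable as a direct factor of some
Beilinson completion $\kk(x_0)_{\xi}$ compatibly with a prescribed system
of uniformizers, but this is a routine consequence of Theorem~\ref{thm:50}
together with Theorem~\ref{thm:25}.
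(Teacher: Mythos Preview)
Your proof is correct and follows essentially the same route as the paper. For part~(1) both arguments are identical. For part~(2) the paper also factors $\opn{Res}^{\mrm{BT}}_{K/\k}$ through $\Om^{n,\mrm{sep}}_{K/\k}$ using continuity and the separatedness of $\k$, then invokes Conjecture~\ref{conj:204} to import the explicit uniformization computation from \cite{Br2} (specifically \cite[Theorem~26(3)]{Br2}); the only cosmetic difference is that the paper concludes by a direct density argument (the monomial forms span a dense $\k$-submodule of $\Om^{n,\mrm{sep}}_{K/\k}$) rather than appealing to the uniqueness clause of Theorem~\ref{thm:70}, and it leaves the geometric realization of $K$ as a Beilinson completion implicit where you spell it out.
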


\begin{proof}
(1) We know that 
$\tau_K : \Om^{n}_{K / \k} \to \Om^{n, \mrm{sep}}_{K / \k}$ 
and $\opn{Res}^{\mrm{TLF}}_{K / \k}$ 
are continuous; cf.\ Theorem \ref{thm:70}(1).

\medskip \noindent 
(2) Assume $\opn{Res}^{\mrm{BT}}_{K / \k}$ is continuous.
Then, since $\k$ is separated, the homomorphism $\opn{Res}^{\mrm{BT}}_{K / \k}$ 
factors via $\tau_K$. It remains to compare the continuous functionals 
\[  \opn{Res}^{\mrm{BT}}_{K / \k} \ , \ \opn{Res}^{\mrm{TLF}}_{K / \k} : \
\Om^{n, \mrm{sep}}_{K / \k}  \to \k  . \]

Conjecture \tup{\ref{conj:204}} says that we can use the results of \cite{Br2}. 
Now according to \cite[Theorem 26(3)]{Br2}, the functional 
$\opn{Res}^{\mrm{BT}}_{K / \k}$ satisfies 
the uniformization condition, i.e.\ condition (2) of Theorem \ref{thm:70}.
Since the $\k$-module spanned by the forms 
\[ b \cdot a_1^{i_1} \cdots a_n^{i_n} \cd \opn{dlog}(\bsym{a}) \]
is dense inside $\Om^{n, \mrm{sep}}_{K / \k}$, and both functionals 
$\opn{Res}^{\mrm{BT}}_{K / \k}$ and $\opn{Res}^{\mrm{TLF}}_{K / \k}$
agree on it, these functionals must be equal.
\end{proof}

To end this section here are some remarks and examples related to the TLF 
residue. 

\begin{rem} \label{rem:77}
The uniqueness of the residue functional 
$\opn{Res}^{\mrm{TLF}}_{K / \k}$ has several other expressions, besides 
properties (1)-(2) of Theorem \ref{thm:70}. 
For simplicity let us assume that $\k$ is infinite and $\kk_n(K) = \k$. 

Here is one alternative characterization. 
Let $G$ be the ``Galois group'' of $K / \k$, namely  
$G := \opn{Aut}_{\cat{TLF}^n \k}(K)$. 
The group $G$ acts on $\Om^{n, \mrm{sep}}_{K / \k}$ by continuous $\k$-linear 
isomorphisms, and hence it acts on 
$\opn{Hom}_{\k}^{\mrm{cont}}(\Om^{n, \mrm{sep}}_{K / \k}, \k)$.
It is not hard to show that 
$\opn{Res}^{\mrm{TLF}}_{K / \k}$ is the only 
$G$-invariant element
$\rho \in \opn{Hom}_{\k}^{\mrm{cont}}(\Om^{n, \mrm{sep}}_{K / \k}, \k)$
that also satisfies 
$\rho(\opn{dlog}(\bsym{a})) = 1$,
where $\bsym{a}$ is any system of uniformizers of $K$.

For the second characterization of the residue functional, let us 
assume that $\opn{char}(\k) = 0$. (This also works in $\opn{char}(\k) = p > 0$, 
but in a more complicated way -- see \cite[Digression 2.4.28]{Ye1}.) Define 
$\opn{H}^n_{\mrm{DR}}(K) := \opn{H}^n(\Om^{\mrm{sep}}_{K / \k})$.
This is a rank $1$ $\k$-module generated by the cohomology class of 
$\opn{dlog}(\bsym{a})$. A calculation shows that  
$\opn{Res}^{\mrm{TLF}}_{K / \k}$ is the only $\k$-linear homomorphism 
$\rho :\Om^{n, \mrm{sep}}_{K / \k} \to \k$
that factors through $\opn{H}^n_{\mrm{DR}}(K)$ (i.e.\ it 
vanishes on $(n - 1)$-coboundaries), and also satisfies 
$\rho(\opn{dlog}(\bsym{a})) = 1$.
\end{rem}

\begin{rem} \label{rem:78}
In dimension $1$ the residue functional on local fields (with its topological 
aspects) was understood a long time ago (cf.\ Serre's book \cite{Se}). 

The first attempt to extend the residue functional to local fields of dimension 
$n \geq 2$ was by Parshin and his school \cite{Pa1, Pa2, Be1, Lo, Pa3}. 
In \cite{Pa1} the case of a surface is discussed, without attempt to 
isolate the resulting $2$-dimensional local field from its geometric origin.
In \cite{Pa3} there is a brief mention of a residue functional on a stand-alone 
$n$-dimensional local field, but without any details whatsoever.
Beilinson \cite{Be1}, quoting \cite{Pa1, Pa2}, 
incorrectly states that the residue functional on an 
$n$-dimensional local field $K$ is independent of the parametrization of $K$ 
(which, according to Theorem \ref{thm:25}, means independent of the topology on 
$K$). 

Lomadze \cite{Lo} studied the setup of a stand-alone $n$-dimensional local 
field in great detail. However, since he misunderstood the role of the topology 
in local fields of dimension $n \geq 2$ (see Remark \ref{rem:31}), the residue 
functional he proposed was not well-defined. To be specific, the paper 
\cite{Lo} claimed that for a local field $K \in \cat{LF}^n \k$ there is a 
$\k$-linear homomorphism, let us denote it by $\opn{res} : \Om^{n}_{K / \k} \to 
\k$, which satisfies continuity, 
uniformization (property (2) of Theorem \ref{thm:70}), and invariance under 
automorphisms of $K$ in $\cat{LF}^n \k$. However this is false for $n \geq 2$ 
and $\opn{char}(\k) = 0$, as was shown by a counterexample in \cite{Ye1}.
We reproduce this counterexample, in an expanded form, in Examples \ref{exa:80}
and \ref{exa:81} below.

In characteristic $p > 0$ the residue functional is indeed well defined on 
the category $\cat{LF}^n \k$. But this is due to the fact, discovered in 
\cite{Ye1}, that the forgetful functor $\cat{TLF}^n \k \to \cat{LF}^n \k$
is an equivalence when $\opn{char}(\k) = p > 0$. See Remark \ref{rem:31}.
\end{rem}

\begin{exa} \label{exa:80}
This is an expanded version of \cite[Example 2.1.24]{Ye1}.
It shows that when $\opn{char}(\k) = 0$ and $n \geq 2$, there cannot be a 
$\k$-linear homomorphism 
$\opn{res} : \Om^{n}_{K / \k} \to \k$, for a local field 
$K \in \cat{LF}^n \k$, which satisfies continuity, uniformization, and  
invariance under automorphisms of $K$ in $\cat{LF}^n \k$. 

Let  $A$ be any commutative ST $\k$-ring. In order to distinguish between an
``abstract'' differential form $\al \in \Om^i_{A / \k}$ and the ``separated'' 
differential form $\tau_A(\al) \in \Om^{i, \mrm{sep}}_{A / \k}$,
we shall write $\bar{\al} := \tau_A(\al)$. Also we denote by $\bar{\d}$ the 
differential operator in the DG ring $\Om^{\mrm{sep}}_{A / \k}$. So 
$\tau_A \circ \d = \bar{\d} \circ \tau_A$,
as $\k$-linear homomorphisms 
$\Om^{i}_{A / \k} \to \Om^{i + 1, \mrm{sep}}_{A / \k}$.
Note that when $A$ itself is separated we have 
$\Om^{0, \mrm{sep}}_{A / \k} = \Om^{0}_{A / \k} = A$.

Since the homomorphism 
$\opn{res} : \Om^{n}_{K / \k} \to \k$
is assumed to be continuous, and $\k$ is separated (because it is discrete),
it follows that $\opn{res}$ factors through $\Om^{n, \mrm{sep}}_{K / \k}$,
and $\opn{res}(\al) = \opn{res}(\bar{\al})$ for any 
$\al  \in \Om^{n}_{K / \k}$.

We shall use the setup of Example \ref{exa:30}. So $\opn{char}(\k) = 0$,
$n = 2$, and $K = \k((t_1, t_2)) = \k((t_2))((t_1))$, 
the standard $2$-dim\-ensional TLF with 
last residue field $\k$. We choose a collection $\{ b_i \}_{i \in I}$ in 
$\k((t_2))$ that is a transcendence basis over the subfield $\k(t_2)$. We 
single out one element of the indexing set, say $i_0 \in I$, and define 
$\si(b_{i_0}) := b_{i_0} + t_1$. For $i \neq i_0$ we let 
$\si(b_{i}) := b_{i}$. This determines an automorphism $f$ of $K$ in the 
category $\cat{LF}^2 \k$. (We already observed in Example \ref{exa:30} that 
$f$ is not continuous). Let us write 
$b := b_{i_0}$; so $f(t_1) = t_1$, $f(t_2) = t_2$ and $f(b) = b + t_1$. 

Define the differential forms 
\[ \al := t_1^{-1} \cd \d(b) \cd t_2^{-1} \cd \d(t_2) , \quad 
\be := t_1^{-1} \cd \d(b + t_1) \cd t_2^{-1} \cd \d(t_2) \]
and  
\[ \ga := t_1^{-1} \cd \d(t_1) \cd t_2^{-1} \cd \d(t_2) = 
\opn{dlog}(t_1, t_2)   \]
in $\Om^{2}_{K / \k}$.
Note that $\be = \al + \ga$ and $\be = f(\al)$. 

Consider the continuous $\k$-linear derivation 
$\frac{\pa}{\pa t_2}$ of $\k((t_2))$. It is dual to the differential form 
$\bar{\d}(t_2) \in \Om^{1, \mrm{sep}}_{\k((t_2)) / \k}$.
Hence, letting 
$b' := \frac{\pa}{\pa t_2} (b) \in \k((t_2))$, 
we have 
$\bar{\d}(b) = b' \cd \bar{\d}(t_2)$ in 
$\Om^{1, \mrm{sep}}_{\k((t_2)) / \k}$. 
Since the inclusion $\k((t_2)) \to K$ is continuous, it follows that 
$\bar{\d}(b) = b' \cd \bar{\d}(t_2)$ in $\Om^{1, \mrm{sep}}_{K / \k}$.
But then $\bar{\d}(b) \cd \bar{\d}(t_2) = 0$ 
in $\Om^{2, \mrm{sep}}_{K / \k}$, from which we deduce that 
$\bar{\al} = 0$ in $\Om^{2, \mrm{sep}}_{K / \k}$.
Therefore $\opn{res}(\al) = \opn{res}(\bar{\al}) =0$. 
On the other hand, 
$\bar{\be} = \bar{\al} + \bar{\ga} = \bar{\ga}$.
And hence 
\[ \opn{res}(\be) = \opn{res}(\bar{\be}) = \opn{res}(\bar{\ga}) =
\opn{res}(\ga) =  1  \]
by the uniformization property. 
We see that $\be = f(\al)$, $\opn{res}(\al) = 0$ and 
$\opn{res}(\be) = 1$.
\end{exa}

\begin{exa} \label{exa:81}
Here is another way to view the previous example. Again $\k$ has characteristic 
$0$. Let $K$ be the local field $\k((t_1, t_2))$. We consider various 
topologies on $K$ that make it into a TLF; namely we are looking at the objects 
in the fiber above $K$ of the forgetful functor 
$F : \cat{TLF}^n \k \to \cat{LF}^n \k$. 
Theorem \ref{thm:25} shows that the group $\opn{Aut}_{\cat{LF}^n \k}(K)$ 
acts transitively on the objects in this fiber. 

The first topology on the local field $K$ is the standard topology of 
$\k((t_1, t_2))$, and we denote the resulting TLF by $K_{\mrm{st}}$.
For the second topology we use the automorphism $f$ from Example \ref{exa:80}.
We take the fine $(K_{\mrm{st}}, f^{-1})$-module topology on $K$, and call the 
resulting TLF $K_{\mrm{nt}}$. Thus $f : K_{\mrm{nt}} \to K_{\mrm{st}}$
is an isomorphism in $\cat{TLF}^n \k$, and 
$F(K_{\mrm{nt}}) = F(K_{\mrm{st}}) = K$ in $\cat{LF}^n \k$.

Let $K_{\mrm{t}}$ be any TLF such that $F(K_{\mrm{t}}) = K$
(for instance the standard TLF $K_{\mrm{st}}$ and the nonstandard TLF 
$K_{\mrm{nt}}$). There is a surjection
\[ \tau_{\mrm{t}} = \tau_{K_{\mrm{t}}} : 
\Om^2_{K / \k} = \Om^2_{K_{\mrm{t}} / \k} \surj 
\Om^{2, \mrm{sep}}_{K_{\mrm{t}} / \k} , \]
and thus a residue homomorphism 
$\opn{res}_{\mrm{t}} : \Om^2_{K / \k} \to \k$
defined by 
$\opn{res}_{\mrm{t}} :=  
\opn{Res}^{\mrm{TLF}}_{K_{\mrm{t}} / \k} \circ \, \tau_{\mrm{t}}$.

Consider the differential forms $\al, \be, \ga \in \Om^2_{K / \k}$
from Example \ref{exa:80}.
The calculation there shows that 
$\tau_{\mrm{st}}(\al) = 0$. On the other hand,
since $f \circ \tau_{\mrm{nt}} = \tau_{\mrm{st}}  \circ f$
and $f(\ga) = \ga$, we have
\[ f(\tau_{\mrm{nt}}(\al)) = \tau_{\mrm{st}}(f(\al)) =
\tau_{\mrm{st}}(\be) = \tau_{\mrm{st}}(\al) + \tau_{\mrm{st}}(\ga) =
\tau_{\mrm{st}}(\ga) = f(\tau_{\mrm{nt}}(\ga)) , \]
and therefore 
$\tau_{\mrm{nt}}(\al) = \tau_{\mrm{nt}}(\ga)$. 
We conclude that for the differential form $\al \in \Om^2_{K / \k}$,
we have $\opn{res}_{\mrm{st}}(\al) = 0$, but 
$\opn{res}_{\mrm{nt}}(\al) = \opn{res}_{\mrm{nt}}(\ga) = 1$.
\end{exa}

\begin{que} \label{que:80}
Take any $n \geq 2$. Consider the local field 
$K := \k((t_1, \ldots, t_n))$, and the various TLFs 
$K_{\mrm{t}}$ lying above it in $\cat{TLF}^n \k$,
as in the previous example. We know that the residue 
$\opn{res}_{\mrm{t}}(\al)$, for $\al \in \Om^n_{K / \k}$,
could change as we change the topology. However our counterexample involved 
transcendentals (the element $b$). 

What about the subfield $\k(t_1, \ldots, t_n) \in K$~?
Is it true that for a form 
$\al \in \Om^n_{\k(t_1, \ldots, t_n) / \k}$
the residue $\opn{res}_{\mrm{t}}(\al)$ is independent of the topology
$K_{\mrm{t}}$ on $K$~?
\end{que}

\section{Geometry: Completions} \label{sec:geom}

In this section we give background for Conjecture \ref{conj:204}.
We recall some facts on the Beilinson completion operation, and reproduce 
Beilinson's geometric definition of the BT operators. 

Throughout this section $\k$ is a noetherian commutative ring, and $X$ is 
a finite type $\k$-scheme. By a chain of points of length $n$ in $X$ we mean a 
sequence $\xi = (x_0, \ldots, x_n)$ of points in $X$, such that $x_i$ is a 
specialization of $x_{i - 1}$ for all $i$. The chain $\xi$ is called a {\em 
saturated chain} if every $x_i$ is an immediate specialization of $x_{i - 1}$, 
namely the closed set $\ol{ \{ x_i \}}$ has codimension $1$ in 
$\ol{ \{ x_{i - 1} \}}$.
If $n \geq 1$, we denote by $\d_0(\xi)$ the chain gotten from 
$\xi$ by deleting the point $x_0$. 

Let $\MM$ be a quasi-coherent $\OO_X$-module. Beilinson \cite{Be1} introduced 
the completion $\MM_{\xi}$ of $\MM$ along $\xi$, which we refer to as the {\em 
Beilinson completion}. This is a very special case of 
his higher adeles. The definition of $\MM_{\xi}$ is inductive on $n$, by an 
$n$-fold zig-zag of inverse and direct limits. For a detailed account 
see \cite[Section 3]{Ye1} or \cite{Mo}. 
A basic geometric fact used in the definition is that 
for any coherent sheaf $\MM$, point $x \in X$, and number $i \in \N$, the 
truncated localization $\MM_{x} / \m_{x}^{i+1} \MM_x$, when viewed as an 
$\OO_X$-module supported on the closed set $\ol{ \{ x \}}$, is quasi-coherent.
An important instance of this is when $\MM = \OO_X$ and $i = 0$, 
which gives the residue field 
$\kk(x) = \OO_{X, x} / \m_x$. 

Here are some important properties of the Beilinson completion operation. 
Let $\MM$ be some quasi-coherent $\OO_X$-module and let 
$\xi =  (x_0, \ldots, x_n)$ be a chain in $X$. 
We can view the completion $\MM_{\xi}$ either as a module over the local ring 
$\OO_{X, x_n}$, or as a constant $\OO_X$-module supported on the closed set 
$\ol{ \{ x_n \}}$. Warning: $\MM_{\xi}$ is usually not quasi-coherent.
For any subchain $\xi' \subset \xi$ there is a canonical 
homomorphism $\MM_{\xi'} \to \MM_{\xi}$.
When $n = 0$, so $\xi = (x_0)$, there is a canonical homomorphism 
$\MM_{x_0} \to \MM_{(x_0)}$, where the former is the stalk at the point.
If $\MM$ is coherent, then the homomorphism 
$\what{\MM}_{x_0} \to \MM_{(x_0)}$ from the $\m_{x_0}$-adic completion 
is an isomorphism. 
 
The completion $\OO_{X, \xi}$ of the structure sheaf $\OO_X$ 
is a commutative ring, the  canonical sheaf homomorphism $\OO_{X} \to 
\OO_{X, \xi}$ is flat, and $\MM_{\xi}$ is an $\OO_{X, \xi}$-module. 
The sheaf homomorphism 
$\OO_{X, \xi} \ot_{\OO_{X}} \MM \to \MM_{\xi}$
is an isomorphism. Thus the functor $\MM \mapsto \MM_{\xi}$ is exact. 
If $\MM$ is coherent, $\xi$ is saturated, and $n \geq 1$, then the canonical 
homomorphism
\[ \OO_{X, x_0} \ot_{\OO_{X}} \MM_{\d_0(\xi)}  \to \MM_{\xi}  \]
is an isomorphism. 

The zig-zag completion operation endows $\MM_{\xi}$ with a $\k$-linear 
topology, similar to the iterated Laurent series construction in Definition 
\ref{dfn:41}. The ring $\OO_{X, \xi}$ becomes a ST $\k$-ring, and $\MM_{\xi}$ 
is 
a ST $\OO_{X, \xi}$-module. 

Let $A$ be a semi-local commutative ring, with Jacobson radical $\r$. We say 
that $A$ is a {\em complete semi-local ring} if the canonical homomorphism 
$A \to \lim_{\leftarrow i} A / \r^i$ is bijective. 
The {\em residue ring} of $A$ is the ring $A / \r$, which is a finite product 
of fields. 

\begin{thm}[\cite{Pa1}, \cite{Be1}, \cite{Ye1}] \label{thm:50}
Let $\k$ be an excellent noetherian ring, let $X$ be a finite type $\k$-scheme, 
and let $\xi = (x_0, \ldots, x_n)$ be a saturated chain in $X$ of length 
$n \geq 1$, such that $x_n$ is a closed point. 
Then the Beilinson completions $\OO_{X, \xi}$ and $\kk(x_0)_{\xi}$
have these algebraic properties\tup{:}
\begin{enumerate}
\item The ring $\kk(x_0)_{\xi}$ is a finite product of $n$-dimensional 
local fields over $\k$.
\item The ring $\OO_{X, \xi}$ is a complete semi-local commutative $\k$-ring, 
with Jacobson radical $\r = \OO_{X, \xi} \ot_{\OO_{X, x_0}} \m_{x_0}$
and residue ring $\kk(x_0)_{\xi}$.
\item Let $K$ be one of the factors of the reduced artinian semi-local ring 
$\kk(x_0)_{\xi}$, which by \tup{(1)} is an $n$-dimensional local field. 
The DVR $\OO_1(K)$ is the integral closure in $K$ of the ring 
$\OO_{X, \d_0(\xi)}$.
\end{enumerate}
If the base ring $\k$ is a perfect field, then the completion $\kk(x_0)_{\xi}$
also has these topological properties\tup{:}
\begin{enumerate} 
\setcounter{enumi}{3}
\item Let $K$ be one of the factors of the ring $\kk(x_0)_{\xi}$.
Then $K$, with the induced topology from $\kk(x_0)_{\xi}$, is an 
$n$-dimensional TLF over $\k$. 
\item The image of the field $\kk(x_0)$ in the ST ring $\kk(x_0)_{\xi}$ is 
dense. 
\end{enumerate}
\end{thm}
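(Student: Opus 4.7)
The plan is to proceed by induction on the chain length $n$, exploiting the recursive formula $\MM_{\xi} \iso \OO_{X, x_0} \ot_{\OO_X} \MM_{\d_0(\xi)}$ for saturated chains and coherent $\MM$, combined with classical completion theory for $1$-dimensional excellent local domains.

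For the base case $n = 1$, write $\xi = (x_0, x_1)$ and let $Y := \ol{\{ x_0 \}}^{\mrm{red}}$. The local ring $\OO_{Y, x_1}$ is a $1$-dimensional excellent local domain with fraction field $\kk(x_0)$; by excellence its integral closure $\til{\OO}$ is a semi-local Dedekind domain, finite over $\OO_{Y, x_1}$. Completing $\til{\OO}$ at its finitely many maximal ideals and inverting a uniformizer produces a finite product of complete discretely valued fields, and unwinding the Beilinson zig-zag identifies this product with $\kk(x_0)_{(x_0, x_1)}$; a parallel analysis at the level of $\OO_X$ (rather than $\kk(x_0)$) yields (2) and (3) in this case.

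For the inductive step with $n \geq 2$, apply the recursive formula to write $\kk(x_0)_{\xi} \iso \OO_{X, x_0} \ot_{\OO_X} \kk(x_0)_{\d_0(\xi)}$. The induction hypothesis, applied to $\d_0(\xi) = (x_1, \ldots, x_n)$ and to the coherent sheaf $\kk(x_0)$ (coherent because $X$ is noetherian), identifies $\kk(x_0)_{\d_0(\xi)}$ with a finite product $\prod_m L_m$ of $(n - 1)$-dimensional local fields, each equipped with the required first valuation ring. Tensoring with $\OO_{X, x_0}$ then reduces the remaining structural analysis on each factor to the $n = 1$ case applied to the induced $1$-dimensional step sitting inside $L_m$. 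This produces a finite product of $n$-dimensional local fields, giving (1); careful bookkeeping of Jacobson radicals and integral closures through the tensor-product step gives (2) and (3).

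For the topological assertions (4) and (5), now assume $\k$ is a perfect field. The zig-zag defining $\MM_{\xi}$ is carried out in $\cat{STRing}_{\mrm{c}} \k$: the inverse-limit steps use the $\m$-adic completion topology (where the truncations are precise artinian, cf.\ Lemma \ref{lem:25}), while the localization steps use the fine module topology of Definition \ref{dfn:41}. An inductive comparison shows that each factor $K$ of $\kk(x_0)_{\xi}$, with the induced topology, admits a system of uniformizers $(a_1, \ldots, a_n)$ obtained by lifting one from the $\d_0(\xi)$-stage and prepending a uniformizer of the final $1$-dimensional step; Theorem \ref{thm:25} then upgrades the unique $\k$-ring lifting of the last residue field $\k' := \kk(x_n)$ into $\OO(K)$ to an isomorphism $\k'((t_1, \ldots, t_n)) \iso K$ which is simultaneously a morphism in $\cat{LF}^n \k$ and in $\cat{STRing}_{\mrm{c}} \k$, verifying condition (P) of Definition \ref{dfn:12} and establishing (4). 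Density (5) is maintained through each zig-zag step because polynomial expansions in a uniformizer with coefficients drawn from a dense subset form a cofinal system in the neighborhood filter of $0$. The principal obstacle is the matching in (4): one must check that the iteratively constructed ST-ring topology on $\kk(x_0)_{\xi}$ restricts on each factor to precisely the TLF topology, neither coarser nor finer. This compatibility at every level of the induction is the technical heart of \cite[Section 3]{Ye1}, which I would invoke rather than rebuild from scratch.
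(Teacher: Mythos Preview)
The paper's own proof is purely bibliographic: it cites the classical case for $n = 1$, Parshin for $n = 2$, and \cite[Theorem 3.3.2, Corollary 3.3.5, Proposition 3.3.6, Corollary 3.3.7]{Ye1} for the general statements. So your sketch is already far more detailed than what the paper offers, and in spirit it is the right outline --- induction on chain length via the recursive structure of the Beilinson completion is indeed how \cite{Ye1} proceeds.

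That said, your inductive step contains a genuine slip. You write $\kk(x_0)_{\xi} \cong \OO_{X,x_0} \otimes_{\OO_X} \kk(x_0)_{\d_0(\xi)}$ and then claim the induction hypothesis identifies $\kk(x_0)_{\d_0(\xi)}$ with a product of $(n-1)$-dimensional local fields. But $\kk(x_0)$ is a finite length $\OO_{X,x_0}$-module, and the paper records (just before Definition \ref{dfn:1}) that for such modules the canonical map $M_{\d_0(\xi)} \to M_\xi$ is already bijective. So $\kk(x_0)_{\d_0(\xi)} = \kk(x_0)_\xi$, your displayed isomorphism is a tautology, and nothing has been reduced. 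The induction hypothesis applies to the chain $\d_0(\xi) = (x_1,\ldots,x_n)$ with \emph{its} initial residue field $\kk(x_1)$: it is $\kk(x_1)_{\d_0(\xi)}$ that is a product of $(n-1)$-dimensional local fields, and $\OO_{X,\d_0(\xi)}$ that is the complete semi-local ring with this residue ring. The passage from dimension $n-1$ to $n$ then comes from analyzing $\kk(x_0) \otimes_{\OO_X} \OO_{X,\d_0(\xi)}$, i.e.\ adjoining the new generic point $x_0$ over the already-understood completion along $\d_0(\xi)$ --- essentially the $n=1$ step performed over each factor of $\kk(x_1)_{\d_0(\xi)}$. (Incidentally, $\kk(x_0)$ is quasi-coherent on $X$ but not coherent unless $x_0$ is closed, so the parenthetical justification is also off, though this is secondary.)

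Since you ultimately defer the topological matching in (4) to \cite[Section 3]{Ye1} anyway, and since the paper does exactly the same, the honest summary is: both proofs are citations to \cite{Ye1}, and your sketch of what happens there is broadly right but mis-threads the induction at the point just described.
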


\begin{proof}
(1-3) For $n = 1$ this is classical. For $n = 2$ this is in \cite{Pa1}. 
For $n \geq 3$ these assertions appear in \cite{Be1} without a 
proof. The proofs are \cite[Theorem 3.3.2]{Ye1} and 
\cite[Corollary 3.3.5]{Ye1}.

\medskip \noindent
(4-5) For $n = 1$ this is classical. For $n \geq 2$ these assertions 
are  \cite[Proposition 3.3.6]{Ye1} and \cite[Corollary 3.3.7]{Ye1}
\end{proof}

\begin{rem} \label{rem:71}
The condition that $x_n$ is a closed point is only important to ensure 
that the last residue fields $\kk_n(K)$ are finite over $\k$. Cf.\ Remark 
\ref{rem:70}. The results in \cite{Ye1} quoted in the proof above only require
the chain $\xi$ to be saturated.
\end{rem}

Suppose $\xi = (x_0, \ldots, x_n)$ is a saturated chain in $X$. We have 
seen that there is a commutative diagram of flat ring homomorphisms 
\[ \UseTips \xymatrix @C=5ex @R=5ex {
\OO_{X, x_n}
\ar[r]
\ar[d]
&
\OO_{X, (x_n)}
\ar[dr]
\\
\OO_{X, x_0}
\ar[r]
&
\OO_{X, (x_0)}
\ar[r]
&
\OO_{X, \xi}
} \]

\begin{dfn}
Let $\xi = (x_0, \ldots, x_n)$ be a saturated chain in $X$ of length $n \geq 1$,
and let $M$ be a finite length $\OO_{X, x_0}$-module. An $\OO_{X, x_1}$-lattice 
in $M$ is a finite $\OO_{X, x_1}$-submodule $L$ of $M$, such that 
$M = \OO_{X, x_0} \cd L$. We denote by 
$\opn{Lat}_{X, \xi}(M)$ the set of all such lattices. 
\end{dfn}

Of course the points $x_2, \ldots, x_n$ have no influence on 
$\opn{Lat}_{X, \xi}(M)$.
Note that if $\xi$ has length $0$ then $M_{\xi} = M$ for any finite length 
$\OO_{X, x_0}$-module $M$. 

\begin{lem}
Let $\xi = (x_0, \ldots, x_n)$ be a saturated chain in $X$, and 
let $M$ be a finite length $\OO_{X, x_0}$-module. 
If $L, L' \in \opn{Lat}_{X, \xi}(M)$ and $L \subset L'$,
then $L' / L$ is a finite length $\OO_{X, x_1}$-module.
\end{lem}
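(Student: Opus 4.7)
The plan is to reduce the claim to the fact that, under the saturated hypothesis, the quotient $\OO_{X,x_1}/\p$ (where $\p \subset \OO_{X,x_1}$ is the prime corresponding to $x_0$) is a one-dimensional local Noetherian domain, and then to apply a standard lattice-in-a-vector-space argument graded piece by graded piece. Write $R := \OO_{X,x_1}$, let $\p \subset R$ be the prime with $R_\p = \OO_{X,x_0}$, and let $\m$ denote the maximal ideal of $R$. The saturated hypothesis says that $\ol{\{x_1\}}$ has codimension $1$ in $\ol{\{x_0\}}$, and this is precisely the statement that $\dim(R/\p) = 1$. Since $\p$ is prime, $R/\p$ is a one-dimensional local Noetherian domain with fraction field $\kappa(\p) = R_\p/\p R_\p$.

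Next I would note that $L'/L$ is finitely generated over the Noetherian ring $R$, so proving it has finite length amounts to showing its support in $\opn{Spec}(R)$ is $\{\m\}$. To control the support I would filter $M$ by powers of $\p R_\p$: since $M$ has finite length over $R_\p$ there is an integer $N$ with $\p^N M = 0$, and each successive quotient $M_i := \p^i M /\p^{i+1} M$ is a finite-dimensional $\kappa(\p)$-vector space. Intersecting with $L$ and $L'$ induces a finite filtration of $L'/L$ whose successive quotients fit into the form
\[ \overline{L'_i}/\overline{L_i} , \qquad \overline{L'_i} := (L'\cap \p^i M)/(L'\cap \p^{i+1} M), \]
with $\overline{L_i}$ the image of $L\cap \p^i M$ inside $\overline{L'_i}$. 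Each $\overline{L'_i}$ and $\overline{L_i}$ is annihilated by $\p$, hence is a finitely generated $R/\p$-submodule of the $\kappa(\p)$-vector space $M_i$.

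The crucial point is then that $(\overline{L'_i})_\p = (\overline{L_i})_\p = M_i$: this follows from $L_\p = L'_\p = M$ together with the fact that localization at $\p$ commutes with the finite intersections used to build the filtration. Consequently $\overline{L'_i}/\overline{L_i}$ is a finitely generated $R/\p$-module whose localization at the zero prime of $R/\p$ vanishes, so there exists a nonzero $s \in R/\p$ with $s\cdot(\overline{L'_i}/\overline{L_i}) = 0$. Since $R/\p$ is a one-dimensional local Noetherian domain and $s\neq 0$, the ring $(R/\p)/(s)$ is artinian, and therefore $\overline{L'_i}/\overline{L_i}$ has finite $R/\p$-length — which coincides with its $R$-length because $R$ and $R/\p$ share the same maximal ideal.

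Assembling the finite filtration yields that $L'/L$ has finite length over $R$, as required. The main (really, the only) conceptual hurdle is translating the saturated chain condition into the ring-theoretic statement $\dim(R/\p) = 1$; the absence of this hypothesis would allow $R/\p$ to have dimension $\geq 2$, and then $(R/\p)/(s)$ need no longer be artinian, breaking the argument. Everything else is bookkeeping with a standard $\p$-adic filtration.
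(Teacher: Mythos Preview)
Your proof is correct but more elaborate than necessary. The paper's argument dispenses with the $\p$-adic filtration entirely: since $M$ is killed by $\p^N$, so is $L'$, hence $L'/L$ is already a finitely generated module over $R/\p^N$, whose support lies in $V(\p) = \{\p, \m\}$; then the single observation $(L'/L)_\p = M/M = 0$ forces the support to be $\{\m\}$, and finite length follows. Your filtration and graded-piece analysis essentially reprove this support calculation piecewise --- each $\overline{L'_i}/\overline{L_i}$ is handled by exactly the same ``localize at $\p$, get zero, conclude support is $\{\m\}$'' argument that works directly on $L'/L$ itself. Both routes are valid, and you correctly identify that the saturated hypothesis (equivalently $\dim R/\p = 1$) is the decisive input; the paper's route just reaches the conclusion in one step rather than $N$.
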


\begin{proof}
We can assume that $M \neq 0$. 
Let $Z$ be the support in $\opn{Spec} \OO_{X, x_1}$ of $L$. Then $Z$ is a 
$1$-dimensional scheme, with only two points: the closed point $x_1$ and the 
generic point $x_0$. The finite $\OO_{X, x_1}$-module
$L' / L$ satisfies 
\[ (L' / L)_{x_0} \cong \OO_{X, x_0} \ot_{\OO_{X, x_1}} (L' / L) = 0 , \]
and hence it is supported on $\{ x_1 \}$. 
\end{proof}

Let $\xi = (x_0, \ldots, x_n)$ be a saturated chain in $X$, and let 
$M$ be a finite length $\OO_{X, x_0}$-module. We can 
view $M$ as a quasi-coherent sheaf on $X$, constant on the closed set 
$\ol{\{ x_0 \}}$. The canonical homomorphism 
$M_{\d_0 (\xi)} \to M_{\xi}$ is bijective. (If $n = 0$ then $\d_0 (\xi)$ is 
empty, and we define $M_{()} := M$). 
Note that $M_{\xi}$ is a finite length $\OO_{X, \xi}$-module.

Suppose we are given $\OO_{X, x_1}$-lattices $L \subset L'$ in $M$.
By the exactness of completion there are inclusions 
\[ L_{\d_0 (\xi)} \subset L'_{\d_0 (\xi)} \subset M_{\d_0 (\xi)} = 
M_{\xi} , \]
and there is  a canonical isomorphism of finite length 
$\OO_{X, \d_0 (\xi)}$-modules
\[ (L' / L)_{\d_0 (\xi)} \cong  L'_{\d_0 (\xi)}  / L_{\d_0 (\xi)} . \]

Let $(M_1, M_2)$ be a pair of finite length $\OO_{X, x_0}$-modules. 
We denote by \lb $\opn{Lat}_{X, \xi}(M_1, M_2)$ the set of pairs 
$(L_1, L_2)$, where $L_i \in \opn{Lat}_{X, \xi}(M_i)$.
We write $M_{i, \xi} := (M_i)_{\xi}$.
Suppose $\phi : M_{1, \xi} \to M_{2, \xi}$ is a $\k$-linear operator. 
Like in Definition \ref{dfn:50}, we say that 
$(L'_1, L'_2)$ is a $\phi$-refinement of $(L_1, L_2)$, and that 
$(L'_1, L'_2) \prec_{\phi} (L_1, L_2)$ is a $\phi$-refinement in 
$\opn{Lat}_{X, \xi}(M_1, M_2)$, if 
$L'_1 \subset L_1$, $L_2 \subset L'_2$, 
$\phi(L_{1, \d_0 (\xi)}) \subset L'_{2, \d_0 (\xi)}$ and 
$\phi(L'_{1, \d_0 (\xi)}) \subset L_{2, \d_0 (\xi)}$.

Suppose $A$ is a semi-local ring, with residue ring $K$. Any finite length 
$A$-module $M$ has a canonical decomposition $M = \bigoplus_{\n} M_{\n}$, where 
$\n$ runs over the finite set of maximal ideals of 
$A$, which of course coincides with the set $\opn{Spec} K$.

\begin{dfn}
Let $A$ be a semi-local ring in $\cat{Ring}_{\mrm{c}} \k$,
with residue ring $K$. Let $M_1, M_2$ be finite length $A$-modules, and let 
$\phi : M_1 \to M_2$ be a $\k$-linear homomorphism. We say that $\phi$ is 
{\em local on $\opn{Spec} K$} if 
$\phi(M_{1, \n}) \subset M_{2, \n}$ for every $\n \in \opn{Spec} K$.
\end{dfn}

Here is a slight enhancement of the original definition found in \cite{Be1}; 
cf.\ Remark \ref{rem:50} below and Definition \ref{dfn:5}.

\begin{dfn}[\cite{Be1}] \label{dfn:1}
Let $\xi = (x_0, \ldots, x_n)$ be a saturated chain of points in $X$, such that 
$x_n$ is a closed point. Let $(M_1, M_2)$ be a pair of finite length modules 
over the ring $\OO_{X, x_0}$. We define the subset
\[ \opn{E}_{X, \xi}(M_1, M_2) \subset 
\opn{Hom}_{\k}^{}(M_{1, \xi}, M_{2, \xi})  \]
as follows. 

\begin{enumerate}
\item If $n = 0$, then any $\k$-linear homomorphism 
$\phi :  M_{1, \xi} \to M_{2, \xi}$ belongs to \lb
$\opn{E}_{X, \xi}(M_1, M_2)$.
\item If $n \geq 1$, then a $\k$-linear homomorphism 
$\phi :  M_{1, \xi} \to M_{2, \xi}$ belongs to \lb
$\opn{E}_{X, \xi}(M_1, M_2)$ if it satisfies these three conditions:
\begin{enumerate}
\rmitem{i} Every $(L_1, L_2) \in \opn{Lat}_{X, \xi}(M_1, M_2)$
has some $\phi$-refinement $(L'_1, L'_2)$.
\rmitem{ii} For every $\phi$-refinement 
$(L'_1, L'_2) \prec_{\phi} (L_1, L_2)$ in $\opn{Lat}_{X, \xi}(M_1, M_2)$
the induced homomorphism 
\[ \bar{\phi} : (L_1 / L'_1)_{\d_0(\xi)} \to (L'_2 / L_2)_{\d_0(\xi)}  \]
belongs to 
\[ \mrm{E}_{X, \d_0(\xi)} ( L_1 / L'_1, L'_2 / L_2)  . \] 
\rmitem{iii} The homomorphism $\phi$ is local on $\opn{Spec} \kk(x_0)_{\xi}$.
\end{enumerate}
\end{enumerate}
\end{dfn}

\begin{rem} \label{rem:50}
Condition (2.iii) of Definition \ref{dfn:1} is not part of the original 
definition in \cite{Be1}. 
Note that Tate  \cite{Ta} only considered smooth curves, for which the 
completion is always a single local field, and there is no issue of locality.

The same locality condition eventually appears in Braunling's treatment -- 
see the definition of the ring $E_j$ in \cite[Theorem 26(1)]{Br2}.
\end{rem}

The next definition uses notation like that of Tate. It can of course be 
rewritten using the notations of \cite{Be1} or of \cite{Br1, Br2}. 
Compare to Definition \ref{dfn:73} above.

\begin{dfn}[\cite{Be1}] \label{dfn:3}
Let $\xi = (x_0, \ldots, x_n)$ be a saturated chain of points in $X$ of length 
$n \geq 1$, such that 
$x_n$ is a closed point. Let $(M_1, M_2)$ be a pair of finite length modules 
over the ring $\OO_{X, x_0}$. For any $i \in \{ 1, \dots, n \}$ and 
$j \in \{ 1, 2 \}$ we define the subset
\[ \opn{E}_{X, \xi}(M_1, M_2)_{i, j} \subset \opn{E}_{X, \xi}(M_1, M_2) \]
to be the set of operators $\phi : M_{1, \xi} \to M_{2, \xi}$ in 
$\opn{E}_{X, \xi}(M_1, M_2)$ 
that satisfy the conditions below. 
\begin{itemize}
\rmitem{i} The operator $\phi$ belongs to 
$\opn{E}_{X, \xi}(M_1, M_2)_{1, 1}$ if there 
exists some $L_2 \in \lb \opn{Lat}_{X, \xi}(M_2)$ such that 
$\phi(M_{1, \xi}) \subset L_{2, \d_0(\xi)}$. 
\rmitem{ii}  The operator $\phi$ belongs to 
$\opn{E}_{X, \xi}(M_1, M_2)_{1, 2}$
if there exists some $L_1 \in \lb \opn{Lat}_{X, \xi}(M_1)$ such that 
$\phi(L_{1, \d_0(\xi)}) = 0$. 
\rmitem{iii} This only refers to $n \geq 2$. 
For  $i \in \{ 2, \dots, n \}$ and 
$j \in \{ 1, 2 \}$, the operator $\phi$ belongs to 
$\opn{E}_{X, \xi}(M_1, M_2)_{i, j}$
if for any $\phi$-refinement
$(L'_1, L'_2) \prec_{\phi} (L_1, L_2)$ in $\opn{Lat}_{X, \xi}(M_1, M_2)$,
the induced homomorphism 
\[ \bar{\phi} : (L_1 / L'_1)_{\d_0(\xi)} \to (L'_2 / L_2)_{\d_0(\xi)}  \]
belongs to 
\[  \opn{E}_{X, \d_0(\xi)} (L_1 / L'_1, L'_2 / L_2)_{i - 1, j}  . \] 
\end{itemize}
\end{dfn}

\begin{dfn} \label{dfn:104}
Let $\xi = (x_0, \ldots, x_n)$ be a saturated chain of points in $X$,
of length $n \geq 1$, such that $x_n$ is a closed point.
Consider the residue field $K := \kk(x_0)$. 
\begin{enumerate}
\item  We define  
$\opn{E}_{X, \xi}(K) := \opn{E}_{X, \xi}(K, K)$.
\item If $n \geq 1$ we define 
$\opn{E}_{X, \xi}(K)_{i, j} := \opn{E}_{X, \xi}(K, K)_{i, j}$.
\end{enumerate}
\end{dfn}

By definition there are inclusions
\[ \opn{E}_{X, \xi}(K)_{i, j} \subset \opn{E}_{X, \xi}(K) \subset 
\opn{End}_{\k}(K_{\xi}) . \]

\begin{thm}[{\cite{Be1}, \cite[Proposition 13]{Br2}}] \label{thm:104}
Assume $\k$ is a perfect field. The data 
\[ \bigl( \mrm{E}_{X, \xi}(K), \{ \mrm{E}_{X, \xi}(K)_{i, j} \} \bigr) \]
from Definition \tup{\ref{dfn:104}}
is an $n$-dimensional cubically decomposed ring of operators on $K_{\xi}$,
in the sense of Definition \tup{\ref{dfn:85}}.
\end{thm}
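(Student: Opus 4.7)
The plan is to reduce Theorem \ref{thm:104} to the analytic version already established in Theorem \ref{thm:103}(1), by matching the geometric cubically decomposed structure on $K_\xi$ with the product of analytic cubically decomposed structures on the TLF factors. By Theorem \ref{thm:50}(1), the Beilinson completion decomposes as a finite product $K_\xi = \prod_{l=1}^r K_l$, where each $K_l$ is an $n$-dimensional TLF over $\k$. The locality condition (2.iii) of Definition \ref{dfn:1} says exactly that any $\phi \in \mrm{E}_{X, \xi}(K)$ splits as $\phi = \prod_l \phi_l$ with $\phi_l \in \opn{End}_\k(K_l)$. So the first step is to prove, by induction on $n$, the comparison
\[ \mrm{E}_{X, \xi}(K) \;=\; \prod_{l=1}^r \mrm{E}(K_l)
   \quad\text{and}\quad
   \mrm{E}_{X, \xi}(K)_{i, j} \;=\; \prod_{l=1}^r \mrm{E}(K_l)_{i, j} \]
as subsets of $\opn{End}_\k(K_\xi) = \prod_l \opn{End}_\k(K_l)$.

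The bridge between the two definitions is the comparison of lattices. By Theorem \ref{thm:50}(3), for each $l$ the DVR $\OO_1(K_l)$ is the integral closure of $\OO_{X, \d_0(\xi)}$ in $K_l$. Starting from a geometric $\OO_{X, x_1}$-lattice $L \in \opn{Lat}_{X, \xi}(K)$, its completion $L_{\d_0(\xi)}$ sits inside $K_\xi = \prod_l K_l$ and decomposes as $\prod_l L_l$ with each $L_l$ a finite $\OO_1(K_l)$-submodule generating $K_l$, i.e.\ an analytic $\OO_1(K_l)$-lattice in the sense of Section~\ref{sec:lat-BT}. Conversely, every product of analytic lattices $\prod_l L_l$ can, up to $\phi$-refinement, be dominated by one arising this way from a geometric lattice. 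Granting this lattice dictionary, the recursive conditions of Definitions \ref{dfn:1} and \ref{dfn:3} match exactly the recursive conditions of Definitions \ref{dfn:5} and \ref{dfn:73} applied to each factor $K_l$, since the induced operator on a subquotient $L_1/L_1'$ completes at $\d_0(\xi)$ to the corresponding operator on the next-residue-field factorization, which by the inductive hypothesis and Theorem~\ref{thm:50} applied to $\d_0(\xi)$ agrees with the TLF-theoretic one.

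Once the product comparison is established, the two conditions of Definition~\ref{dfn:85} follow immediately from Theorem~\ref{thm:103}(1). For condition (i), if $E^{(l)} := \mrm{E}(K_l) = E^{(l)}_{i,1} + E^{(l)}_{i,2}$ for each $l$, then in the product ring
\[ \mrm{E}_{X, \xi}(K) \;=\; \prod_{l} E^{(l)} \;=\; \prod_l E^{(l)}_{i,1} + \prod_l E^{(l)}_{i,2} \;=\; \mrm{E}_{X, \xi}(K)_{i, 1} + \mrm{E}_{X, \xi}(K)_{i, 2}. \]
For condition (ii), an element in the total intersection decomposes as $\phi = \prod_l \phi_l$ with each $\phi_l \in \bigcap_{i,j} \mrm{E}(K_l)_{i,j}$, hence each $\phi_l$ is finite potent by Theorem~\ref{thm:103}(1); taking a common power $m$ that works for all $r$ factors shows $\phi^m = \prod_l \phi_l^m$ has finite rank.

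The main obstacle is the lattice dictionary in the second paragraph, which requires a careful simultaneous induction together with the comparison of inductive steps. Concretely, one must show that the operation ``take an $\OO_{X, x_1}$-lattice $L$ in a finite length $\OO_{X, x_0}$-module $M$, complete along $\d_0(\xi)$, and track $L_{\d_0(\xi)}$ as a lattice inside the completion'' is compatible, under the decomposition $M_\xi = \prod_l M_l$, with the TLF lattice notion over each $\OO_1(K_l)$; and conversely that enough analytic lattices arise from geometric ones that the refinement condition (2.i) of Definition~\ref{dfn:1} is equivalent to its analytic counterpart. This combines Theorem~\ref{thm:50}(3) with the exactness and product compatibility of Beilinson completion along $\d_0(\xi)$; once it is in place the rest of the argument is essentially a formal transcription of the proofs of Lemmas \ref{lem:65}, \ref{lem:66} and \ref{lem:100} to the geometric side.
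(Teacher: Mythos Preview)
The paper does not give its own proof of this theorem; it is quoted from \cite{Be1} and \cite[Proposition 13]{Br2}, where the argument proceeds directly on the geometric side, by the same inductive skeleton as Lemmas \ref{lem:65}, \ref{lem:66} and \ref{lem:100} but using $\OO_{X,x_1}$-lattices throughout, with no reference to the analytic ring $\mrm{E}(K_l)$.

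Your route is genuinely different, and in fact it is an attempt to prove Conjecture \ref{conj:204}: the product comparison
\[ \mrm{E}_{X,\xi}(K) = \prod_l \mrm{E}(K_l),
\qquad
\mrm{E}_{X,\xi}(K)_{i,j} = \prod_l \mrm{E}(K_l)_{i,j} \]
is precisely what that conjecture asserts, and the paper explicitly leaves it open. The ``lattice dictionary'' you identify as the main obstacle is the real content of the conjecture, and it is subtler than your sketch suggests. Cofinality of geometric lattices among analytic ones is plausible, but the recursive step is where the difficulty lies: on the geometric side the subquotient $L_1/L_1'$ is a finite-length $\OO_{X,x_1}$-module, completed along $\d_0(\xi)$ to a module over $\kk(x_1)_{\d_0(\xi)}$; on the analytic side the same subquotient is viewed as a $\kk_1(K_l)$-module via a chosen lifting $\si_1$. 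Identifying these two module structures and their further lattice filtrations is exactly the kind of comparison that requires the precise-lifting machinery of Section~\ref{sec:CDOs} (cf.\ Example~\ref{exa:210}) together with a careful analysis of how the local factors of $\kk(x_1)_{\d_0(\xi)}$ match up with the first residue fields $\kk_1(K_l)$; your sketch does not address this. So as written your proposal assumes the hard open part of the paper; for a self-contained proof of Theorem~\ref{thm:104} you should instead imitate the proofs of Lemmas \ref{lem:65}--\ref{lem:100} directly in the geometric setting, which is what \cite{Br2} does.
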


Conjecture \ref{conj:204} asserts that this $n$-dimensional cubically 
decomposed 
ring of operators on $K_{\xi}$ coincides with the cubically decomposed 
ring of operators
\[  \bigl( \mrm{E}(K_{\xi}), \{ \mrm{E}(K_{\xi})_{i, j} \} \bigr) \]
from Definition \ref{dfn:66}, modified as in formula (\ref{eqn:100}).

\begin{rem} \label{rem:200}
Consider an integral finite type $\k$-scheme $X$ of dimension $n$. 
Let $\xi = (x_0, \ldots, x_n)$ be a maximal chain in $X$; so $x_0$ is the 
generic point. Write $K := \kk(X) = \kk(x_0)$. According to Theorem 
\ref{thm:104} there is a cubically decomposed ring of operators 
$\opn{E}_{X, \xi}(K)$ on $K_{\xi}$. Applying the abstract BT residue of 
formula (\ref{eqn:88}) with $E := \opn{E}_{X, \xi}(K)$, we obtain the functional
\[ \opn{Res}^{\mrm{BT}}_{X, \xi} :=
\opn{Res}^{\mrm{BT}}_{K_{\xi} / \k; E} : \Om^{n}_{K / \k} \to \k . \]
This is the residue functional that Beilinson had in \cite{Be1}. 

Beilinson claimed in \cite{Be1} that the functionals 
$\opn{Res}^{\mrm{BT}}_{X, \xi}$ satisfy several geometric properties. Most 
notably, when $X$ is a proper scheme, then for any $\al \in \Om^{n}_{K / \k}$
there is a global residue formula:
\begin{equation} \label{eqn:900}
\sum_{\xi} \opn{Res}^{\mrm{BT}}_{X, \xi}(\al) = 0 .
\end{equation}
The sum is on all maximal chains $\xi$ in $X$. 

Conjectures \ref{conj:200} and \ref{conj:204}, combined with our results in 
\cite{Ye1} regarding the residue functionals 
$\opn{Res}^{\mrm{TLF}}_{K_{\xi} / \k}$, imply most of the geometric properties 
of the residue functionals $\opn{Res}^{\mrm{BT}}_{X, \xi}$ stated in \cite{Be1}, 
including formula (\ref{eqn:900}). 

Conversely, as noted by Beilinson (private communication), a direct proof of 
the geometric properties of the functionals 
$\opn{Res}^{\mrm{BT}}_{X, \xi}$ (perhaps by generalizing Tate's 
original idea to higher dimensions), together with Conjecture 
\ref{conj:204}, would imply Conjecture \ref{conj:200}.
\end{rem}



\begin{thebibliography}{EGA-IV}

\bibitem[Be1]{Be1} A.A. Beilinson, Residues and adeles, 
Funkt.\ Anal.\ Pril.\ {\bf 14}(1) (1980), 44-45; English trans.\
in Func.\ Anal.\ Appl.\ {\bf 14}(1) (1980), 34-35.

\bibitem[Be2]{Be2} A. Beilinson, Remarks on topological algebras,
Moscow Mathematical Journal, Volume 8, Number 1, January-March 2008, Pages 1-20.

\bibitem[BGW]{BGW} O. Braunling, M. Groechenig and J. Wolfson,
Tate Objects in Exact Categories, arXiv:1402.4969.

\bibitem[Br1]{Br1} O. Braunling,
Adele residue symbol and Tate's central extension for multiloop Lie algebras,
to appear in Algebra and Number Theory,
eprint arXiv:1206.2025v3.

\bibitem[Br2]{Br2} O. Braunling,
On the local residue symbol in the style of Tate and Beilinson,
eprint arXiv:1403.8142v2.

\bibitem[EGA~IV]{EGA-IV} A. Grothendieck and J. Dieudonn\'e, 
	``\'El\'ements de G\'eometrie Alg\'ebrique IV'', part four,
Publ.\ Math.\ IHES 32 (1967).

\bibitem[HLF]{HLF} ``Invitation to Higher Local Fields'', 
I. Fesenko and M. Kurihara (eds.), Geometry and Topology Monographs
vol.\ {\bf 3}, Warwick 2000.

\bibitem[Hu]{Hu} A. Huber, 
On the Parshin-Beilinson Adeles for Schemes, 
Abh.\ Math.\ Sem.\ Univ.\ Hamburg {\bf 61} (1991), 249-273.

\bibitem[HY]{HY} R. H\"ubl and A. Yekutieli,
Adelic Chern Forms And Applications,
American Journal of Mathematics {\bf 121} (1999), 797-839.

\bibitem[Ka]{Ka} K. Kato, 
A generalization of local class field theory by using K-groups I, 
J. Fac.\ Sci.\ Univ.\ Tokyo Sec.\ IA {\bf 26} No.\ 2 (1979), 303-376.

\bibitem[Lo]{Lo} V.G. Lomadze, On residues in algebraic geometry, 
Izv.\ Akad.\ Nauk SSSR Ser.\ Mat.\ {\bf 45} (1981), 1258-1287; English trans.\ 
in Math.\ USSR Izv.\ {\bf 19} (1982) no.\ 3, 495-520.

\bibitem[Ma]{Ma} H. Matsumura,
``Commutative Ring Theory'', 
Cambridge Univ.\ Press, 1986. 

\bibitem[Mo]{Mo} M. Morrow,
An introduction to higher dimensional local fields and adeles,
online at \url{http://www.math.uni-bonn.de/people/morrow}
(see arXiv:1204.0586v2 for an older version).

\bibitem[Os1]{Os1} D.V. Osipov, 
Central extensions and reciprocity laws on algebraic surfaces,
Mat.\ Sb.\ {\bf 196} (2005), no.\ 10, 111-136; English translation in Sb.\ 
Math.\ {\bf 196} (2005), no.\ 9-10, 1503-1527; e-print 
arXiv:math/0501155.

\bibitem[Os2]{Os2} D.V. Osipov, 
Adeles on n-dimensional schemes and categories $C_n$,
Internat.\ J. Math.\ {\bf 18} (2007), no.\ 3, 269-279;  e-print 
arXiv:math/0509189.

\bibitem[Pa1]{Pa1} A. N. Parshin, 
On the Arithmetic of Two-Dimensional Schemes. I. Distributions and
Residues,
Izv.\ Akad.\ Nauk SSSR Ser.\ Mat.\ Tom 40 (1976), No.\ 4.
English translation Math.\ USSR Izvestija Vol.\ 10 (1976), No.\ 4.

\bibitem[Pa2]{Pa2} A. N. Parshin, 
Abelian coverings of arithmetical schemes, 
Dokl.\ Akad.\ Nauk.\ SSSR {\bf 243} (1978), 855-858.
English translation in Soviet Math.\ Dokl.\ {\bf 19} (1978). 

\bibitem[Pa3]{Pa3} A. N. Parshin, 
Chern classes, adeles and L-functions,
J. Reine Angew.\ Math.\ {\bf 341} (1983), 174-192.

\bibitem[Se]{Se} J.P. Serre, 
``Algebraic Groups and Class Fields'', 
Springer-Verlag, New York, 1988.

\bibitem[Ta]{Ta} J. Tate, 
Residues of differentials on curves, 
Ann.\ Sci.\ de l'E.N.S. serie 4, {\bf 1} (1968), 149-159.

\bibitem[Ye1]{Ye1} A. Yekutieli,
``An Explicit Construction of the Grothendieck Residue Complex'',
Ast\'erisque {\bf 208} (1992).

\bibitem[Ye2]{Ye2} A. Yekutieli,
Traces and differential operators over Beilinson completion algebras,
Compositio Mathematica {\bf 99}, no.\ 1 (1995), 59-97.

\end{thebibliography}
\end{document}